\def \qed {\hfill \vrule height6pt width 6pt depth 0pt}
\def\textmatrix#1&#2\\#3&#4\\{\bigl({#1 \atop #3}\ {#2 \atop #4}\bigr)}
\def\dispmatrix#1&#2\\#3&#4\\{\left({#1 \atop #3}\ {#2 \atop #4}\right)}
\newcommand{\beg}{\begin{equation}}
	\newcommand{\eeg}{\end{equation}}
\newcommand{\ben}{\begin{eqnarray*}}
	\newcommand{\een}{\end{eqnarray*}}
\newtheorem{thm}{Theorem}[section]
\newtheorem{cor}[thm]{Corollary}
\newtheorem{lem}[thm]{Lemma}
\newtheorem{prop}[thm]{Proposition}
\numberwithin{equation}{section} \theoremstyle{definition}
\newtheorem{defn}[thm]{Definition}
\newtheorem{rem}[thm]{Remark}
\newtheorem{eg}[thm]{Example}
\newcommand{\E}{\mathbb{E}}
\newcommand{\EC}{\overline{\mathbb{E}}}
\newcommand{\Q}{\mathbb{H}}
\newcommand{\C}{\mathbb{C}}
\newcommand{\B}{\mathbb{B}}
\newcommand{\G}{\mathbb{G}}
\newcommand{\Gg}{\mathbb{G}_2}
\newcommand{\D}{\mathbb{D}}
\newcommand{\T}{\mathbb{T}}
\newcommand{\N}{\mathbb{N}}
\newcommand{\Pe}{\mathbb{P}}
\newcommand{\HS}{\mathcal{H}}
\newcommand{\CQ}{\overline{\mathbb{H}}}
\newcommand{\PC}{\overline{\mathbb{P}}}
\newcommand{\BC}{\overline{\mathbb{B}}_2}
\newcommand{\DC}{\overline{\mathbb{D}}}
\newcommand{\UT}{\underline{T}}
\title[Operators associated with the hexablock]{Operators associated with a domain in $\mathbb C^4$ and applications}
\author[Pal and Tomar]{Sourav Pal and Nitin Tomar}
\address[Sourav Pal]{Mathematics Department, Indian Institute of Technology Bombay,
	Powai, Mumbai - 400076, India.} \email{souravmaths@gmail.com, sourav@math.iitb.ac.in}
\address[Nitin Tomar]{Mathematics Department, Indian Institute of Technology Bombay, Powai, Mumbai-400076, India.} \email{tomarnitin414@gmail.com}		
\keywords{Hexablock, $\mathbb{H}$-contraction, $\mathbb H$-unitary, $\mathbb H$-isometry, $\mathbb P$-contraction, $\mathbb{E}$-contraction, $\Gamma$-contraction, $\mathbb B_2$-contraction, rational dilation, Wold decomposition, canonical decomposition}	
\subjclass[2020]{47A13, 47A25, 47A20, 47A45}
\begin{document}
	
	\maketitle

	\begin{abstract}
		
		We introduce operator theory on the \textit{hexablock}, a domain in $\mathbb C^4$ defined by
		\[
		\mathbb H= \left\{(a, x_1, x_2, x_3) \in \mathbb{C} \times \mathbb{E} : \left|\frac{a\sqrt{(1-|z_1|^2)(1-|z_2|^2)}}{1-x_1z_1-x_2z_2+x_3z_1z_2}\right|<1 \ \text{whenever} \ |z_1|, |z_2|<1 \right\},
		\]
where $\mathbb E \subset \mathbb C^3$ is a domain named \textit{tetrablock} which is given by
		\[
		\mathbb E=\left\{(x_1, x_2, x_3) \in \mathbb C^3 \ : \ 1-x_1z_1-x_2z_2+x_3z_1z_2 \ne 0 \ \text{whenever} \ |z_1| \leq 1, |z_2| \leq 1 \right\}.
		\]	
A commuting quadruple of Hilbert space operators $(A, X_1, X_2, X_3)$ is said to be an \textit{$\mathbb H$-contraction} if the closed hexablock $\overline{\mathbb H}$ is a spectral set for $(A, X_1, X_2, X_3)$. A commuting quadruple $(A, X_1, X_2, X_3)$ of normal operators is said to be an \textit{$\mathbb H$-unitary} if the joint spectrum $\sigma_T(A, X_1, X_2, X_3)$ of $(A, X_1, X_2, X_3)$ is contained in the distinguished boundary $b\mathbb H$ of $\overline{\mathbb H}$. Also, $(A, X_1, X_2, X_3)$ is called an \textit{$\mathbb H$-isometry} if it is the restriction of an $\mathbb H$-unitary $(\widehat{A}, \widehat{X}_1, \widehat{X}_2, \widehat{X}_3)$ to a joint invariant subspace for $\widehat{A}, \widehat{X}_1, \widehat{X}_2, \widehat{X}_3$. In this article, we develop a theory of $\Q$-contractions and realize the operator theory of the following five domains within the unifying framework of $\Q$-contractions: Euclidean biball $\mathbb B_2$, bidisc $\mathbb D^2$,  symmetrized bidisc $\mathbb G_2$, tetrablock $\mathbb E$ and pentablock $\mathbb P$. Several characterizations for the $\mathbb H$-unitaries and $\mathbb H$-isometries are provided. We prove that every $\mathbb H$-isometry admits a Wold type decomposition into an $\mathbb H$-unitary and a pure $\mathbb H$-isometry. A conditional dilation is obtained for a $\mathbb H$-contraction $(\widehat{A}, \widehat{X}_1, \widehat{X}_2, \widehat{X}_3)$ with $X_3$ being a $C_{.0}$ contraction, i.e., ${X_3^*}^n \rightarrow 0$ strongly as $n \rightarrow \infty$. Using the operator model of a $C_{.0}$ contraction, we construct a functional model for such $\mathbb H$-contractions in terms of commuting Toeplitz operators. Moreover, we find an equivalent formulation of the rational dilation problem for biball, tetrablock and pentablock via rational dilation on the hexablock. Finally, we show that every $\mathbb H$-contraction orthogonally decomposes into an $\mathbb H$-unitary and a completely non-unitary $\mathbb H$-contraction, which is analogous to canonical decomposition of a contraction. 
	\end{abstract}

	\section{Introduction} \label{sec_intro}
	
	
	\noindent Throughout the paper, all operators are bounded linear maps acting on complex Hilbert spaces. For a Hilbert space $\HS$, we denote by $\mathcal{B}(\HS)$ the algebra of operators on $\HS$. The symbol $\C$ stands for the set of complex numbers. We denote by $\D$ and $\T$ the unit disk and the unit circle, respectively with center at the origin in $\C$. A contraction is an operator with norm at most $1$. For a contraction $T$, we denote by $D_T$ and $\mathcal{D}_T$, the defect operator $(I-T^*T)^{1\slash 2}$ and the defect space $\overline{Ran} \, D_T$ of $T$, respectively. For an operator $T$, $|T|$ stands for $(T^*T)^{\frac{1}{2}}$ and $\omega(T)$ denotes the numerical radius of $T$. The commutator of operators $A,B$ is given by $[A, B]=AB-BA$. For a commuting tuple of operators $\underline{T}=(T_1, \dotsc, T_n)$ acting on a Hilbert space $\HS$, we denote by $\sigma_T(T_1, \dotsc, T_n)$ the polynomial joint spectrum (or, the joint spectrum) of $(T_1, \dotsc, T_n)$ relative to the closed algebra $\mathcal{A}$ of $\mathcal{B}(\mathcal{H})$ generated by $T_1, \dotsc, T_n$ and the identity operator $I$ on $\HS$, i.e., 
	\[
	\sigma_T(T_1, \dotsc, T_n)=\{(\lambda_1, \dotsc, \lambda_n) \in \C^n : I \notin (T_1-\lambda_1)\mathcal{A}+\dotsc+(T_n-\lambda_n)\mathcal{A} \}.
	\]
In this article, we introduce operator theory on the hexablock $\mathbb{H}$, a domain in $\mathbb{C}^4$ that naturally arises from a special case of $\mu$-synthesis problem in control theory. The \emph{hexablock} is defined by
\begin{equation}\label{eqn_101}
		\mathbb H= \left\{(a, x_1, x_2, x_3) \in \C \times \E :  \sup_{z_1, z_2 \in \D}\left|\frac{a\sqrt{(1-|z_1|^2)(1-|z_2|^2)}}{1-x_1z_1-x_2z_2+x_3z_1z_2}\right|<1 \right\},
	\end{equation}
	where $\E$ is the tetrablock, a domain in $\mathbb C^3$ given by 
	\begin{equation}\label{eqn:Intro-new-01}
	\mathbb E=\{(x_1, x_2, x_3) \in \mathbb C^3 \ : \ 1-x_1z_1-x_2z_2+x_3z_1z_2 \ne 0 \ \
	\text{whenever} \ \
	|z_1| \leq 1, |z_2| \leq 1 \}.
	\end{equation}	
The aim of this article is twofold. We develop operator theory on $\Q$ that is analogous to the famous Nagy-Foias theory for Hilbert space contractions. Also, we realize operator theory of five different domains within a unifying framework of operators associated with the hexablock. These five domains are the following: the Euclidean biball $\B_2$, the bidisc $\mathbb{D}^2$, the symmetrized bidisc $\mathbb G_2$, the tetrablock $\E$ and the pentablock $\Pe$, where
\begin{equation}\label{eqn:Intro-new-02}
	\mathbb G_2  = \{ (z_1+z_2,z_1z_2) : \; |z_1|, |z_2|<1 \} \ \  \text{and} \ \ \mathbb P  = \left\{(a, s, p) \in \C \times \G_2 \ : \ \sup_{z \in \D}\left|\frac{a(1-|z|^2)}{1-sz+pz^2}\right|<1 \right\}.
\end{equation}
The symmetrized bidisc $\mathbb G_2$, the tetrablock $\E$ and the pentablock $\E$ arise naturally in connection with special cases of $\mu$-synthesis. The $\mu$-\textit{synthesis} problem is a part of robust control theory (e.g., see \cite{Doyle, Francis}) of systems consisting of interconnected devices whose outputs are linearly dependent on the inputs. Given a linear subspace $E$ of $M_n(\mathbb C)$, the space of all $n \times n$
complex matrices, the functional
\begin{equation} \label{eqn:NEW-01}
\mu_E(A):= (\text{inf} \{ \|X \|: X\in E \text{ and } (I-AX)
\text{ is singular } \})^{-1} \quad \text{($A\in  M_n(\mathbb C)$)}
\end{equation}
is called the \textit{structured singular value} and the linear subspace $E$ is referred to as the \textit{structure}. Given distinct points $\alpha_1, \dots , \alpha_d$ in $\D$ and matrices $B_1, \dots, B_d \in M_n(\C)$, the goal of $\mu$-synthesis is to find a holomorphic map $F:\mathbb D \rightarrow M_n(\mathbb C)$ subject to finitely many interpolation conditions such that $\mu_E(F(\lambda))<1$ for all $\lambda \in \mathbb D$. If $E= M_n(\mathbb C)$, then $\mu_E (A)=\|A\|$, while if $E$ is the space of all scalar multiples of the identity matrix, then $\mu_E(A)$ is the spectral radius $r(A)$. For any linear subspace $E$ of
$M_n(\mathbb C)$ that contains the identity matrix, $r(A)\leq \mu_E(A) \leq \|A\|$. The $\mu$-synthesis with respect to the linear subspace $E \subset M_2(\C)$ consisting of scalar multiples of the identity matrix gives rise to the symmetrized bidisc $\mathbb{G}_2$ (see \cite{AglerII}), whereas the tetrablock $\mathbb{E}$ was induced by the subspace of $2 \times 2$ diagonal matrices, e.g., see \cite{Abouhajar}. Also, the pentablock $\mathbb{P}$ is originated in the $\mu$-synthesis of the $\mu_E$-unit ball, where $E=\left\{ \begin{pmatrix}
a & b \\
0 & a
\end{pmatrix} \, : \, a, b \in \C
\right\}$,
e.g., see \cite{Agler}. We now show the readers by an example how the $\mu$-synthesis problem with respect to a linear subspace $E$ of $M_n(\C)$ gives rise to a domain. Suppose $E=\left\{\begin{pmatrix}
z_1 & 0 \\
0 & z_2
\end{pmatrix} : z_1, z_2 \in \mathbb{C} \right\}$. It was shown in \cite{Abouhajar} how the $\mu$-synthesis with respect to this subspace induces the domain \textit{tetrablock}. Indeed, for $X=\begin{pmatrix}
z_1 & 0\\
0& z_2
\end{pmatrix} \in E$ and $A=(a_{ij})_{i,j=1}^2 \in M_2(\mathbb{C})$, we have that
$\|X\|=\max\{|z_1|, |z_2|\}$ and $\det(I-AX)=1-a_{11}z_1-a_{22}z_2+\det(A)z_1z_2$ and it follows from the definition of $\mu_E$ (see Equation-(\ref{eqn:NEW-01})) that
\begin{align*}
\mu_E(A)<1
& \iff \|X\|>1 \ \text{for all} \ X \in E \ \text{with} \ \det(I-AX)=0\\
& \iff \det(I-AX)\ne 0 \ \text{for all} \ X \in E \ \text{with} \ \|X\| \leq 1\\
& \iff 1-a_{11}z_1-a_{22}z_2+\det(A)z_1z_2\ne 0 \ \text{for all} \ z_1, z_2 \in \overline{\mathbb{D}}\\
& \iff (a_{11}, a_{22}, \det(A)) \in \mathbb{E}.
\end{align*}
Thus, $A=(a_{ij})_{i,j=1}^2$ is in the $\mu_E$-unit ball if and only if the point $(a_{11}, a_{22}, \det(A))$ belongs to the tetrablock $\mathbb E$. The authors of this article and Biswas discover in \cite{Biswas} the domain hexablock from the $\mu$-synthesis subject to the linear subspace of all $2 \times 2$ upper triangular matrices. In the same article, they identifies all three previous domains (that are $\mathbb{G}_2$, $\mathbb{E}$ and $\mathbb{P}$) as special cases of the hexablock $\Q$. These three domains have received considerable attention in recent years from complex-geometric, function-theoretic, and operator-theoretic perspectives; 
see, for example, \cite{AglerII, AglerIV, AglerVI, AglerVII, Bhattacharyya} for $\mathbb{G}_2$, \cite{Abouhajar, Alsalhi, Bhattacharyya-01, Pal2016, Pal-IV} for $\mathbb{E}$, and \cite{Agler, JindalII, Kosinski, PalN, Su, SuII} for $\mathbb{P}$, and references therein. For the hexablock, the structure $E$ is chosen to be the linear subspace of all $2 \times 2$ upper triangular matrices. With this choice of $E$, the authors of \cite{Biswas} introduced the sets
	\[
	\mathbb{H}_{\mu}=\{\pi(A) : \mu_E(A)<1\}\quad \text{and} \quad \mathbb{H}_{N}=\{\pi(A) : \|A\|<1\},
	\]
	where $\pi(A)=(a_{21}, a_{11}, a_{22}, \det(A))$ for $A=[a_{ij}] \in M_2(\C)$. However, it turns out that  $\mathbb{H}_{\mu} \ne \mathbb{H}_N$ and neither of these sets is a domain in $\C^4$ unlike the previous $\mu$-synthesis cases that produced the domains $\Gg, \E$ and $\Pe$. This deviation in the $\mu$-synthesis problem unlike the previously studied ones makes the theory of hexablock a little complicated but interesting from the function-theoretic point of view as discussed in detail in Chapters 1 and 2 of \cite{Biswas}. In this context, a fundamental problem is to extract a domain in $\C^4$ that contains $\mathbb{H}_{N}$ as well as $\mathbb{H}_{\mu}$. The domain that arises in this connection is hexablock, which is defined in \eqref{eqn_101}. In fact, it is proved in \cite{Biswas} that $\mathbb{H}$ is same as the interior of the closure of $\mathbb{H}_\mu$, i.e., $\mathbb{H}=\text{int}(\overline{\mathbb{H}}_{\mu})$. Moreover, $\mathbb{H}=\text{int}( \widehat{\overline{\mathbb{H}}_N})$, the interior of polynomial convex hull of the closure of $\mathbb{H}_N$. This establishes a strong connection among the sets $\mathbb{H}, \mathbb{H}_\mu$ and  $\mathbb{H}_N$. Biswas et al. studied further the complex-theoretic and function-theoretic properties  of the hexablock in \cite{Biswas}. In this article, our primary object of study is a commuting quadruple of Hilbert space operators having the closed hexablock as a spectral set.	
	
\begin{defn}\label{defn_contr}
	Let $X \subseteq \C^n$ be a polynomially convex compact set. Then $X$ is called a \textit{spectral set} for a commuting tuple of operators $\underline{T}=(T_1, \dotsc, T_n)$ if von Neumann's inequality holds on $X$ for all holomorphic polynomials in $n$-variables, i.e., for every $p \in \C[z_1, \dotsc, z_n]$,
	\begin{equation*}
		\|p(T_1, \dotsc, T_n) \| \leq \sup \{|p(z_1, \dotsc, z_n)| : (z_1, \dotsc, z_n)\in X\} = \|p\|_{\infty, \, X \,}.
	\end{equation*}	
	Moreover, $X$ is said to be a \textit{complete spectral set} for $(T_1, \dotsc, T_n)$ if  
	\begin{equation}\label{eqn_vN_matrix}
	\|f(T_1, \dotsc, T_n) \| \leq \sup \{\|f(z_1, \dotsc, z_n)\| : (z_1, \dotsc, z_n)\in X\}
	\end{equation}
	holds for every matricial polynomial $f=[f_{ij}]$, where each $f_{ij} \in \C[z_1, \dotsc, z_n]$. Let $\Omega \subseteq \mathbb{C}^n$ be a bounded domain such that $\overline{\Omega}$ is polynomially convex. A commuting $n$-tuple of operators $\underline{T}$ is said to be an \textit{$\Omega$-contraction} (or, \textit{$\overline{\Omega}$-contraction}) if $\overline{\Omega}$ is a spectral set for $\underline{T}$.
\end{defn}
	
	Unitaries, isometries and co-isometries are special classes of contractions. A unitary is a normal operator having its spectrum on the unit circle $\T$. An isometry is the restriction of a unitary to an invariant subspace and a co-isometry is the adjoint of an isometry. In an analogous manner, we define unitary, isometry and co-isometry associated with the hexablock. To do so, we briefly describe the definition of distinguished boundary. For a bounded domain $\Omega  \subset \C^n$, the distinguished boundary of $\overline{\Omega}$ is the smallest closed subset $b\Omega$ (or, $b\overline{\Omega})$ of $\overline{\Omega}$ such that every function that is continuous on $\overline{\Omega}$ and analytic in $\Omega$ attains its maximum modulus on $b\Omega$.
	
	\begin{defn}\label{defn:H-unitary}
		A commuting quadruple of operators $(A, X_1, X_2, X_3)$ on a Hilbert space $\HS$ is called 
		\begin{itemize}
			
			\item[(i)] an $\mathbb{H}$-\textit{unitary} if $A, X_1, X_2, X_3$ are normal operators and the polynomial joint spectrum is contained in the distinguished boundary of the hexablock;
						
			\item[(ii)] an $\mathbb{H}$-\textit{isometry} if there is a Hilbert space $\mathcal K \supseteq \HS$ and an $\mathbb{H}$-unitary $(\widehat A, \widehat X_1, \widehat X_1, \widehat X_3)$ on $\mathcal K$ such that $\HS$ is a joint invariant subspace for $\widehat A, \widehat X_1, \widehat X_2, \widehat X_3$ and 
			\[
			(A, X_1, X_2, X_3)=(\widehat A|_{\HS}, \widehat X_1|_{\HS}, \widehat X_2|_{\HS}, \widehat X_3|_{\HS});
			\]
			
			\item[(iii)] an $\mathbb{H}$-\textit{co-isometry} if $(A^*, X_1^*, X_2^*, X_3^*)$ is an $\mathbb{H}$-isometry. 
		\end{itemize}
	Similarly, one can define unitary, isometry and co-isometry for the classes of $\Pe$-contractions, $\B_2$-contractions, $\E$-contractions and $\Gamma$-contractions. Moreover, an isometry (on $\HS$) associated with a domain is called \textit{pure} if there is no nonzero proper joint reducing subspace of the isometry on which it acts like a unitary associated with the domain.
	\end{defn}
Note that one can define unitary with respect to a bounded domain whose closure is polynomially convex (e.g., $\Q$-unitary as in Definition \ref{defn:H-unitary} for the domain hexablock) by considering the Taylor joint spectrum instead of the polynomial joint spectrum. These two notions of joint spectrum coincide in the context of this paper as we deal here with domains having polynomially convex closures, such as pentablock $\Pe$, Euclidean unit ball $\B_2$ in $\C^2$, tetrablock $\E$ and symmetrized bidisc $\G_2$. Thus, the delicate issues surrounding various notions of joint spectrum are not relevant to this paper. The starting point of the present work is the following result, which highlights the connections of the hexablock with the biball, the tetrablock, the symmetrized bidisc, the bidisc and the pentablock.

\begin{thm}[\cite{Biswas}, Chapter 10]\label{thm_connect}
	Let $(a, x_1, x_2, x_3, s, p) \in \C^6$. Then the following holds:
	\begin{enumerate}
		\item $(a, x_1) \in \BC$ if and only if $(a, x_1, 0, 0) \in \CQ$.
	  \item $(x_1, x_2, x_3) \in \EC$ if and only if $(0, x_1, x_2, x_3) \in \CQ$.
		\item $(a, s, p) \in \PC$ if and only if $(a, s\slash 2, s\slash 2, p) \in \CQ$.
		\item $(s, p) \in \Gamma$ if and only if $(0, s\slash 2, s\slash 2, p) \in \CQ$. 
		\item $(a, x_3) \in \DC^2$ if and only if $(a, 0, 0, x_3) \in \CQ$.
	\end{enumerate}
\end{thm}
In Section \ref{sec_03} of this paper, we prove an operator theoretic analogue of Theorem \ref{thm_connect}. In particular, we show how $\Pe$-contractions, $\B_2$-contractions, $\E$-contractions, $\Gamma$-contractions and even a commuting pair of contractions can be realized within the framework of $\Q$-contractions, thereby placing these classes in a unified setting. Furthermore, it is proved that if $(A, X_1, X_2, X_3)$ is an $\Q$-contraction, then $(A, X_1), (A,X_2)$ are $\B_2$-contractions, $(X_1, X_2, X_3)$ is an $\E$-contraction and $(A, X_1+X_2, X_3)$ is a $\Pe$-contraction. However, a converse to this result does not hold which we establish via a counter example in the same section.

\smallskip 
 
 A variety of characterizations for a $\Q$-unitary are obtained in Section \ref{sec_Q_uni}. In particular, we prove that a commuting quadruple $(N_0, N_1, N_2, N_3)$ is an $\Q$-unitary if and only if $(N_0, N_1)$ is a $\B_2$-unitary and $(N_1, N_2, N_3)$ is an $\E$-unitary. 
 We further show that $\Pe$-unitaries, $\B_2$-unitaries, $\E$-unitaries, $\Gamma$-unitaries and commuting pairs of unitaries can be characterized in terms of $\Q$-unitaries, thereby representing these classes into a common operator-theoretic framework. In Section \ref{sec_Q_iso}, we turn our attention to $\mathbb{H}$-isometries. Various independent characterizations for a $\Q$-isometry are given, some of which are in terms of $\B_2$-isometries and $\E$-isometries. Further, we prove that every $\mathbb{H}$-isometry admits a Wold type decomposition into two orthogonal components, namely, an $\mathbb{H}$-unitary part and a pure $\mathbb{H}$-isometry. This is analogous to the classical Wold decomposition of an isometry into a unitary part and a pure isometry part. Wold type decomposition suggests that the study of $\mathbb{H}$-isometries reduces to the class of pure $\Q$-isometries. In this direction, we recall from \cite{PalI} that every pure $\E$-isometry $(V_1, V_2, V_3)$ admits a model as a commuting triple of Toeplitz operators
 $
 (T_{F_1^*+F_2z},\, T_{F_2^*+F_1z},\, T_z)
 $
 on the vector valued Hardy space $H^2(\mathcal{D}_{V_3^*})$, where $F_1$ and $F_2$ are the fundamental operators of the $\E$-contraction $(V_1^*, V_2^*, V_3^*)$. Motivated by this model, and assuming that $F_1$ and $F_2$ are commuting normal operators satisfying
 $\|F_1^* + F_2 z\|_{\infty, \mathbb{D}} \le 1$,
 we prove in Theorem \ref{thm_fo_normalI} the existence of operators $G_0$ and $G_1$ on $\mathcal{D}_{V_3^*}$ such that the quadruple
 $
 (T_{G_0+G_1z},\, T_{F_1^*+F_2z},\, T_{F_2^*+F_1z},\, T_z)
 $
 is a pure $\Q$-isometry. In particular, we show that the fundamental operators $F_1, F_2$ corresponding to a normal $\E$-contraction $(X_1, X_2, X_3)$ are commuting normal operators and $\|F_1^*+F_2z\|_{\infty, \DC} \leq 1$.
 
 \smallskip  
 
For a bounded domain  $\Omega \subseteq \C^n$  such that $\overline{\Omega}$ is a polynomially convex compact set, let $(A_1, \dotsc, A_n)$ be an $\Omega$-contraction acting on a Hilbert space $\mathcal{H}$. An $\Omega$-isometry (or, $\Omega$-unitary) $(V_1, \dotsc, V_n)$ on a Hilbert space $\mathcal{K} \supseteq \mathcal{H}$ is said to be an \textit{$\Omega$-isometric dilation} (or, $\Omega$-\textit{unitary dilation}) of $(A_1, \dotsc, A_n)$ if 
\[
A_1^{\alpha_1}\dotsc A_n^{\alpha_n}=P_\mathcal{H}V_1^{\alpha_1} \dotsc V_n^{\alpha_n}|_\mathcal{H}
\] 
for all $\alpha_1, \dotsc, \alpha_n \in \N \cup \{0\}$. Moreover, an $\Omega$-isometric dilation is called \textit{minimal} if 
\[
\mathcal{K}=\overline{\text{span}}\left\{V_1^{\alpha_1} \dotsc V_n^{\alpha_n}h : \ h \in \mathcal{H} \ \text{and} \ \alpha_1, \dotsc, \alpha_n \in \mathbb{N} \cup \{0\} \right\}.
\]
The problem of finding an $\Omega$-isometric dilation (or equivalently, $\Omega$-unitary dilation) of an $\Omega$-contraction is simply referred to as the rational dilation problem on $\Omega$. The rational dilation succeeds on the bidisc $\D^2$ (see \cite{Ando, Nagy}) and on the symmetrized bidisc $\G_2$ (see \cite{AglerII, Bhattacharyya}). In contrast, conditional dilation results have been established for the tetrablock in \cite{Bhattacharyya-01, Pal-IV} and for the pentablock in \cite{PalN}. In Section \ref{sec_dilation}, we achieve a conditional dilation for a subclass of $\Q$-contractions whose last component is a pure contraction. Our approach builds upon the model theory of pure contractions and the dilation framework developed in \cite{Pal-IV} for a certain class of $\E$-contractions. Furthermore, we obtain a functional model for this particular class of $\Q$-contractions in terms of commuting Toeplitz operators acting on a vector-valued Hardy space. Also, we provide an equivalent criterion for the rational dilation problem associated with $\B_2, \E$ and $\Pe$ in terms of the rational dilation problem on the hexablock. In the classical setting, every contraction on a Hilbert space admits an orthogonal decomposition into a unitary part and a completely non-unitary part, where a contraction is said to be completely non-unitary if it has no nontrivial reducing subspace on which it acts as a unitary. This is known as the canonical decomposition of a contraction, e.g., see \cite{Nagy}. In Section \ref{sec_06}, we establish an analogous canonical decomposition for $\mathbb{H}$-contractions. The paper is unified by the theme of understanding how $\mathbb{H}$-contractions relate to contractions associated with the domains $\D^2, \B_2, \G_2, \E$ and $\Pe$.
	
\section{Connection of $\mathbb{H}$-contractions with $\B_2$-contractions, $\Gamma$-contractions, $\E$-contractions, $\Pe$-contractions and commuting pair of contractions}\label{sec_03}
	
	\vspace{0.2cm}
	
	\noindent We begin with remembering the definitions of the domains hexablock, tetrablock, pentablock, symmetrized bidisc from Equations (\ref{eqn_101}), (\ref{eqn:Intro-new-01}) and (\ref{eqn:Intro-new-02}). However, we recall that the hexablock is a polynomially convex domain in $\C^4$, given by 
	\begin{align*}
	&\Q=\left\{(a, x_1, x_2, x_3) \in \C \times \E :  \sup_{z_1, z_2 \in \D}|\Psi_{z_1, z_2}(a, x_1, x_2, x_3)|<1 \right\}, \text{where} \\
	& \Psi_{z_1, z_2}(a, x_1, x_2, x_3)=\frac{a\sqrt{(1-|z_1|^2)(1-|z_2|^2)}}{1-x_1z_1-x_2z_2+x_3z_1z_2}  \quad \text{for} \ (z_1, z_2) \in \DC^2 \ \text{and} \ (x_1, x_2, x_3) \in \EC.
	\end{align*}
Furthermore, the closure of $\Q$ (see Theorem 6.3 in \cite{Biswas}) is given by 
	\begin{equation}\label{closedhexa}
	\CQ=\left\{(a, x_1, x_2, x_3) \in \C \times \overline{\E} \ : \ \left|\Psi_{z_1, z_2}(a, x_1, x_2, x_3)\right| \leq 1 \ \text{for every} \ z_1, z_2 \in \D \right\}.
	\end{equation}
As mentioned earlier, an $\mathbb{H}$-contraction is a commuting quadruple of Hilbert space operators with  $\overline{\mathbb{H}}$ as a spectral set. In a similar manner, we define contractions associated with the pentablock $\Pe$, tetrablock $\E$, the Euclidean unit ball $\B_2$ in $\C^2$ and the symmetrized bidisc $\G_2$ (see Definition \ref{defn_contr}). In this section, we explore the connection of $\mathbb{H}$-contractions with $\B_2$-contractions, $\E$-contractions, $\Pe$-contractions, $\Gamma$-contractions and commuting pair of contractions. Evidently, the adjoint of an $\Q$-contraction and the restriction of an $\Q$-contraction to a joint invariant subspace are $\Q$-contractions. 
	
	\begin{lem} \label{basiclem:01}
		Let $(A,X_1, X_2, X_3)$ be an $\mathbb{H}$-contraction acting on a Hilbert space $\HS$ and let $\mathcal{L}$ be a joint invariant subspace for $A, X_1, X_2, X_3$. Then
		$ (A^*, X_1^*, X_2^*, X_3^*)$ and $(A|_{\mathcal{L}}, X_1|_{\mathcal{L}} , X_2|_{\mathcal{L}}, X_3|_{\mathcal{L}})$ are $\mathbb{H}$-contractions.
	\end{lem}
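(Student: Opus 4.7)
The plan is to verify the polynomial von Neumann inequality on $\overline{\mathbb{H}}$ in both cases, since by Lemma \ref{lem2.2} this is equivalent to being an $\mathbb{H}$-contraction (using that $\overline{\mathbb{H}}$ is polynomially convex). Fix an arbitrary polynomial $p \in \mathbb{C}[z_0, z_1, z_2, z_3]$.

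For the restriction $(A|_\mathcal{L}, X_1|_\mathcal{L}, X_2|_\mathcal{L}, X_3|_\mathcal{L})$, the argument is immediate. Since $\mathcal{L}$ is a joint invariant subspace for the commuting tuple $A, X_1, X_2, X_3$, we have
\[
p(A|_\mathcal{L}, X_1|_\mathcal{L}, X_2|_\mathcal{L}, X_3|_\mathcal{L}) \;=\; p(A, X_1, X_2, X_3)\big|_{\mathcal{L}},
\]
so its norm is at most $\|p(A, X_1, X_2, X_3)\| \leq \|p\|_{\infty, \overline{\mathbb{H}}}$ by the hypothesis that $(A, X_1, X_2, X_3)$ is an $\mathbb{H}$-contraction. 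Lemma \ref{lem2.2} then yields the conclusion.

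For the adjoint tuple $(A^*, X_1^*, X_2^*, X_3^*)$, I would define the ``conjugate polynomial'' $\tilde{p}(z_0, z_1, z_2, z_3) := \overline{p(\overline{z}_0, \overline{z}_1, \overline{z}_2, \overline{z}_3)}$, which is again a holomorphic polynomial. Because the operators commute, a term-by-term check gives
\[
p(A^*, X_1^*, X_2^*, X_3^*) \;=\; \bigl[\tilde{p}(A, X_1, X_2, X_3)\bigr]^{*},
\]
and hence $\|p(A^*, X_1^*, X_2^*, X_3^*)\| = \|\tilde{p}(A, X_1, X_2, X_3)\| \leq \|\tilde{p}\|_{\infty, \overline{\mathbb{H}}}$, again by hypothesis. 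It remains to show $\|\tilde{p}\|_{\infty, \overline{\mathbb{H}}} = \|p\|_{\infty, \overline{\mathbb{H}}}$.

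The only substantive step is the symmetry claim: $\overline{\mathbb{H}}$ is invariant under coordinate-wise conjugation $(a, x_1, x_2, x_3) \mapsto (\overline{a}, \overline{x}_1, \overline{x}_2, \overline{x}_3)$. This is the point I would be most careful about. Using Theorem \ref{thm_closedhexa}, one computes for any $z = (z_1, z_2) \in \mathbb{D}^2$,
\[
\bigl|\Psi_z(\overline{a}, \overline{x}_1, \overline{x}_2, \overline{x}_3)\bigr| \;=\; \left|\frac{\overline{a}\sqrt{(1-|z_1|^2)(1-|z_2|^2)}}{1-\overline{x}_1 z_1 - \overline{x}_2 z_2 + \overline{x}_3 z_1 z_2}\right| \;=\; \bigl|\Psi_{\overline{z}}(a, x_1, x_2, x_3)\bigr| \;\leq\; 1,
\]
since $\overline{z} := (\overline{z}_1, \overline{z}_2) \in \mathbb{D}^2$; the condition $(\overline{x}_1, \overline{x}_2, \overline{x}_3) \in \overline{\mathbb{E}}$ follows similarly from the defining inequality of $\overline{\mathbb{E}}$. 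With this symmetry established, substituting $(a, x_1, x_2, x_3) \mapsto (\overline{a}, \overline{x}_1, \overline{x}_2, \overline{x}_3)$ in the supremum defining $\|\tilde{p}\|_{\infty, \overline{\mathbb{H}}}$ gives $\|\tilde{p}\|_{\infty, \overline{\mathbb{H}}} = \|p\|_{\infty, \overline{\mathbb{H}}}$, completing the proof via Lemma \ref{lem2.2}.
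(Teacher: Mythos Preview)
Your proof is correct and follows essentially the same approach as the paper: both use Lemma \ref{lem2.2}, handle the restriction via $p(A|_{\mathcal{L}},\dots)=p(A,\dots)|_{\mathcal{L}}$, and handle the adjoint by passing to the conjugate polynomial $\tilde p(z)=\overline{p(\bar z)}$ (the paper's $\widehat f$). The paper simply asserts $\|\widehat f\|_{\infty,\overline{\mathbb{H}}}=\|f\|_{\infty,\overline{\mathbb{H}}}$, whereas you supply the extra step of verifying that $\overline{\mathbb{H}}$ is invariant under coordinate-wise conjugation via Theorem \ref{thm_closedhexa}; this is a welcome clarification rather than a different route.
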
	
It follows trivially from the definition of the hexablock that if $(a, x_1, x_2, x_3) \in \mathbb{H}$, then $(x_1, x_2, x_3) \in \E$. As one might expect, the same result holds for $\overline{\mathbb{H}}$ and $\overline{\E}$. Also, $\{0\} \times \E \subseteq \Q$ and $\{0\} \times \EC \subseteq \CQ$. These results naturally extend to operator-theoretic level.	
	\begin{prop}\label{prop2.3}
		If $(A, X_1, X_2, X_3)$ is an $\mathbb{H}$-contraction, then $(X_1, X_2, X_3)$ is an $\E$-contraction. Also, $(X_1, X_2, X_3)$ is an $\E$-contraction if and only if $(0, X_1, X_2, X_3)$ is an $\mathbb{H}$-contraction.
	\end{prop}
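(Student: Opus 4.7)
The plan is to reduce both assertions to sup-norm inequalities between polynomials, which becomes possible because both $\CQ$ and $\EC$ are polynomially convex (the latter from Theorem 2.9 of \cite{Abouhajar}, recorded in the excerpt). By Proposition \ref{basicprop:01} and Lemma \ref{lem2.2}, being an $\E$- or $\Q$-contraction is equivalent to satisfying the polynomial von Neumann inequality over the corresponding closed domain, so the Taylor spectrum hypothesis plays no role.

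For the first assertion, given any $p \in \C[z_1, z_2, z_3]$, I would lift it to the four-variable polynomial $\tilde p(z_0, z_1, z_2, z_3) := p(z_1, z_2, z_3)$, so that $p(X_1, X_2, X_3) = \tilde p(A, X_1, X_2, X_3)$. Lemma \ref{lem2.2} applied to the $\Q$-contraction $(A, X_1, X_2, X_3)$ yields $\|p(X_1, X_2, X_3)\| \leq \|\tilde p\|_{\infty, \CQ}$. The coordinate projection $\pi(a, x_1, x_2, x_3) = (x_1, x_2, x_3)$ maps $\CQ$ into $\EC$ by the very definition of the hexablock, and since $\tilde p = p \circ \pi$ we obtain $\|\tilde p\|_{\infty, \CQ} \leq \|p\|_{\infty, \EC}$. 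Proposition \ref{basicprop:01} applied to the polynomially convex set $\EC$ then identifies $(X_1, X_2, X_3)$ as an $\E$-contraction.

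For the equivalence, the nontrivial direction ($\E$-contraction $\Rightarrow$ $\Q$-contraction for $(0, X_1, X_2, X_3)$) will use Theorem \ref{thm_connect}(3), which supplies the slice embedding $\EC \hookrightarrow \CQ$ sending $(x_1, x_2, x_3)$ to $(0, x_1, x_2, x_3)$. Concretely, for any $p \in \C[z_0, z_1, z_2, z_3]$, setting $q(z_1, z_2, z_3) := p(0, z_1, z_2, z_3)$ gives $p(0, X_1, X_2, X_3) = q(X_1, X_2, X_3)$, and the slice embedding furnishes $\|q\|_{\infty, \EC} \leq \|p\|_{\infty, \CQ}$; invoking the $\E$-contraction hypothesis on $(X_1, X_2, X_3)$ then closes the argument through Proposition \ref{basicprop:01}. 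The converse direction is just the first assertion applied with $A = 0$. I anticipate no genuine obstacle here; the only conceptual point is to combine polynomial convexity of both closures with the coordinate projection $\pi$ and its one-sided section from Theorem \ref{thm_connect}(3).
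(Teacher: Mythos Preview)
Your proposal is correct and follows essentially the same argument as the paper: both proofs use the polynomial lift $\tilde p(z_0,z_1,z_2,z_3)=p(z_1,z_2,z_3)$ together with the projection $\CQ\to\EC$ for the first assertion, and the restriction $q(z_1,z_2,z_3)=p(0,z_1,z_2,z_3)$ together with the inclusion $\{0\}\times\EC\subseteq\CQ$ for the second. The only cosmetic difference is that you explicitly invoke polynomial convexity of $\EC$ via Proposition~\ref{basicprop:01}, whereas the paper leaves this implicit.
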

	
	\begin{proof}
		Let $(a, x_1, x_2, x_3) \in \overline{\Q}$. Then  $(x_1, x_2, x_3) \in \overline{\E}$ and so, $\overline{\Q} \subset \overline{\D} \times \overline{\E}$.  Let $g \in \C[z_1,z_2, z_3]$ and define $f(z_0, z_1, z_2, z_3)=g(z_1, z_2, z_3)$. Then
		\begin{equation*}
			\begin{split}
				\|g(X_1, X_2, X_3)\|=\|f(A, X_1, X_2, X_3)\|
				& \leq \sup\{|f(z_0, z_1, z_2, z_3)| : (z_0, z_1, z_2, z_3) \in \overline{\mathbb{H}} \}\\
				& \leq \sup\{|f(z_0, z_1, z_2, z_3)| \ : \ z_0 \in \overline{\mathbb{D}}, \ (z_1, z_2, z_3) \in \overline{\E}\}\\
				&=\sup\{|g(z_1, z_2, z_3)| : (z_1, z_2, z_3) \in \overline{\E}\}\\
			\end{split}
		\end{equation*}
		and so, $(X_1, X_2, X_3)$ is an $\E$-contraction. Now assume that $(X_1, X_2, X_3)$ is an $\E$-contraction. For any $f \in \C[z_0, z_1, z_2, z_3]$, we define $g(z_1, z_2, z_3)=f(0, z_1, z_2, z_3)$. Then
		\[
		\|f(0, X_1, X_2, X_3)\|=\|g(X_1, X_2, X_3)\| \leq \|g\|_{\infty, \EC}= \sup\{|f(0, z_1, z_2, z_3)| : (z_1, z_2, z_3) \in \EC \} \leq \|f\|_{\infty, \CQ},
		\]
		where in the last inequality we have used the fact that $\{0\} \times \EC \subseteq \CQ$. Therefore, $(0, X_1, X_2, X_3)$ is an $\Q$-contraction, which completes the proof.
	\end{proof}
	
	It is not difficult to see that if $(a, x_1, x_2, x_3) \in \CQ$ and $\alpha \in \DC$, then $(\alpha a, \alpha x_1, \alpha x_2, \alpha^2 x_3) \in \CQ$ and $(\alpha a, x_1, x_2, x_3) \in \CQ$. We refer to Chapter 6 in \cite{Biswas} for more details on these facts. We generalize these properties of hexablock at the operator-theoretic level.
	
	\begin{prop}\label{prop2.5}
		Let $(A, X_1, X_2, X_3)$ be an $\mathbb{H}$-contraction acting on a Hilbert space $\mathcal{H}$. Then the quadruples  $(\alpha A, \alpha X_1, \alpha X_2, \alpha^2 X_3)$ and $(\alpha A, X_1, X_2,  X_3)$ are $\Q$-contractions for every $\alpha \in \DC$.
	\end{prop}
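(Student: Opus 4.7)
The plan is to reduce the proposition to the scalar invariance property of $\overline{\mathbb{H}}$ stated just above the proposition, combined with Lemma \ref{lem2.2}. That is, I want to leverage the two facts that for every $\alpha \in \overline{\mathbb{D}}$ and every $(a, x_1, x_2, x_3) \in \overline{\mathbb{H}}$, both $(\alpha a, \alpha x_1, \alpha x_2, \alpha^2 x_3)$ and $(\alpha a, x_1, x_2, x_3)$ remain in $\overline{\mathbb{H}}$ (as cited from Chapter 6 of \cite{Biswas}), and that $(A, X_1, X_2, X_3)$ being an $\mathbb{H}$-contraction is equivalent to a polynomial von Neumann inequality over $\overline{\mathbb{H}}$.

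Concretely, fix $\alpha \in \overline{\mathbb{D}}$ and an arbitrary polynomial $p \in \mathbb{C}[z_0, z_1, z_2, z_3]$. For the first quadruple, I would define the auxiliary polynomial
\[
q(z_0, z_1, z_2, z_3) = p(\alpha z_0, \alpha z_1, \alpha z_2, \alpha^2 z_3),
\]
which still lies in $\mathbb{C}[z_0, z_1, z_2, z_3]$. Substituting the commuting quadruple and using the functional calculus gives $p(\alpha A, \alpha X_1, \alpha X_2, \alpha^2 X_3) = q(A, X_1, X_2, X_3)$. Applying Lemma \ref{lem2.2} to $(A, X_1, X_2, X_3)$ and the polynomial $q$ yields
\[
\|p(\alpha A, \alpha X_1, \alpha X_2, \alpha^2 X_3)\| = \|q(A, X_1, X_2, X_3)\| \leq \|q\|_{\infty, \overline{\mathbb{H}}}.
\]
The scalar invariance $(\alpha a, \alpha x_1, \alpha x_2, \alpha^2 x_3) \in \overline{\mathbb{H}}$ immediately forces $\|q\|_{\infty, \overline{\mathbb{H}}} \leq \|p\|_{\infty, \overline{\mathbb{H}}}$, so the composed bound $\|p(\alpha A, \alpha X_1, \alpha X_2, \alpha^2 X_3)\| \leq \|p\|_{\infty, \overline{\mathbb{H}}}$ holds for every polynomial $p$. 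Lemma \ref{lem2.2} then concludes that $(\alpha A, \alpha X_1, \alpha X_2, \alpha^2 X_3)$ is an $\mathbb{H}$-contraction.

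The second quadruple is handled identically, now with the auxiliary polynomial $q(z_0, z_1, z_2, z_3) = p(\alpha z_0, z_1, z_2, z_3)$ and invoking the invariance $(\alpha a, x_1, x_2, x_3) \in \overline{\mathbb{H}}$. I do not anticipate any genuine obstacle here, since polynomial convexity of $\overline{\mathbb{H}}$ (already used to justify Lemma \ref{lem2.2}) removes the need to verify the Taylor spectrum condition separately, and the substitution trick is purely formal. The only subtlety worth noting is that $q$ is a legitimate polynomial in the original variables with coefficients differing from those of $p$ by powers of $\alpha$, which is why the substitution respects the functional calculus of the commuting tuple without any ordering issues.
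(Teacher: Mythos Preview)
Your proposal is correct and follows essentially the same approach as the paper: both arguments compose the given polynomial $p$ with the self-map of $\overline{\mathbb{H}}$ sending $(a,x_1,x_2,x_3)$ to $(\alpha a,\alpha x_1,\alpha x_2,\alpha^2 x_3)$ (respectively $(\alpha a,x_1,x_2,x_3)$), apply the polynomial von Neumann inequality for $(A,X_1,X_2,X_3)$, and then bound the sup-norm of the composite by $\|p\|_{\infty,\overline{\mathbb{H}}}$ using the scalar invariance. The paper phrases this via the maps $f_\alpha$ and $\phi_\alpha$ rather than writing out the auxiliary polynomial $q$, but the content is identical.
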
 
	
	\begin{proof}
		Let $\alpha \in \DC$.  The maps $f_\alpha, g_\alpha : \CQ \to \CQ$ given by $ f_\alpha(a, x_1, x_2, x_3)=(\alpha a, \alpha x_1, \alpha x_2, \alpha^2 x_3)$ and $g_\alpha(a, x_1, x_2, x_3)=(\alpha a, x_1, x_2, x_3)$ are both analytic. For any  $p \in \C[z_0, z_1, z_2, z_3]$, we have that 
		\begin{equation*}
			\begin{split}
				\|p(\alpha A, \alpha X_1, \alpha X_2, \alpha^2 X_3)\| =\|p\circ f_{\alpha}(A, X_1, X_2, X_3)\| 
				\leq \|p\circ f_\alpha\|_{\infty, \CQ}
				\leq \|p\|_{\infty, \CQ}  \qquad  \text{and} 
			\end{split}
		\end{equation*} 
		\begin{equation*}
			\begin{split}
				\|p(\alpha A, X_1, X_2, X_3)\| =\|p\circ g_{\alpha}(A, X_1, X_2, X_3)\| 
				\leq \|p\circ g_\alpha\|_{\infty, \CQ}
				\leq \|p\|_{\infty, \CQ},
			\end{split}
		\end{equation*} 
	which completes the proof. 
	\end{proof}
	
Next, we present an analogue of Part $(5)$ of Theorem \ref{thm_connect}	at the level of operators. 

	\begin{prop}\label{prop2.8}
		A pair $(A, B)$ of Hilbert space operators is a commuting pair of contractions if and only if $(A, 0, 0, B)$ is an $\Q$-contraction.
	\end{prop}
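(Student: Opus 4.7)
My plan is to reduce the statement to Ando's inequality for commuting contractions via Lemma \ref{lem2.2}, after first pinning down the elementary geometric fact that the $(z_0,z_3)$-slice of $\CQ$ obtained by setting $z_1=z_2=0$ coincides with $\DC^2$. Concretely, I would first prove
\[
\Big\{(a,0,0,b)\in\C^4 \, : \, (a,b)\in\DC^2\Big\}\subseteq \CQ.
\]
By Theorem \ref{thm_closedhexa}, this amounts to checking $(0,0,b)\in\EC$ (immediate from Theorem \ref{closedtetrablock} when $|b|\leq 1$) together with $|\Psi_z(a,0,0,b)|\leq 1$ for all $z_1,z_2\in\D$, which reduces to the scalar inequality
\[
|a|\sqrt{(1-|z_1|^2)(1-|z_2|^2)}\;\leq\;|1+bz_1z_2|
\]
for $|a|,|b|\leq 1$. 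I would verify this by expanding $|1+bz_1z_2|^2-(1-|z_1|^2)(1-|z_2|^2)$ and rearranging to obtain $(1-|b|)|z_1z_2|\bigl(2-(1+|b|)|z_1z_2|\bigr)+\text{(nonnegative terms)}$, which is manifestly nonnegative on $\D^2$. Conversely, evaluating the defining inequalities of $\CQ$ at $z_1=z_2=0$ gives $|a|\leq 1$, and Theorem \ref{closedtetrablock} forces $|b|\leq 1$, so the reverse inclusion also holds and the slice equals $\DC^2$.

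For the forward direction, suppose $(A,B)$ is a commuting pair of contractions. Given any $p\in\C[z_0,z_1,z_2,z_3]$, define $q(w,z):=p(w,0,0,z)\in\C[w,z]$. Then $p(A,0,0,B)=q(A,B)$, so by Ando's inequality on the bidisc,
\[
\|p(A,0,0,B)\|=\|q(A,B)\|\leq \|q\|_{\infty,\,\DC^2}=\sup_{(a,b)\in\DC^2}|p(a,0,0,b)|\leq \|p\|_{\infty,\,\CQ},
\]
where the last inequality uses the containment established above. By Lemma \ref{lem2.2}, $(A,0,0,B)$ is an $\Q$-contraction.

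For the reverse direction, assume $(A,0,0,B)$ is an $\Q$-contraction. Commutativity of $A$ and $B$ is built into the definition of an $\Q$-contraction. Applying Lemma \ref{lem2.2} to the coordinate polynomials $p(z_0,z_1,z_2,z_3)=z_0$ and $p(z_0,z_1,z_2,z_3)=z_3$ yields $\|A\|\leq \sup_{\CQ}|z_0|\leq 1$ and $\|B\|\leq \sup_{\CQ}|z_3|\leq 1$; the first bound is obtained by setting $z_1=z_2=0$ in $|\Psi_z|\leq 1$ and the second follows from $(x_1,x_2,x_3)\in\EC\Rightarrow |x_3|\leq 1$ via Theorem \ref{closedtetrablock}. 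Thus both $A$ and $B$ are contractions.

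The only nontrivial step is the scalar inequality underlying the geometric containment; everything else is bookkeeping around Lemma \ref{lem2.2} and Ando's theorem. I expect that verifying this inequality cleanly, so as to conclude $\DC^2\hookrightarrow\CQ$ via the embedding $(a,b)\mapsto(a,0,0,b)$, will be the main (and only) obstacle.
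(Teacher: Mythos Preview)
Your proof is correct and follows essentially the same route as the paper: both directions are handled via Lemma \ref{lem2.2}, with the forward direction reduced to Ando's inequality through the embedding $(a,b)\mapsto(a,0,0,b)$ and the containment $\DC\times\{0\}\times\{0\}\times\DC\subseteq\CQ$. The only difference is how that containment is justified: the paper invokes the pentablock characterization (Theorem \ref{thm2.1}) together with the $\PC$--$\CQ$ connection, whereas you verify the defining inequality of $\CQ$ directly via the scalar estimate $|1+bz_1z_2|^2-(1-|z_1|^2)(1-|z_2|^2)=(|z_1|-|z_2|)^2+(1-|b|)|z_1z_2|\bigl(2-(1+|b|)|z_1z_2|\bigr)\geq 0$, which is cleaner and self-contained.
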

	
	\begin{proof}
		It is evident that $\EC \subseteq \DC^3$ and so, $\CQ \subseteq  \DC \times  \EC \subseteq  \DC^4$. Let $f(z_0, z_1, z_2, z_3)=z_0$. Suppose $(A, 0, 0, B)$ is an $\Q$-contraction. By Proposition \ref{prop2.3}, $(0, 0, B)$ is an $\E$-contraction. Since $\EC \subseteq \DC^3$, we have that the last component of an $\E$-contraction is a contraction (e.g. see \cite{Bhattacharyya-01}) and so, $\|B\| \leq 1$. Moreover, we have that $\|A\|=	\|f(A, 0, 0, B)\|  \leq   \|f\|_{\infty, \CQ} \leq \|f\|_{\infty, \DC \times \EC} \leq 1$. Conversely, let $(A, B)$ be a commuting pair of contractions. We have by Ando's inequality (see Chapter I in \cite{Nagy}) that 
		\begin{equation}\label{Ando}
			\|p(A, B)\| \leq \|p\|_{\infty, \DC^2},
		\end{equation}
		for every $p \in \C[z_1, z_2]$. It follows from Theorem \ref{thm_connect} that $\DC \times \{0\} \times \{0\} \times \DC \subseteq \CQ$. Let $f$ be a holomorphic polynomial in four variables and let $g(z, w)=f(z, 0, 0, w)$. Using \eqref{Ando}, we have  that
		\begin{equation*}
			\begin{split}
				\|f(A, 0, 0, B)\|  =\|g(A, B)\| \leq \|g\|_{\infty, \DC^2}
				=\sup \{|f(\underline{z})| : \underline{z} \in \DC \times \{0\} \times \{0\} \times \DC  \} \leq \|f\|_{\infty, \CQ}
			\end{split}
		\end{equation*}
		and so, $(A, 0, 0, B)$ is an $\mathbb{H}$-contraction.
	\end{proof}
	
	We now show interplay between the hexablock, the Euclidean unit ball $\B_2$ in $\C^2$ and the pentablock. At the level of scalars, we have the following lemma.
	
	\begin{lem}[\cite{Biswas}, Lemmas 6.5 \& 10.10]\label{lem2.10}
		Let $(a, x_1, x_2, x_3) \in \CQ$. Then $|a|^2+|x_1|^2 \leq 1, |a|^2+|x_2|^2 \leq 1$ and $(a, x_1+x_2, x_3) \in \PC$. 
	\end{lem}
	
	As expected, this result has an operator-theoretic extension, which is given below.
	
	\begin{prop}\label{prop2.11}
		Let $(A, X_1, X_2, X_3)$ be an $\Q$-contraction. Then $(A, X_1)$ and $(A, X_2)$ are $\B_2$-contractions. Moreover, $(A, X_1+X_2, X_3)$ is a $\Pe$-contraction.
	\end{prop}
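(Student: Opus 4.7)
The plan is to invoke Lemma \ref{lem2.2} together with the scalar inclusions supplied by Lemma \ref{lem2.10}. Since $\overline{\mathbb{B}}_2$ and $\overline{\mathbb{P}}$ are polynomially convex, Proposition \ref{basicprop:01} tells us that in order to verify that a commuting tuple has one of these sets as a spectral set it suffices to establish the von Neumann inequality on ordinary holomorphic polynomials (the Taylor spectrum condition is automatic). Commutativity of the relevant subtuples is inherited from $(A, X_1, X_2, X_3)$.

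For the first assertion, fix $j \in \{1, 2\}$ and an arbitrary polynomial $p \in \mathbb{C}[w_1, w_2]$. Define the four-variable polynomial
\[
\widetilde{p}(z_0, z_1, z_2, z_3) := p(z_0, z_j),
\]
so that $\widetilde{p}(A, X_1, X_2, X_3) = p(A, X_j)$. Applying Lemma \ref{lem2.2} and then using the scalar containment from Lemma \ref{lem2.10} (which gives $(a, x_j) \in \overline{\mathbb{B}}_2$ whenever $(a, x_1, x_2, x_3) \in \overline{\mathbb{H}}$), we get
\[
\|p(A, X_j)\| \;\leq\; \|\widetilde{p}\|_{\infty,\, \overline{\mathbb{H}}} \;=\; \sup_{(a, x_1, x_2, x_3) \in \overline{\mathbb{H}}} |p(a, x_j)| \;\leq\; \sup_{(a, b) \in \overline{\mathbb{B}}_2} |p(a, b)| \;=\; \|p\|_{\infty,\, \overline{\mathbb{B}}_2}.
\]
By Proposition \ref{basicprop:01}, this proves that $(A, X_j)$ is a $\mathbb{B}_2$-contraction.

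For the second assertion, let $q \in \mathbb{C}[w_0, w_1, w_2]$ and define
\[
\widetilde{q}(z_0, z_1, z_2, z_3) := q(z_0,\, z_1 + z_2,\, z_3).
\]
Since $A, X_1, X_2, X_3$ commute, $\widetilde{q}(A, X_1, X_2, X_3) = q(A, X_1 + X_2, X_3)$. By Lemma \ref{lem2.10}, $(a, x_1 + x_2, x_3) \in \overline{\mathbb{P}}$ whenever $(a, x_1, x_2, x_3) \in \overline{\mathbb{H}}$. Hence by Lemma \ref{lem2.2},
\[
\|q(A, X_1 + X_2, X_3)\| \;\leq\; \|\widetilde{q}\|_{\infty,\, \overline{\mathbb{H}}} \;\leq\; \sup_{(a, s, p) \in \overline{\mathbb{P}}} |q(a, s, p)| \;=\; \|q\|_{\infty,\, \overline{\mathbb{P}}}.
\]
Polynomial convexity of $\overline{\mathbb{P}}$ combined with Proposition \ref{basicprop:01} then yields that $(A, X_1 + X_2, X_3)$ is a $\mathbb{P}$-contraction, completing the proof.

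There is no genuine obstacle here: the whole argument is a routine transfer of polynomial supremum bounds via the scalar containments already established in \cite{Biswas}. The only point that requires even mild care is ensuring that polynomial convexity is invoked so that the Taylor spectrum hypothesis can be bypassed, which is precisely the role of Proposition \ref{basicprop:01}.
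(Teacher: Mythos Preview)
Your proof is correct and follows essentially the same approach as the paper: both arguments push polynomials through the scalar projections furnished by Lemma \ref{lem2.10}, bound the resulting sup-norms over $\overline{\mathbb{H}}$ by the sup-norms over $\overline{\mathbb{B}}_2$ or $\overline{\mathbb{P}}$, and then invoke polynomial convexity via Proposition \ref{basicprop:01}. The only cosmetic difference is that the paper phrases the construction as composition with the holomorphic maps $(a,x_1,x_2,x_3)\mapsto(a,x_j)$ and $(a,x_1,x_2,x_3)\mapsto(a,x_1+x_2,x_3)$, whereas you write out the lifted polynomials $\widetilde{p}$ and $\widetilde{q}$ explicitly.
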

	
	\begin{proof}
		We have by Lemma \ref{lem2.10} that the maps $f_1, f_2: \CQ \to \BC$ given by $f_1(a, x_1, x_2, x_3)=(a, x_1)$ and $f_2(a, x_1, x_2, x_3)=(a, x_2)$ are analytic. Let $g\in \C[z_1, z_2]$ and let $i \in \{1, 2\}$. Then 
		\begin{equation*}
			\begin{split}
				\|g(A, X_i)\| =\|g\circ f_i(A, X_1, X_2, X_3)\| \leq  \|g\circ f_i\|_{\infty, \CQ} \leq \sup\left\{|g(a, x_i)| : (a, x_i) \in \BC \right\}= \|g\|_{\infty, \BC} \ .	
			\end{split}
		\end{equation*} 
		Therefore, $(A, X_1), (A, X_2)$ are $\B_2$-contractions. We also have by Lemma \ref{lem2.10} that the map $f:\CQ \to \PC$ given by $f(a, x_1, x_2, x_3)=(a, x_1+x_2, x_3)$ is holomorphic. Let $g \in \C[z_1, z_2, z_3]$. Then
		\begin{equation*}
			\begin{split}
				\|g(A, X_1+X_2, X_3)\|=\|g\circ f(A, X_1, X_2, X_3)\| 
				& \leq \|g\circ f\|_{\infty, \CQ} \\
				&\leq \sup \{|g(a, x_1+x_2, x_3)|: (a, x_1, x_2, x_3) \in \CQ \}\\ 
				& \leq \|g\|_{\infty, \PC}
			\end{split}
		\end{equation*}
and so, $(A, X_1+X_2, X_3)$ is a $\Pe$-contraction. The proof is now complete.
	\end{proof}
	
	Combining Propositions \ref{prop2.3} \& \ref{prop2.11}, we have the following theorem.
	
	\begin{thm}\label{thm_310}
		If $(A, X_1, X_2, X_3)$ is an $\Q$-contraction, then
		\begin{enumerate}
			\item $(A, X_1)$ and $(A, X_2)$ are $\B_2$-contractions;
			\item $(X_1, X_2, X_3)$ is an $\E$-contraction;
			\item $(A, X_1+X_2, X_3)$ is a $\Pe$-contraction.
		\end{enumerate}
	\end{thm}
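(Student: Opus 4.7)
The plan is straightforward: this theorem is essentially a compilation of Propositions \ref{prop2.3} and \ref{prop2.11}, both of which have just been established in this section. There is no new content to prove, only a restatement consolidating the three consequences in one place for ease of reference in later sections.

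More specifically, for part (2) I would directly invoke Proposition \ref{prop2.3}, which asserts that if $(A, X_1, X_2, X_3)$ is an $\mathbb{H}$-contraction then $(X_1, X_2, X_3)$ is an $\mathbb{E}$-contraction. For parts (1) and (3) I would invoke Proposition \ref{prop2.11}, which gives both that $(A, X_1)$ and $(A, X_2)$ are $\mathbb{B}_2$-contractions and that $(A, X_1+X_2, X_3)$ is a $\mathbb{P}$-contraction. The underlying mechanism in all three cases is the same: one exhibits an analytic (in fact polynomial) map $\overline{\mathbb{H}} \to \overline{\Omega}$ for the target domain $\Omega \in \{\mathbb{B}_2, \mathbb{E}, \mathbb{P}\}$, namely the coordinate projections $(a, x_1, x_2, x_3) \mapsto (a, x_i)$, $(a, x_1, x_2, x_3) \mapsto (x_1, x_2, x_3)$, and $(a, x_1, x_2, x_3) \mapsto (a, x_1+x_2, x_3)$, and then composes polynomials on $\overline{\Omega}$ with this map to transfer the von Neumann inequality from $\overline{\mathbb{H}}$ to $\overline{\Omega}$ via Lemma \ref{lem2.2} and Proposition \ref{basicprop:01}. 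The scalar-level statements required to see these maps land in the correct target closures are Theorem \ref{closedtetrablock} and Lemma \ref{lem2.10}.

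Since the work has already been carried out, there is no genuine obstacle; the only thing to be careful about is to cite the polynomial convexity of $\overline{\mathbb{B}}_2$, $\overline{\mathbb{E}}$, and $\overline{\mathbb{P}}$ so that Proposition \ref{basicprop:01} applies and the Taylor spectrum hypothesis is automatic. Accordingly I would write the proof as a one-line combination: \textquotedblleft The statements (1), (2), (3) follow immediately from Propositions \ref{prop2.3} and \ref{prop2.11}.\textquotedblright
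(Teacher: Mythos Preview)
Your proposal is correct and matches the paper's approach exactly: the paper itself presents Theorem \ref{thm_310} with the preamble ``Putting together everything, we have the next theorem'' and gives no separate proof, treating it as an immediate consolidation of Propositions \ref{prop2.3} and \ref{prop2.11}.
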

	
	The converse of Theorem \ref{thm_310} is not true. Indeed, below we show that there exist $a, x_1, x_2$ and $x_3$ in $\DC$ such that $(a, x_1), (a, x_2) \in \BC , (x_1, x_2, x_3) \in \EC, (a, x_1+x_2, x_3)\in \PC$ but $(a, x_1, x_2, x_3) \notin \CQ$.  To do so, we need the following results from \cite{Abouhajar, Biswas}.
	
		\begin{thm}[\cite{Abouhajar}, Theorem 2.2]\label{opentetrablock}
		A point $(x_1, x_2, x_3) \in \C^3$ belongs to $\E$ if and only if
		\[
		|x_1|< 1 \quad \text{and} \quad 1+|x_1|^2-|x_2|^2-|x_3|^2-2|x_1-\overline{x}_2x_3| > 0.
		\]
	\end{thm}
	
	\begin{thm}[\cite{Biswas}, Lemma 3.7 \& Theorem 3.8]\label{thm_hexa_1}
		Let $(a, x_1, x_2, x_3) \in \C \times \E$ and $(y_1, y_2, y_3) \in b\E$ with $x_1x_2=x_3$ and $y_1 \in \D$. Then 
		\[
		\sup_{z_1, z_2 \in \D}|\Psi_{z_1, z_2}(a, x_1, x_2, x_3)|=\frac{|a|}{\sqrt{(1-|x_1|^2)(1-|x_2|^2)}}  \ \text{and} \ 	\sup_{z_1, z_2 \in \D}|\Psi_{z_1, z_2}(a, y_1, y_2, y_3)|=\frac{|a|}{\sqrt{1-|y_1|^2}}.
		\]
	\end{thm}
	
We now present our example showing that the converse of Theorem \ref{thm_310} does not hold.
	
	\begin{eg}
		Let $\displaystyle \alpha_r=\frac{1}{2}\left[\sqrt{1+r^4}+(1-r^2)\right]$ and let $\beta_r=1-r^2$ for $0<r< 1\slash 2$. Define 
		\[
		a=\frac{1}{2}(\alpha_r+\beta_r)=\frac{1}{4}\left[3(1-r^2)+\sqrt{1+r^4}\right] \quad \text{and} \quad (x_1, x_2, x_3)=(r, ir, ir^2).
		\]	 
		Clearly, $\beta_r<a<\alpha_r$. It is not difficult to see that
		\[
		(1+r^4)(8-3r^2)-8=r^2(8r^2-3r^4-3)\leq r^2(8r^2-3) <0 \qquad [\text{since} \ 0<r< 1\slash 2],
		\]
		and so,
		\begin{align*}
			a^2+r^2=\frac{1}{8}[5(1+r^4)-r^2+3(1-r^2)\sqrt{1+r^4}]
			& \leq \frac{1}{8}\left[5(1+r^4)+3(1-r^2)(1+r^4)\right]\\
			&=\frac{1}{8}(1+r^4)(8-3r^2)\\
			&<1.
		\end{align*}
		Thus, $(a, x_1), (a, x_2) \in \BC$. By Theorem \ref{opentetrablock}, $(x_1, x_2, x_3) \in \E$. Let $(\lambda_1, \lambda_2)=(r, ir)$ which is a point in $\D^2$. Then $(a, x_1+x_2, x_3)=(a, \lambda_1+\lambda_2, \lambda_1\lambda_2)$ and so, $(x_1+x_2, x_3) \in \G_2$. Moreover, we have
		\[
		\frac{1}{2}|1-\overline{\lambda}_2\lambda_1|+\frac{1}{2}(1-|\lambda_1|^2)^{\frac{1}{2}}(1-|\lambda_2|^2)^{\frac{1}{2}}=	\frac{1}{2}|1+ir^2|+\frac{1}{2}(1-r^2)=\alpha_r>a.
		\]
		We recall from Theorem 5.3 in \cite{AglerIV} that a point $(b, s, p) \in \PC$ if and only if $(s, p) \in \Gamma$ and 
		\[
	|b| \leq 	\frac{1}{2}|1-\overline{\xi}_2\xi_1|+\frac{1}{2}\sqrt{(1-|\xi_1|^2)(1-|\xi_2|^2)},
		\]
	where $(s, p)=(\xi_1+\xi_2, \xi_1\xi_2)$ for $\xi_1, \xi_2 \in \DC$. Consequently, $(a, x_1+x_2, x_3) \in \PC$. We also have that
	\[
	a+r^2=\frac{1}{4}\left[3(1-r^2)+\sqrt{1+r^4}\right]+r^2=\frac{1}{4}\left[3+r^2+\sqrt{1+r^4}\right]>\frac{1}{4}(3+1)=1.
	\]
	Since $(a, x_1, x_2, x_3) \in \C \times \E$ and $x_1x_2=x_3$, we have by Theorem \ref{thm_hexa_1} that 
		\[
		\sup_{z_1, z_2 \in \D} \bigg|\frac{a\sqrt{(1-|z_1|^2)(1-|z_2|^2)}}{1-x_1z_1-x_2z_2+x_3z_1z_2}\bigg|=\frac{|a|}{\sqrt{(1-|x_1|^2)(1-|x_2|^2)}}=\frac{a}{1-r^2}>1.
		\]
		It follows from \eqref{closedhexa} that $(a, x_1, x_2, x_3) \notin \CQ$.  \qed 
	\end{eg}
	
	Given $(a, x_1), (a, x_2) \in \BC$, it is natural to ask if there exists $x_3 \in \DC$ such that $(a, x_1, x_2, x_3) \in \CQ$. Indeed, the next few results guarantee the existence of such a point $x_3 \in \DC$ .

	\begin{lem} \label{basiclem:03}
		Let $(a, x_1) \in \BC$. Then there exists $x_3 \in \mathbb{T}$ such that $(a, x_1, x_1, x_3) \in \CQ$. 
	\end{lem}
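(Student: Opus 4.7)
The plan is to exhibit an explicit $x_3 \in \mathbb{T}$ such that $(x_1, x_1, x_3)$ lies in the distinguished boundary $b\mathbb{E}$ of the tetrablock, and then use the formula from Theorem \ref{thm_hexa_1} to bound the function $\sup_{z_1, z_2 \in \mathbb{D}} |\Psi_z|$ directly in terms of $|a|$ and $|x_1|$, invoking the hypothesis $|a|^2 + |x_1|^2 \leq 1$ to finish. Note that by Theorem \ref{thm_closedhexa}, showing that $(a, x_1, x_1, x_3) \in \overline{\mathbb{H}}$ reduces to verifying two things: $(x_1, x_1, x_3) \in \overline{\mathbb{E}}$ and $\sup_{z_1, z_2 \in \mathbb{D}} |\Psi_z(a, x_1, x_1, x_3)| \leq 1$.

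For the construction, I would set $x_3 = x_1^2 / |x_1|^2$ whenever $x_1 \neq 0$, and $x_3 = 1$ when $x_1 = 0$. In either case $|x_3| = 1$, and a direct check of the identity $x_1 = \overline{x}_1 x_3$ together with $|x_1| \leq 1$ (which follows from $|a|^2+|x_1|^2\le 1$) allows us to apply part (1) of Theorem \ref{thm6.1} and conclude that $(x_1, x_1, x_3) \in b\mathbb{E} \subseteq \overline{\mathbb{E}}$.

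Next, to bound $\sup_{z_1, z_2 \in \mathbb{D}} |\Psi_z|$, I split on whether $|x_1| < 1$ or $|x_1| = 1$. If $|x_1| < 1$, then $(x_1, x_1, x_3) \in b\mathbb{E}$ with first coordinate in $\mathbb{D}$, so the second formula in Theorem \ref{thm_hexa_1} applies, yielding
\[
\sup_{z_1, z_2 \in \mathbb{D}} |\Psi_z(a, x_1, x_1, x_3)| \;=\; \frac{|a|}{\sqrt{1-|x_1|^2}} \;\leq\; 1,
\]
the last inequality being equivalent to $|a|^2 + |x_1|^2 \leq 1$. If $|x_1| = 1$, the hypothesis forces $a = 0$, so $\Psi_z \equiv 0$ and the bound is trivial.

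The only potential obstacle is the boundary case $|x_1| = 1$, since the formula from Theorem \ref{thm_hexa_1} there is stated under the assumption $y_1 \in \mathbb{D}$; however, as noted, $|x_1| = 1$ forces $a = 0$ from the biball constraint, and the verification becomes immediate. Combining the two cases with the earlier verification $(x_1, x_1, x_3) \in \overline{\mathbb{E}}$, Theorem \ref{thm_closedhexa} delivers $(a, x_1, x_1, x_3) \in \overline{\mathbb{H}}$ with the constructed $x_3 \in \mathbb{T}$.
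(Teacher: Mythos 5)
Your proof is correct, but it takes a different route from the paper's. The paper's own proof of this lemma is a two-line reduction: it quotes Lemma 3.17 of \cite{PalN}, which says $(a, s\slash 2) \in \BC$ if and only if there is $p \in \T$ with $(a,s,p) \in \PC$, and then transfers to the hexablock via part (4) of Theorem \ref{thm_connect}. You instead build $x_3$ explicitly, taking $x_3 = x_1^2\slash|x_1|^2$ (and $x_3=1$ if $x_1=0$), verify $x_1=\overline{x}_1x_3$ so that $(x_1,x_1,x_3)\in b\E$ by Theorem \ref{thm6.1}(1), and then apply the second supremum formula of Theorem \ref{thm_hexa_1} to get $\sup_z|\Psi_z| = |a|\slash\sqrt{1-|x_1|^2}\le 1$, which is exactly the biball condition; the boundary case $|x_1|=1$ is handled correctly since it forces $a=0$ and hence $\Psi_z\equiv 0$ (alternatively, one could quote Theorem \ref{thm_connect}(3) there). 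What your argument buys is self-containedness — it avoids the external pentablock lemma entirely and yields an explicit $x_3$ — and it is in fact the same technique the paper itself uses for Case 1 of the more general proposition that immediately follows (where $|x_1|=|x_2|$), specialized to $x_2=x_1$; the paper's proof of the lemma, by contrast, is shorter but leans on a result proved elsewhere.
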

	
	\begin{proof} We have by Lemma 3.17 in \cite{PalN} that $(a, s\slash 2) \in \BC$ if and only if there exists $p \in \mathbb{T}$ such that $(a, s, p) \in \PC$. Let $(a, x_1)\in \BC$. Then there exists $x_3 \in \mathbb{T}$ such that $(a, 2x_1, p) \in \PC$ and so, we have by Theorem \ref{thm_connect} that $(a, x_1, x_1, x_3) \in \CQ$. 
	\end{proof}
	
	Next, we present a generalization of the above result. 
	
	\begin{prop}
		Let	$(a, x_1), (a, x_2) \in \BC$. Then there exists $x_3 \in \DC$ such that $(a, x_1, x_2, x_3) \in \CQ$.
	\end{prop}

	\begin{proof} Since $|x_1|, |x_2| \leq 1$, it follows from Theorem \ref{opentetrablock} that $(rx_1, rx_2, r^2x_1x_2) \in \E$ for $0<r<1$ and so, $(x_1, x_2, x_1x_2) \in \EC$. If $a=0$, then we have by Theorem \ref{thm_connect} that $(a, x_1, x_2, x_1x_2) \in \CQ$. If $a \ne 0$, then $|x_1|, |x_2| \leq \sqrt{1-|a|^2}$ and so, $x_1, x_2 \in \D$. We discuss two cases from here onwards. 
		
		\medskip 	
		\noindent \text{Case 1}. Let $|x_1|=|x_2|$. We recall from Theorem 7.1 in \cite{Abouhajar} that  $(x_1, x_2, x_3) \in b\E$ if and only if $x_1=\overline{x}_2x_3, |x_3|=1$ and $|x_2| \leq 1$.  In this case, one can find $x_3 \in \T$ such that $x_1=\overline{x}_2x_3$ and so, $(x_1, x_2, x_3) \in b\E$. It follows from Theorem \ref{thm_hexa_1} that  
		\[
		\sup_{z_1, z_2 \in \D}|\Psi_{z_1, z_2}(a, x_1, x_2, x_3)|=\frac{|a|}{\sqrt{1-|x_1|^2}} \leq 1
		\]
		and so, by \eqref{closedhexa}, $(a, x_1, x_2, x_3) \in \CQ$. 
		
		\medskip 	
		\noindent \text{Case 2}. Let $|x_1|<|x_2|$ and define $x_3=x_1\slash \overline{x}_2$. Note that $(x_1, x_2, x_3) \in \D^3$ and $x_1=\overline{x}_2x_3$. We have by Theorem \ref{opentetrablock} that $(x_1, x_2, x_3) \in \E$. We have by Proposition 3.1 in \cite{Biswas} that $\underset{z_1, z_2 \in \D}{\sup}|\Psi_{z_1, z_2}(a, x_1, x_2, x_3)|$ is attained at $(z_1, z_2)=(0, \overline{x}_2)$. Consequently, we have  
		\[
		\sup_{z_1, z_2 \in \D}|\Psi_{z_1, z_2}(a, x_1, x_2, x_3)|= \bigg|\frac{a\sqrt{1-|x_2|^2}}{1-x_2\overline{x}_2}\bigg|=\frac{|a|}{\sqrt{1-|x_2|^2}}\leq 1.
		\]
		By \eqref{closedhexa}, $(a, x_1, x_2, x_3) \in \CQ$. For the remaining case when $|x_2|<|x_1|$, the above discussion shows that $(a, x_2, x_1, x_3) \in \CQ$. It now follows from \eqref{closedhexa} that $(a, x_1, x_2, x_3) \in \CQ$. 
	\end{proof}

	We now present another main result of this section.	
	
	\begin{thm}\label{thm_connectII}
		Let $A, X_1, X_2, X_3, S$ and $P$ be operators on a Hilbert space $\HS$. Then the following holds.
		\begin{enumerate}
			\item $(A, X_1)$ is a $\B_2$-contraction if and only if $(A, X_1, 0, 0)$ is an $\Q$-contraction.
			\item $(X_1, X_2, X_3)$ is an $\E$-contraction if and only if $(0, X_1, X_2, X_3)$ is an $\Q$-contraction.
			\item $(A, S, P)$ is a $\Pe$-contraction if and only if $(A, S\slash 2, S\slash 2, P)$ is an $\Q$-contraction.
			\item $(S, P)$ is a $\Gamma$-contraction if and only if $(0, S\slash 2, S\slash 2, P)$ is an $\Q$-contraction.
			
			\item $(A, X_3)$ is a commuting pair of contractions if and only if $(A, 0, 0, X_3)$ is an $\Q$-contraction.
		\end{enumerate}
	\end{thm}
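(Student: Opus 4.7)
The plan is to break the five equivalences into two groups. Parts (2) and (5) are exact restatements of Propositions \ref{prop2.3} and \ref{prop2.8} respectively, so I would simply cite them. The remaining three parts all follow from a uniform two-sided template whose ingredients are the scalar embeddings of Theorem \ref{thm_connect}, the polynomial convexity of the smaller domains $\BC$, $\PC$, $\Gamma$, and the projection-type results in Propositions \ref{prop2.3} and \ref{prop2.11}.

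For the forward direction of (1), I would take $f \in \C[z_0, z_1, z_2, z_3]$ and set $g(w_0, w_1) = f(w_0, w_1, 0, 0)$, so that $g(A, X_1) = f(A, X_1, 0, 0)$. Since $\BC$ is polynomially convex, the $\B_2$-contraction hypothesis gives $\|g(A, X_1)\| \leq \|g\|_{\infty, \BC}$, and Theorem \ref{thm_connect}(1) yields $\{(a, x_1, 0, 0) : (a, x_1) \in \BC\} \subseteq \CQ$, so the right-hand side is at most $\|f\|_{\infty, \CQ}$. An application of Lemma \ref{lem2.2} finishes this direction. For (3) and (4), the same template works with the substitutions $g(w_0, w_1, w_2) = f(w_0, w_1/2, w_1/2, w_2)$ and $g(w_1, w_2) = f(0, w_1/2, w_1/2, w_2)$ respectively, invoking parts (4) and (2) of Theorem \ref{thm_connect} together with polynomial convexity of $\PC$ and $\Gamma$.

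For the reverse directions, (1) is immediate from Proposition \ref{prop2.11}(1), and (3) follows from Proposition \ref{prop2.11}(3) via the identity $S/2 + S/2 = S$. Part (4) needs one extra step: starting from $(0, S/2, S/2, P)$ as an $\Q$-contraction, Proposition \ref{prop2.11} gives that $(0, S, P)$ is a $\Pe$-contraction. Then for $q \in \C[z_1, z_2]$, the polynomial $\tilde q(z_0, z_1, z_2) = q(z_1, z_2)$ satisfies $\|q(S, P)\| = \|\tilde q(0, S, P)\| \leq \|\tilde q\|_{\infty, \PC} \leq \|q\|_{\infty, \Gamma}$, where the last inequality uses the containment $\PC \subseteq \C \times \Gamma$ coming from Theorem \ref{thm2.1}; polynomial convexity of $\Gamma$ and Proposition \ref{basicprop:01} then deliver the conclusion.

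No single step is a genuine obstacle, since all the heavy lifting, namely the scalar containments, the polynomial convexity of the closures of these domains, and the projection-type lemmas, has already been established earlier in the section. The only point that requires mild care is the reverse direction of (4), which is routed through the pentablock rather than folded directly into the $\Q$-contraction hypothesis; everything else is mechanical bookkeeping of polynomial substitutions.
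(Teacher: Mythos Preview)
Your proposal is correct and matches the paper's proof almost exactly: parts (2) and (5) are cited from Propositions \ref{prop2.3} and \ref{prop2.8}, and parts (1) and (3) use the identical polynomial-substitution template with Theorem \ref{thm_connect} for the forward direction and Proposition \ref{prop2.11} (equivalently Theorem \ref{thm_310}) for the converse. The only deviation is in part (4): the paper invokes an external result (Proposition 3.11 of \cite{PalN}) asserting that $(S,P)$ is a $\Gamma$-contraction if and only if $(0,S,P)$ is a $\Pe$-contraction, and then appeals to part (3), whereas you prove the needed implication directly via the containment $\PC \subseteq \C \times \Gamma$; your route is slightly more self-contained but otherwise equivalent.
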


	\begin{proof} We prove the parts (1), (3) and (4). Also, parts (2) and (5) follow from Proposition \ref{prop2.3} and Proposition \ref{prop2.8} respectively.
		
		\smallskip 
		
		\noindent Proof of (1). Let $(A, X_1)$ be a $\B_2$-contraction. Take $f \in \C[z_0, z_1,z_2,z_3]$ and define $g(z_0, z_1)=f(z_0, z_1, 0, 0)$. Then 
		\begin{equation*}
			\begin{split}
				\|f(A, X_1, 0, 0)\| =\|g(A, X_1)\| \leq \|g\|_{\infty, \BC}
				=\sup \{|f(z_0, z_1, 0, 0)|: (z_0, z_1) \in \BC \} \leq \|f\|_{\infty, \CQ} ,
			\end{split}
		\end{equation*}	
		where the last inequality follows from Theorem \ref{thm_connect}. Thus, $(A, X, 0, 0)$ is an $\Q$-contraction. The converse to part (1) follows from Proposition \ref{prop2.11}.
		
		\smallskip 
		
		\noindent Proof of (3). Let $(A, S, P)$ be a $\Pe$-contraction. Take $f \in \C[z_0, z_1,z_2,z_3]$ and define $g(z_0, z_1, z_3)=f(z_0, z_1\slash 2, z_1\slash 2, z_3)$. Then 
		\begin{equation*}
			\begin{split}
				\|f(A, S\slash 2, S\slash 2, P)\| =\|g(A, S, P)\|
				\leq \|g\|_{\infty, \PC}
				&=\sup \{|f(z_0, z_1\slash 2, z_1\slash 2, z_3)|: (z_0, z_1, z_3) \in \PC \}\\
				& \leq \|f\|_{\infty, \CQ} ,
			\end{split}
		\end{equation*}	
		where the last inequality follows from Theorem \ref{thm_connect}. Therefore, $(A, S\slash 2, S\slash 2, P)$ is an $\Q$-contraction. The converse follows directly from part (3) of Theorem \ref{thm_310}. 
		\smallskip 
		
		\noindent Proof of (4). We have by Proposition 3.11 in \cite{PalN} that $(S, P)$ is a $\Gamma$-contraction if and only if $(0, S, P)$ is a $\Pe$-contraction. The latter is possible if and only if $(0, S\slash 2, S\slash 2, P)$ is an $\Q$-contraction which follows from part (3). The proof is now complete. 
	\end{proof}

	\section{The hexablock unitaries}\label{sec_Q_uni}
	
	\vspace{0.2cm}
	
	\noindent Recall that an $\mathbb{H}$-unitary is a normal $\mathbb{H}$-contraction whose joint spectrum lies in the distinguished boundary $b\mathbb{H}$ of the hexablock. In this section, we find several characterizations for the $\Q$-unitaries and find their interplay with $\B_2$-unitaries and $\E$-unitaries. We recall from the literature various characterizations for the points in $b\Q$.
\begin{thm}[\cite{Biswas}, Theorem 8.21] \label{thm:distP}
		For $(a, x_1, x_2, x_3) \in \mathbb{C}^4$, the following are equivalent:
		\begin{enumerate}
			\item $(a, x_1, x_2, x_3) \in b\mathbb{H}$;
			\item $(x_1, x_2, x_3) \in b\E, |a|^2+|x_1|^2=1$;
			\item there is a unitary matrix $U=[u_{ij}]_{2 \times 2}$ such that 
			$(a, x_1, x_2, x_3)=(u_{21}, u_{11}, u_{22}, det(U)).
			$
		\end{enumerate}
\end{thm}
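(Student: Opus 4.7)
The plan is to establish $(2) \Leftrightarrow (3)$ by a direct matrix construction and then to prove the two inclusions $S \subseteq b\mathbb{H}$ and $b\mathbb{H} \subseteq S$, where $S$ denotes the common subset of $\mathbb{C}^4$ described by items $(2)$ and $(3)$.

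For $(3) \Rightarrow (2)$, if $U = [u_{ij}] \in \mathcal{U}(2)$, then Theorem~\ref{thm6.1}(4) places $(u_{11}, u_{22}, \det U) \in b\mathbb{E}$, while the first column of $U$ being a unit vector gives $|u_{21}|^2 + |u_{11}|^2 = 1$. For $(2) \Rightarrow (3)$, Theorem~\ref{thm6.1} supplies $|x_3| = 1$, $x_1 = \bar{x}_2 x_3$, hence $|x_1| = |x_2|$, so the hypothesis yields $|a|^2 = 1 - |x_1|^2 = 1 - |x_2|^2$. The explicit matrix
\[
U = \begin{pmatrix} x_1 & -\bar{a}\,x_3 \\ a & x_2 \end{pmatrix} \qquad \text{(with } U = \operatorname{diag}(x_1, x_2) \text{ when } a = 0\text{)}
\]
then satisfies $UU^* = I$ (routine check using $x_1 = \bar{x}_2 x_3$ and $|x_3| = 1$) and $\det U = x_1 x_2 + |a|^2 x_3 = |x_2|^2 x_3 + (1 - |x_2|^2) x_3 = x_3$.

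For $(2) \Rightarrow (1)$, I would show each $p_0 = (a_0, x_1^0, x_2^0, x_3^0) \in S$ is a peak point of $\overline{\mathbb{H}}$ via the candidate
\[
f(a, x_1, x_2, x_3) = \tfrac{1}{2}\bigl(\bar{a}_0 a + \bar{x}_1^0 x_1 + \bar{x}_3^0 x_3\bigr).
\]
The estimate $|f| \leq 1$ on $\overline{\mathbb{H}}$ needs the bound $|a|^2 + |x_1|^2 \leq 1$ on $\overline{\mathbb{H}}$, which I would extract by specializing the defining inequality of Theorem~\ref{thm_closedhexa} to $z_2 = 0$ and $z_1 = r\bar{x}_1 / |x_1|$ and letting $r \to 1^{-}$ (the edge case $|x_1| = 1$ handled by a separate limiting argument). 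Combined with $|x_3| \leq 1$ (since $(x_1, x_2, x_3) \in \overline{\mathbb{E}}$) and the Cauchy--Schwarz bound $|\bar{a}_0 a + \bar{x}_1^0 x_1| \leq \sqrt{|a_0|^2 + |x_1^0|^2}\,\sqrt{|a|^2 + |x_1|^2} \leq 1$, this yields $|f| \leq 1$. Equality at some $p \in \overline{\mathbb{H}}$ forces both $(a, x_1) = (a_0, x_1^0)$ (saturating Cauchy--Schwarz together with $|a_0|^2 + |x_1^0|^2 = 1$) and $x_3 = x_3^0$; then $|x_3| = 1$ places $(x_1^0, x_2, x_3^0)$ in $b\mathbb{E}$ by Theorem~\ref{thm6.1}(6), and the relation $x_1 = \bar{x}_2 x_3$ forces $x_2 = x_2^0$. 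Hence $f$ peaks only at $p_0$, so $p_0 \in b\mathbb{H}$.

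For $(1) \Rightarrow (2)$, I would prove that $S$ is a closed boundary for $A(\overline{\mathbb{H}})$; minimality of $b\mathbb{H}$ then forces $b\mathbb{H} \subseteq S$. Recall from the introduction that $\overline{\mathbb{H}}$ is polynomially convex and $\overline{\mathbb{H}} = \widehat{\overline{\mathbb{H}}_N}$, where $\overline{\mathbb{H}}_N = \pi(\overline{\mathbb{B}}_{\|\cdot\|})$ and $\pi(A) = (a_{21}, a_{11}, a_{22}, \det A)$; by the equivalence already established, $\pi(\mathcal{U}(2)) = S$. Since the Shilov boundary of $\overline{\mathbb{B}}_{\|\cdot\|}$ with respect to holomorphic polynomials is the unitary group $\mathcal{U}(2)$ (a classical fact), for every polynomial $p \in \mathbb{C}[a, x_1, x_2, x_3]$ one has
\[
\sup_{\overline{\mathbb{H}}} |p| = \sup_{\overline{\mathbb{H}}_N} |p| = \sup_{A \in \overline{\mathbb{B}}_{\|\cdot\|}} |p \circ \pi(A)| = \sup_{U \in \mathcal{U}(2)} |p \circ \pi(U)| = \sup_{S} |p|,
\]
and polynomial density in $A(\overline{\mathbb{H}}) = P(\overline{\mathbb{H}})$ (from polynomial convexity) extends this to all $f \in A(\overline{\mathbb{H}})$. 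Thus $S$ is a closed boundary and $b\mathbb{H} \subseteq S$, concluding the proof. The main obstacle I anticipate is the verification $|a|^2 + |x_1|^2 \leq 1$ on $\overline{\mathbb{H}}$: though morally clear, extracting it from the pointwise sup definition requires a careful optimisation in $z_1$, with the degenerate case $|x_1| = 1$ needing separate treatment.
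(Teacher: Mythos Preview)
The paper itself does not prove this theorem; it is quoted as a preliminary result from \cite{Biswas}, so there is no in-paper proof to compare against. Your overall strategy is sound, but the $(2)\Rightarrow(1)$ step contains two concrete errors.

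First, your peaking function $f(a,x_1,x_2,x_3)=\tfrac{1}{2}(\bar a_0 a+\bar x_1^0 x_1+\bar x_3^0 x_3)$ does \emph{not} peak only at $p_0$. For instance, with $p_0=(1,0,0,1)\in S$ one has $|f(i,0,0,i)|=|i|=1$ as well. In general, $|f(p)|=1$ only forces $(a,x_1,x_3)=e^{i\theta}(a_0,x_1^0,x_3^0)$ for some real $\theta$, not $\theta=0$. The standard remedy is to replace $f$ by $g=\tfrac12(1+f)$; then $|g(p)|=1$ forces $f(p)=1$, and your argument that $f(p)=1$ implies $\bar x_3^0x_3=1$ and $\bar a_0a+\bar x_1^0x_1=1$ (hence $p=p_0$) is correct.

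Second, your proposed extraction of $|a|^2+|x_1|^2\le 1$ via $z_2=0$, $z_1=r\bar x_1/|x_1|$ and $r\to 1^-$ fails: in that limit $\dfrac{|a|\sqrt{1-r^2}}{1-r|x_1|}\to 0$, which yields no information. The correct choice is $r=|x_1|$, i.e.\ $z_1=\bar x_1$; a one-variable maximisation of $(1-r^2)/(1-r|x_1|)^2$ over $r\in[0,1)$ shows the maximum is $1/(1-|x_1|^2)$, attained at $r=|x_1|$, giving exactly $|a|^2\le 1-|x_1|^2$. Alternatively, you may simply cite Lemma~\ref{lem2.10}, which records this bound.

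With these two fixes your proof is complete. The $(1)\Rightarrow(2)$ direction via the Shilov boundary of $\overline{\mathbb B}_{\|\cdot\|}$ is correct; the needed identity $\overline{\mathbb H}=\widehat{\overline{\mathbb H}_N}$ follows from $\mathbb H_N\subseteq\mathbb H$ (stated in the introduction) together with polynomial convexity of $\overline{\mathbb H}$ and the relation $\mathbb H=\text{int}(\widehat{\overline{\mathbb H}_N})$.
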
 
As it turns out that each of the characterizations in Theorem \ref{thm:distP} for a point in $b\Q$ gives a description of an $\Q$-unitary. In particular, we provide a characterization of $\Q$-unitaries in terms of $\B_2$-unitaries and $\E$-unitaries. For this purpose, we recall from the literature the descriptions of unitaries associated with the tetrablock $\E$ and the Euclidean unit ball $\B_n$ in $\C^n$. 
	
\begin{thm}[\cite{Bhattacharyya-01}, Theorem 5.4]\label{thm_E_unitary}
	Let $(X_1, X_2, X_3)$ be a commuting triple of operators acting on a Hilbert space $\mathcal{H}$. Then the following statements are equivalent.
	
	\begin{enumerate}
		\item $(X_1, X_2, X_3)$ is an $\E$-unitary;
		\item $(X_1, X_2, X_3)$ is an $\E$-contraction and $X_3$ is a unitary.
		\item $X_3$ is a unitary, $X_1=X_2^*X_3$ and $\|X_2\| \leq 1$;
		
	\end{enumerate}
\end{thm}	

	\begin{thm}[\cite{EschmeierII}, Section 0 \& \cite{AthavaleIII}, Proposition 2]\label{prop3.2}
	A commuting tuple $\underline{U}=(U_1, \dotsc, U_n)$ of operators is a $\B_n$-unitary if and only if each $U_j$ is normal and $U_1^*U_1+\dotsc +U_n^*U_n=I$. Moreover, $\underline{U}$ is a $\B_n$-isometry if and only if $U_1^*U_1+\dotsc +U_n^*U_n=I$.
\end{thm}

Before stating the main result of this section, we recall a basic lemma (see \cite{Nagy}, Chapter I) that is used repeatedly in this article.
	
	\begin{lem}\label{lem_basic}
		Let $A, B, T$ be operators on a Hilbert space $\HS$ and let $\|T\| \leq 1$. If $AD_T^n=D_T^nB$ for some $n \in \N$, then $AD_T=D_TB$.
	\end{lem}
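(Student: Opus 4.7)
The plan is to use the continuous functional calculus for the positive operator $D_T = (I-T^*T)^{1/2}$, combined with a polynomial approximation argument. Since $\|T\|\le 1$, $D_T$ is a positive contraction, so $\sigma(D_T)\subseteq [0,1]$, and hence $D_T^n$ is itself a positive contraction with $\sigma(D_T^n)\subseteq [0,1]$. Moreover, $D_T$ is the unique positive $n$-th root of $D_T^n$, so $D_T$ is recoverable from $D_T^n$ via the continuous functional calculus; more precisely, $D_T = f(D_T^n)$ where $f(s)=s^{1/n}$.

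The second step is to approximate $D_T$ in operator norm by polynomials in $D_T^n$ with zero constant term. By the Weierstrass approximation theorem, there exist polynomials $p_k$ with $p_k(s)\to s^{1/n}$ uniformly on $[0,1]$. Since $p_k(0)\to 0$, the shifted polynomials $q_k(s) = p_k(s) - p_k(0)$ still converge uniformly to $s^{1/n}$ on $[0,1]$ and satisfy $q_k(0)=0$. By the continuity of functional calculus for the self-adjoint contraction $D_T^n$, we get $q_k(D_T^n)\to D_T$ in operator norm.

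The third step is to propagate the intertwining relation. Writing $q_k(s) = \sum_{j\ge 1} c_{k,j}\, s^j$, we have $q_k(D_T^n) = \sum_{j\ge 1} c_{k,j}\, D_T^{jn}$, so verifying $A\,q_k(D_T^n) = q_k(D_T^n)\,B$ for every $k$ reduces to verifying
\[
A D_T^{jn} = D_T^{jn}\, B \qquad \text{for all } j\ge 1,
\]
starting from the hypothesis at $j=1$. Once this is established, taking the norm limit $k\to\infty$ in $A\,q_k(D_T^n) = q_k(D_T^n)\,B$ yields $AD_T = D_T B$, as required.

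The hard part is the iterative step propagating the intertwining to every power $D_T^{jn}$; this is where the positivity of $D_T$ and the uniqueness of its positive $n$-th root are essential, and the argument mirrors the standard functional-calculus techniques for contractions developed in Chapter I of Sz.-Nagy and Foias \cite{Nagy}. An alternative formulation is to pass to the spectral decomposition $D_T = \int_0^1 t\, dE(t)$, rewrite the hypothesis as an identity of scalar-valued spectral integrals, and apply Stone--Weierstrass to deduce the analogous identity with $t^n$ replaced by $t$. Either approach ultimately rests on the fact that the commutative $C^\ast$-algebra generated by $D_T^n$ coincides with that generated by $D_T$.
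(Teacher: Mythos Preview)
Your overall strategy---Weierstrass approximation of $s\mapsto s^{1/n}$ on $[0,1]$ followed by the continuous functional calculus for the positive contraction $D_T^n$---is precisely the paper's approach. However, the step you correctly flag as ``the hard part,'' namely propagating $AD_T^n=D_T^nB$ to $AD_T^{jn}=D_T^{jn}B$ for every $j\ge 1$, is not merely hard: it is false in general, and hence so is the lemma as stated. From $AD_T^n=D_T^nB$ one obtains only $AD_T^{2n}=D_T^nBD_T^n$, and nothing forces $BD_T^n=D_T^nB$. Concretely, on $\C^2$ take $T=\operatorname{diag}(0,\sqrt{3}/2)$ so that $D_T=\operatorname{diag}(1,1/2)$, and set
\[
A=\begin{pmatrix}0&1\\0&0\end{pmatrix},\qquad B=\begin{pmatrix}0&1/4\\0&0\end{pmatrix}.
\]
Then $AD_T^2=D_T^2B=\left(\begin{smallmatrix}0&1/4\\0&0\end{smallmatrix}\right)$, yet $AD_T=\left(\begin{smallmatrix}0&1/2\\0&0\end{smallmatrix}\right)\neq\left(\begin{smallmatrix}0&1/4\\0&0\end{smallmatrix}\right)=D_TB$. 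Neither the positivity of $D_T$ nor the fact that $C^*(D_T^n)=C^*(D_T)$ helps here, because $A$ and $B$ need not lie in (or commute with) that algebra.

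The paper's own proof contains the identical gap: it asserts ``Then $Ap(D_T^n)=p(D_T^n)B$ for all $p\in\C[z]$'' immediately from the hypothesis, which is the same unjustified propagation. What rescues the paper is that every application of the lemma is made in a situation where $A=B$, or more generally where $B$ already commutes with $D_T^n$; in that case $AD_T^{jn}=D_T^{jn}B$ does follow by a straightforward induction, and then your (and the paper's) approximation argument is valid. The lemma should therefore carry the extra hypothesis that $A=B$, or that one of $A,B$ commutes with $D_T^n$.
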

	
	\begin{proof}
		We prove this result here for the sake of completeness. Let $AD_T^n=D_T^nB$ for some $n \in \N$. Then $Ap(D_T^n)=p(D_T^n)B$ for all $p \in \C[z]$. Let $p_k(z)$ be a sequence of polynomials converging uniformly to $z^{1\slash n}$ on  $[0, 1]$. Then the sequence $p_k(D_T^n)$ converges to $D_T$ in the operator norm topology. Consequently, we have that 
		$
		AD_T= \underset{k\to \infty}{\lim}Ap_k(D_T^n)=\underset{k\to \infty}{\lim} p_k(D_T^n)B=D_TB.
		$
	\end{proof}

We now present the characterizations of the unitaries associated with the hexablock. 
	
	\begin{thm}\label{thm_Q_unitary}
		Let $\underline{N}=(N_0, N_1, N_2, N_3)$ be a commuting quadruple of operators. Then the following are equivalent.
		\begin{enumerate}
			\item $\underline{N}$ is an $\mathbb{H}$-unitary ;
			\item $N_0^*$ is subnormal, $(N_1, N_2, N_3)$ is an $\E$-unitary and $N_0^*N_0=I-N_1^*N_1$ ;
			
			\item $(N_1, N_2, N_3)$ is an $\E$-unitary, $N_0^*N_0=I-N_1^*N_1$ and $N_0N_0^*=I-N_1N_1^*$;
			
			\item $(N_0, N_1)$ is a $\B_2$-unitary and $(N_1, N_2, N_3)$ is an $\E$-unitary ;
			
			\item there is a $2 \times 2$ unitary block matrix $A=[A_{ij}]$, where $A_{ij}$ are commuting normal operators, such that 	$
			\underline{N}=(A_{21}, A_{11}, A_{22}, A_{11}A_{22}-A_{12}A_{21}).
			$
		\end{enumerate}
	\end{thm}
	
	\begin{proof} The condition $(3) \implies (2)$ is trivial.  Suppose condition $(2)$ holds. Then $(N_0, N_1)$ is a commuting pair of operators such that $N_0^*$ is subnormal and $N_1$ is normal. By Fuglede's theorem \cite{Fuglede}, $N_0$ doubly commutes with $N_1$. Thus,  $N_0$ is quasinormal, i.e., $N_0$ commutes with $N_0^*N_0$, because $N_0^*N_0=I-N_1^*N_1$. It is well-known that every quasinormal operator is subnormal and so, $N_0$ is subnormal. Since both $N_0, N_0^*$ are subnormal, it follows that $N_0$ is normal. Hence, condition $(3)$ follows, thereby giving the equivalence of conditions $(2)$ and $(3)$. The equivalence of $(3)$ and $(4)$ follows from Theorem \ref{prop3.2}.	We now prove the remaining implications.
		
		\medskip
		
		\noindent $(1) \implies (2).$ Let $\underline{N}$ be an $\Q$-unitary. Then $N_0, N_1, N_2, N_3$ are commuting normal operators such that $\sigma_T(N_0, N_1, N_2, N_3) \subseteq b\mathbb{H}$. Let $(x_1, x_2, x_3) \in \sigma_T(N_1, N_2, N_3)$. We have by spectral mapping theorem that there exists some $a \in \C$ such that $(a, x_1, x_2, x_3) \in \sigma_T(\underline{N}) \subseteq b\Q$. It follows from Theorem \ref{thm:distP} that $(x_1, x_2, x_3) \in b\E$. Thus, $\sigma_T(N_1, N_2, N_3) \subseteq b\E$ and so, $(N_1, N_2, N_3)$ is an $\E$-unitary. The commutative $C^*$-algebra generated by $N_0, N_1, N_2, N_3$ is isometrically isomorphic to the $C(\sigma_T(\underline{N}))$ via the map that takes the coordinate function $z_i$ to $N_i$ for $i=0, 1,2,3$. We have by Theorem \ref{thm:distP} that $|z_0|^2+|z_1|^2=1$ on $b\mathbb{H}$ and so, $N_0^*N_0=I-N_1^*N_1$. 
		
		\medskip
		
		\noindent $(2) \implies (1).$ It follows from the above discussion as in the proof of the implication $(2) \implies (3)$ that $N_0$ is normal. Let $(a, x_1, x_2, x_3) \in \sigma_T(\underline{N})$. By the projection property of joint spectrum, we have that $(x_1, x_2, x_3) \in \sigma_T(N_1, N_2, N_3)$. Since $(N_1, N_2, N_3)$ is an $\E$-unitary, $(x_1, x_2, x_3)\in b\E$. Moreover, the map	
		$
		f(z_1, z_2, z_3, z_4)=|z_1|^2+|z_2|^2-1
		$
		is continuous on $\sigma_T(\underline{N})$. By continuous functional calculus, we must have 
		\[
		f(N_0, N_1, N_2, N_3)=N_0^*N_0+N_1^*N_1-I=0 \quad \text{and so,} \quad \{0\}=\sigma_T(f(\underline{N}))
		=f(\sigma_T(\underline{N})),			\]
		where the last equality follows from the spectral mapping principle. Therefore, $|a|^2+|x_1|^2=1$ and so, $\sigma_T(\underline{N}) \subseteq b\Q$. Hence, $\underline{N}$ is an  $\Q$-unitary. 
		
		\medskip		
		
		\noindent $(2) \implies (5).$ Define
		$
		A=[A_{ij}]_{i, j=1}^2=\begin{bmatrix}
			N_1 & -N_0^*N_3\\
			N_0 & N_2
		\end{bmatrix}.
		$
		Since $N_0, N_1, N_2, N_3$ are commuting normal operators, we have that $A_{ij}$ are commuting normal operators. We are also given that $(N_1, N_2, N_3)$ is an $\E$-unitary. By Theorem \ref{thm_E_unitary}, $N_3$ is a unitary, $N_1=N_2^*N_3$ and so, $N_2^*N_2=N_1^*N_1$. Then
		\begin{equation*}
			\begin{split}
				AA^*=\begin{bmatrix}
					N_1N_1^*+N_0^*N_3N_3^*N_0 & N_1N_0^*-N_0^*N_3N_2^* \\ \\
					N_0N_1^*-N_2N_3^*N_0 & N_0N_0^*+N_2N_2^* 
				\end{bmatrix}=\begin{bmatrix}
					N_1N_1^*+N_0^*N_0 & N_0^*(N_1-N_2^*N_3) \\ \\
					N_0(N_1^*-N_2N_3^*) & N_0^*N_0+N_1^*N_1 
				\end{bmatrix}=\begin{bmatrix}
					I & 0 \\
					0 & I\\
				\end{bmatrix}
			\end{split}
		\end{equation*}
		and 	
		\begin{equation*}
			\begin{split}
				A^*A=\begin{bmatrix}
					N_1^*N_1+N_0^*N_0 & -N_1^*N_0^*N_3+N_0^*N_2 \\ \\
					-N_3^*N_0N_1+N_2^*N_0 & N_3^*N_0N_0^*N_3+N_2^*N_2 
				\end{bmatrix}=\begin{bmatrix}
					N_1N_1^*+N_0^*N_0 & N_0^*(N_2-N_1^*N_3) \\ \\
					N_0(N_2^*-N_1N_3^*) & N_0^*N_0+N_1^*N_1 
				\end{bmatrix}=\begin{bmatrix}
					I & 0 \\
					0 & I\\
				\end{bmatrix}. 
			\end{split}
		\end{equation*}
		Evidently, $(A_{21}, A_{11}, A_{22})=(N_0, N_1, N_2)$ and we have that
		\[
		A_{11}A_{22}-A_{12}A_{21}=N_1N_2+N_0^*N_0N_3=N_2^*N_2N_3+N_0^*N_0N_3=\left(N_2^*N_2+N_0^*N_0\right)N_3=N_3.
		\]  
		Thus, $\underline{N}=(A_{21}, A_{11}, A_{22}, A_{11}A_{22}-A_{12}A_{21})$ and so, $(5)$ holds.
		
		\medskip
		
		\noindent $(5) \implies (2).$ Assume that $A$ is a unitary block matrix $[A_{ij}]_{i, j=1}^2$, where $A_{ij}$ are commuting normal operators such that $\underline{N}=(A_{21}, A_{11}, A_{22}, A_{11}A_{22}-A_{12}A_{21})$. Then $\|N_j\| \leq \|A_{jj}\| \leq 1$ for $j=1, 2$. The condition  $A^*A=I$ gives the following set of equations.
		\begin{equation}\label{eq1}
			A_{11}^*A_{11}+A_{21}^*A_{21}=I, \quad
			A_{12}^*A_{12}+A_{22}^*A_{22}=I, 
		\end{equation}
		\begin{equation}\label{eq2}
			A_{12}^*A_{11}+A_{22}^*A_{21}=0, \quad A_{11}^*A_{12}+A_{21}^*A_{22}=0.
		\end{equation}
		Again, $AA^*=I$ provides the following equations.
		\begin{equation}\label{eq3}
			A_{11}A_{11}^*+A_{12}A_{12}^*=I, \quad A_{21}A_{21}^*+A_{22}A_{22}^*=I,
		\end{equation}
		\begin{equation}\label{eq4}
			A_{21}A_{11}^*+A_{22}A_{12}^*=0, \quad 
			A_{11}A_{21}^*+A_{12}A_{22}^*=0 .
		\end{equation}
		Using the above equations, we have the following.
		\begin{equation*}
			\begin{split}
				N_1^*N_3
				=A_{11}^*(A_{11}A_{22}-A_{12}A_{21})
				&=(A_{11}^*A_{11})A_{22}-A_{11}^*A_{12}A_{21} \\
				& =(I-A_{21}^*A_{21})A_{22}-A_{11}^*A_{12}A_{21}  \quad [\text{ by } {(\ref{eq1})}] \ \\
				&=A_{22}-A_{22}A_{21}^*A_{21} -(A_{11}^*A_{12})A_{21}\\
				&=A_{22}-A_{22}A_{21}^*A_{21} +(A_{22}A_{21}^*)A_{21} \ \quad [\text{ by } {(\ref{eq2})}]\\
				&=N_2.\\
			\end{split}
		\end{equation*}
		To show that $N_3$ is unitary, we just need to check that $N_3^*N_3= I$, because $N_3$ is a normal operator.
		\begin{equation*}
			\begin{split}
				N_3^*N_3
				&=(A_{11}^*A_{22}^*-A_{12}^*A_{21}^*)(A_{11}A_{22}-A_{12}A_{21})\\
				&=A_{11}^*A_{11}A_{22}^*A_{22}-(A_{11}^*A_{12})A_{22}^*A_{21}-(A_{12}^*A_{11})A_{21}^*A_{22}+A_{12}^*A_{21}^*A_{12}A_{21}\\
				& =A_{11}^*A_{11}A_{22}^*A_{22}+(A_{21}^*A_{22})A_{22}^*A_{21}+(A_{22}^*A_{21})A_{21}^*A_{22}+A_{12}^*A_{21}^*A_{12}A_{21} \quad [\text{ by } (\ref{eq2})]\\
				&=(A_{11}^*A_{11}+A_{21}^*A_{21})A_{22}^*A_{22}+(A_{22}^*A_{22}+A_{12}^*A_{12})A_{21}^*A_{21}\\
				& =A_{22}^*A_{22}+A_{21}^*A_{21} \quad [\text{ by } (\ref{eq1})]\\
				& =I. \quad [\text{ by } (\ref{eq3})]
			\end{split}
		\end{equation*}
		Hence, $N_1, N_2, N_3$ are commuting  normal operators satisfying $\|N_2\| \leq 1, N_1^*N_3=N_2$ and $N_3^*N_3=N_3^*N_3=I$. It follows from Theorem \ref{thm_E_unitary} that $(N_1, N_2,N_3)$ is an $\E$-unitary. Moreover, we have by \eqref{eq1} that 
		$
		N_0^*N_0+N_1^*N_1=A_{21}^*A_{21}+A_{11}^*A_{11}=I.
		$
		The proof is now complete.
	\end{proof}
	
	\smallskip
	
	We show that the hypothesis of subnormality of $N_0^*$ in part (2) of Theorem \ref{thm_Q_unitary} cannot be dropped.
	
	\begin{eg} \label{exm:imp}
		Let $T_z$ be the unilateral shift on $\ell^2(\mathbb{N})$. Define $\underline{N}=(N_0, N_1, N_2, N_3)=(T_z, 0, 0, I)$ on $\ell^2(\mathbb{N})$. It is not difficult to see that $(0, 0, I)$ is an $\E$-unitary since $\sigma_T(0, 0, I)=\{(0, 0, 1)\} \subseteq b\E$. Moreover, $N_0^*N_0+N_1^*N_1=T_z^*T_z=I$. Thus, $\underline{N}$ satisfies the condition $(2)$ of Theorem \ref{thm_Q_unitary} except that $N_0^*$ is subnormal. Note that $(0, 0, 0, 1) \in \sigma_T(\underline{N})$ but $(0, 0, 0, 1) \notin b\Q$ by virtue of Theorem \ref{thm:distP}. Thus, $\underline{N}$ is not an $\Q$-unitary.  \qed 
	\end{eg}
	
	We also mention that Theorem \ref{thm_Q_unitary} fails if we do not assume that $(N_0, N_1)$ is a $\B_2$-unitary.

	\begin{eg}
		Let  $\theta \in \mathbb{R}$ and let
		$
		(a,x_1, x_2, x_3)=(0, 0, 0, e^{i\theta}).
		$
		Then $(0, 0, e^{i\theta}) \in b \E$. Moreover, 
		\[
		\sup_{z_1, z_2 \in \D}\left|\frac{a\sqrt{(1-|z_1|^2)(1-|z_2|^2)}}{1-x_1z_1-x_2z_2+x_3z_1z_2}\right|=0 <1
		\]
		and so, $(0, 0, 0, e^{i\theta}) \in \overline{\Q}$. However, $|a|^2+|x_1|^2 \ne 1$ and by Theorem \ref{thm:distP}, $(a, x_1, x_2, x_3) \notin b\Q$. Also, it shows that $
		b\Q \ne \{(a, x_1, x_2, x_3) \in \overline{\Q} \ : \ |x_3|=1 \}$. \qed 
	\end{eg}
	
The operator theory on the pentablock has been recently studied in \cite{JindalII, PalN}. We mention a characterization of $\Pe$-unitaries in this context. 
	
	\begin{thm}[\cite{PalN}, Theorem 5.2]\label{thm_P_uni}
	A commuting triple $\underline{N}=(N_1, N_2, N_3)$  of  operators is a $\mathbb{P}$-unitary if and only if $(N_1, N_2\slash 2)$ is a $\B_2$-unitary and $(N_2, N_3)$ is a $\Gamma$-unitary.
	\end{thm}

	The next result follows immediately from  Theorem \ref{thm_P_uni} and Theorem \ref{thm_Q_unitary}.

	\begin{cor}\label{cor_P_Q_uni}
		Let $A, X_3, S$ and $P$ be operators on a Hilbert space $\HS$. Then the following holds.
		\begin{enumerate}
			\item $(A, S, P)$ is a $\Pe$-unitary if and only if $(A, S\slash 2, S\slash 2, P)$ is an $\Q$-unitary.
			
			\item $(A, X_3)$ is a commuting pair of unitaries if and only if $(A, 0, 0, X_3)$ is an $\Q$-unitary.
		\end{enumerate}
	\end{cor}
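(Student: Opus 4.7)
The plan is to derive both parts of the corollary by invoking the characterization of $\Q$-unitaries in condition (4) of Theorem \ref{thm_Q_unitary}, which says that $(N_0, N_1, N_2, N_3)$ is an $\Q$-unitary precisely when $(N_0, N_1)$ is a $\B_2$-unitary and $(N_1, N_2, N_3)$ is an $\E$-unitary, and then comparing against Theorem \ref{thm_P_uni}. The routine ingredients are the scalar descriptions of $\Gamma$-unitaries (Theorem \ref{Gamma_uni}) and $\E$-unitaries (Theorem \ref{thm_E_unitary}), which will slot in to match the required conditions.

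For part (1), I would unwind both sides to the same pair of statements. Applying Theorem \ref{thm_P_uni}, $(A, S, P)$ is a $\Pe$-unitary iff $(A, S/2)$ is a $\B_2$-unitary and $(S, P)$ is a $\Gamma$-unitary. Applying part (4) of Theorem \ref{thm_Q_unitary}, the quadruple $(A, S/2, S/2, P)$ is an $\Q$-unitary iff $(A, S/2)$ is a $\B_2$-unitary and $(S/2, S/2, P)$ is an $\E$-unitary. The two equivalences share the $\B_2$-condition, so it remains to check that $(S, P)$ is a $\Gamma$-unitary iff $(S/2, S/2, P)$ is an $\E$-unitary. By Theorem \ref{Gamma_uni} the former is equivalent to $P$ unitary, $S = S^*P$, and $\|S\| \leq 2$; by Theorem \ref{thm_E_unitary} the latter is equivalent to $P$ unitary, $S/2 = (S/2)^* P$ (i.e.\ $S = S^*P$), and $\|S/2\| \leq 1$. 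These are plainly the same conditions.

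For part (2), I would again reduce via Theorem \ref{thm_Q_unitary}(4): $(A, 0, 0, X_3)$ is an $\Q$-unitary iff $(A, 0)$ is a $\B_2$-unitary and $(0, 0, X_3)$ is an $\E$-unitary. The first condition simplifies by Theorem \ref{prop3.2} (or directly) to $A^*A = I$ with $A$ normal, i.e.\ $A$ unitary. The second condition, by Theorem \ref{thm_E_unitary}, collapses to $X_3$ being unitary (the relations $0 = 0^* X_3$ and $\|0\| \leq 1$ are automatic). Commutativity of $A$ and $X_3$ is built into the definition of the operator quadruple, so the two sides match.

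I do not anticipate any substantive obstacle; the whole argument is a direct bookkeeping exercise once one observes that the $\Pe$-unitary characterization in Theorem \ref{thm_P_uni} decomposes along exactly the same $\B_2$/tetrablock split as the one supplied by Theorem \ref{thm_Q_unitary}(4), with the $\E$-unitary on $(S/2, S/2, P)$ encoding precisely the $\Gamma$-unitary conditions on $(S, P)$. The only mild point worth being careful about is not to forget the normality requirement when translating $(A, 0)$ being a $\B_2$-unitary into $A$ being a unitary, but this is immediate from the definition of $\B_2$-unitary via Theorem \ref{prop3.2}.
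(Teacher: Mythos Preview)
Your proposal is correct and follows essentially the same approach as the paper, which states that the corollary ``follows immediately from Theorem \ref{thm_P_uni} and Theorem \ref{thm_Q_unitary}.'' You have simply unpacked the details, in particular the routine verification (via Theorems \ref{Gamma_uni} and \ref{thm_E_unitary}) that $(S,P)$ is a $\Gamma$-unitary iff $(S/2,S/2,P)$ is an $\E$-unitary, which is implicit in the paper's one-line justification.
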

	
	The above corollary is an analogue of parts (3) and (5) of Theorem \ref{thm_connectII} at the level of unitaries. However, the remaining parts do not find a similar analogue. For example, $(I,0)$ is a $\B_2$-unitary but $(I, 0, 0, 0)$ is not an $\Q$-unitary showing that an analogue of part (1) of Theorem \ref{thm_connectII} does not hold here. Also, $(0, 0, I)$ is an $\E$-unitary, $(0, I)$ is a $\Gamma$-isometry but $(0, 0, 0, I)$ is not an $\Q$-unitary which shows the failure of analogues of parts (2) and (4) of Theorem \ref{thm_connectII}. This gives rise to a natural question if one can characterize $\B_2$-unitaries, $\E$-unitaries and $\Gamma$-unitaries in terms of $\Q$-unitaries. It is equivalent to asking if one construct an $\mathbb{H}$-unitary from a given $\B_2$-unitary\slash $\Gamma$-unitary\slash $\E$-unitary. The rest of the section does answer this only.

	\smallskip 
	
	Before proceeding further, we recall the polar decomposition of normal operators. Let $N$  be a normal operator on a Hilbert space $\mathcal{H}$. Then there exists a unitary operator $U$ on $\mathcal{H}$ such that $N=U|N|=|N|U$ and $U$ commutes with any operator that commutes with $N$. Recall that $|N|$ denotes the operator $(N^*N)^{1\slash 2}$.   For further details, one can refer to Theorem 12.35 in \cite{Rudin}.
	
	\begin{prop}
		Let $A, X_1$ be operators on a Hilbert space $\HS$. Then the following are equivalent:
		\begin{enumerate}
			\item $(A, X_1)$ is a $\B_2$-unitary;
			\item $(A, X_1, X_1^*, I)$ is an $\Q$-unitary;
			\item $(A, X_1, |X_1|, U)$ is an $\Q$-unitary, where $X_1=|X_1|U$ is the polar decomposition of $X_1$.  
		\end{enumerate}
	\end{prop}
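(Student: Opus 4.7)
The plan is to reduce the entire statement to the equivalence $(1)\iff(4)$ in Theorem~\ref{thm_Q_unitary}, which asserts that a commuting quadruple $(N_0,N_1,N_2,N_3)$ is an $\Q$-unitary if and only if $(N_0,N_1)$ is a $\B_2$-unitary and $(N_1,N_2,N_3)$ is an $\E$-unitary. With that in hand, the implications $(2)\Rightarrow(1)$ and $(3)\Rightarrow(1)$ are immediate: just read off the first two components. The remaining directions $(1)\Rightarrow(2)$ and $(1)\Rightarrow(3)$ both reduce to verifying that an appropriate triple with last two entries prescribed is an $\E$-unitary, and I will do this via Theorem~\ref{thm_E_unitary}(3), which says $(Y_1,Y_2,Y_3)$ is an $\E$-unitary iff $Y_3$ is unitary, $Y_1=Y_2^*Y_3$, and $\|Y_1\|\le 1$.

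For $(1)\Rightarrow(2)$ I set $(Y_1,Y_2,Y_3)=(X_1,X_1^*,I)$. Each of the three conditions of Theorem~\ref{thm_E_unitary}(3) is transparent: $I$ is unitary, $X_1=(X_1^*)^*I=X_1$, and $\|X_1\|\le 1$ follows from the spherical-unitary identity $A^*A+X_1^*X_1=I$ supplied by Theorem~\ref{prop3.2}. The pairwise commutation relations among $A,X_1,X_1^*,I$ are automatic once $X_1$ is normal and $[A,X_1]=0$; the only non-trivial relation $[A,X_1^*]=0$ is furnished by Fuglede's theorem. Thus $(A,X_1,X_1^*,I)$ meets the hypotheses of Theorem~\ref{thm_Q_unitary}(4), giving $(2)$.

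For $(1)\Rightarrow(3)$, the polar decomposition recalled in the paragraph preceding the proposition gives a unitary $U$ with $X_1=|X_1|U=U|X_1|$, where $U$ commutes with every operator commuting with $X_1$. In particular $[A,U]=0$, and since Fuglede gives $[A,X_1^*]=0$, continuous functional calculus applied to the positive operator $X_1^*X_1$ yields $[A,|X_1|]=0$ as well. Now with $(Y_1,Y_2,Y_3)=(X_1,|X_1|,U)$ the three clauses of Theorem~\ref{thm_E_unitary}(3) all hold: $U$ is unitary by construction, $X_1=|X_1|^*U=|X_1|U$ because $|X_1|$ is self-adjoint, and $\||X_1|\|=\|X_1\|\le 1$ exactly as in the previous paragraph. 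Theorem~\ref{thm_Q_unitary}(4) then delivers $(3)$.

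The only subtlety across the whole argument is the simultaneous upkeep of normality and pairwise commutativity in the quadruples under consideration, which is the main potential obstacle; this is bundled into a single appeal to Fuglede--Putnam together with the commutation clause of the polar decomposition for a normal operator. Everything else is a direct application of the two characterization theorems Theorem~\ref{thm_Q_unitary} and Theorem~\ref{thm_E_unitary} that are already at our disposal.
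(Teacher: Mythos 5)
Your proposal is correct and follows essentially the same route as the paper: both directions reduce to the characterization of $\Q$-unitaries via the pair $(\B_2$-unitary, $\E$-unitary$)$ in Theorem \ref{thm_Q_unitary}, with Theorem \ref{thm_E_unitary}(3) used to verify that $(X_1,X_1^*,I)$ and $(X_1,|X_1|,U)$ are $\E$-unitaries. Your explicit bookkeeping of the commutation relations via Fuglede and the polar decomposition is exactly the (more tersely stated) content of the paper's argument.
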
	
	
	\begin{proof}
		The equivalence of $(1)$ and $(2)$ follows directly from Theorem \ref{thm_Q_unitary} and Theorem \ref{thm_E_unitary}. Also, $(3) \implies (1)$ follows from Theorem \ref{thm_Q_unitary}. We prove $(1) \implies (3)$. Let $(A, X_1)$ be a $\B_2$-unitary. Then $A$ and $X_1$ are commuting normal operators and so, $(A, X_1, |X_1|, U)$ is a commuting quadruple of normal contractions. By Theorem \ref{thm_E_unitary}, $(X_1, |X_1|, U)$ is an $\E$-unitary and so, by Theorem \ref{thm_Q_unitary}, $(A, X_1, |X_1|, U)$ is an $\Q$-unitary. The proof is now complete. 
	\end{proof}

	\begin{prop}\label{prop_BE_uni}
		An operator triple $(N_1, N_2, N_3)$ is an $\E$-unitary if and only if $(D_{N_1}, N_1, N_2, N_3)$ is an $\Q$-unitary.
	\end{prop}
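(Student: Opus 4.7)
The plan is to deduce both directions directly from the equivalence of parts (1) and (3) in Theorem \ref{thm_Q_unitary}, which states that $(N_0, N_1, N_2, N_3)$ is an $\mathbb{H}$-unitary if and only if $(N_1, N_2, N_3)$ is an $\mathbb{E}$-unitary together with $N_0^*N_0 = I - N_1^*N_1$ and $N_0 N_0^* = I - N_1 N_1^*$. So the entire task reduces to verifying that the choice $N_0 = D_{N_1}$ satisfies these three conditions when $(N_1, N_2, N_3)$ is an $\mathbb{E}$-unitary, and conversely that condition (3) already gives us the $\mathbb{E}$-unitary structure.

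For the forward direction, I would first use Theorem \ref{thm_E_unitary} to extract that $N_1, N_2, N_3$ are commuting normal operators with $N_3$ unitary, $N_1 = N_2^*N_3$, and $\|N_1\| \leq 1$. This ensures $D_{N_1} = (I - N_1^*N_1)^{1/2}$ is a well-defined positive (hence normal) operator. Since $D_{N_1}$ lies in the $C^*$-algebra generated by $N_1^*N_1$, it commutes with every operator that commutes with both $N_1$ and $N_1^*$; in particular, because the normals $N_1, N_2, N_3$ form a commuting family, $D_{N_1}$ commutes with $N_1, N_2, N_3$. By construction, $D_{N_1}^*D_{N_1} = D_{N_1}^2 = I - N_1^*N_1$, and normality of $N_1$ gives $N_1^*N_1 = N_1 N_1^*$, so $D_{N_1} D_{N_1}^* = I - N_1 N_1^*$ as well. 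Theorem \ref{thm_Q_unitary} part (3) then yields that $(D_{N_1}, N_1, N_2, N_3)$ is an $\mathbb{H}$-unitary.

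For the converse, if $(D_{N_1}, N_1, N_2, N_3)$ is an $\mathbb{H}$-unitary, then (implicit in this hypothesis is that $\|N_1\| \leq 1$, so that $D_{N_1}$ is defined) Theorem \ref{thm_Q_unitary} immediately guarantees, as one of its component conclusions, that $(N_1, N_2, N_3)$ is an $\mathbb{E}$-unitary.

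There is no real obstacle: the only subtlety is confirming that $D_{N_1}$ commutes with $N_2$ and $N_3$, which is a routine consequence of continuous functional calculus applied inside the commutative $C^*$-algebra generated by the normal commuting family $\{N_1, N_2, N_3\}$. Everything else is a direct citation of the characterization already proved in Theorem \ref{thm_Q_unitary}.
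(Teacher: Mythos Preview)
Your proposal is correct and follows essentially the same route as the paper: both directions are reduced to Theorem~\ref{thm_Q_unitary}, with the forward direction relying on the fact that commuting normal operators doubly commute (the paper cites Fuglede explicitly, you phrase it via the commutative $C^*$-algebra they generate) so that $D_{N_1}$ commutes with $N_1,N_2,N_3$, and then one simply checks $D_{N_1}^*D_{N_1}+N_1^*N_1=I$.
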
	
	
	\begin{proof}
		Let $(N_1, N_2, N_3)$ be an $\E$-unitary. By Theorem \ref{thm_E_unitary}, $\|N_1\|, \|N_2\| \leq 1$ and $N_1^*N_1=N_2^*N_2$. Since $N_1, N_2, N_3$ are commuting normal operators, we have by Fuglede's theorem \cite{Fuglede} that $N_1, N_2, N_3$ doubly commute with each other. Thus, $D_{N_1}$ commutes with $N_1, N_2$ and $N_3$. Consequently, the quadruple $(D_{N_1}, N_1, N_2, N_3)$ consists of commuting normal operators such that $(N_1, N_2, N_3)$ is an $\E$-unitary and 
		$
		D_{N_1}^*D_{N_1}+N_1^*N_1=D_{N_1}^2+N_1^*N_1=I.
		$
		We have by Theorem \ref{thm_Q_unitary} that $(D_{N_1}, N_1, N_2, N_3)$ is an $\Q$-unitary. The converse follows from Theorem \ref{thm_Q_unitary}.
	\end{proof}
	
	One can use the polar decomposition for normal operators and generalize the above proposition. This provides another characterization for an $\Q$-unitary.

	\begin{thm}
		Let $\underline{N}=(N_0, N_1, N_2, N_3)$ be a commuting quadruple of operators acting on a Hilbert space $\mathcal{H}$. Then $\underline{N}$ is an $\mathbb{H}$-unitary if and only if $(N_1, N_2, N_3)$ is an $\E$-unitary and there is a unitary $U$ on $\mathcal{H}$ such that $UN_j=N_jU$ for $j=1, 2, 3$ and 
		$
		N_0=UD_{N_1}=D_{N_1}U.
		$
	\end{thm}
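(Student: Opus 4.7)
The plan is to reduce both directions to Theorem \ref{thm_Q_unitary}, using the polar decomposition of a normal operator (as recalled just before the statement).

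For the forward direction, assume $\underline{N}$ is an $\Q$-unitary. By the equivalence $(1) \Leftrightarrow (3)$ in Theorem \ref{thm_Q_unitary}, the triple $(N_1, N_2, N_3)$ is an $\E$-unitary and
\[
N_0^*N_0 = I - N_1^*N_1 = D_{N_1}^2, \qquad N_0 N_0^* = I - N_1 N_1^*.
\]
Taking positive square roots gives $|N_0| = D_{N_1}$. Since $N_0$ is normal, the polar decomposition recalled above produces a unitary $U$ on $\mathcal{H}$ such that
\[
N_0 = U |N_0| = |N_0| U = U D_{N_1} = D_{N_1} U,
\]
and $U$ commutes with every operator that commutes with $N_0$. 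As $N_1, N_2, N_3$ all commute with $N_0$ by assumption, $U$ commutes with each of them, which is exactly the required statement.

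For the converse, assume $(N_1, N_2, N_3)$ is an $\E$-unitary and that a unitary $U$ with the stated properties exists. I first check that $(N_0, N_1, N_2, N_3)$ is a commuting quadruple of normal operators. Since $N_1$ is normal, $D_{N_1} = (I - N_1^*N_1)^{1/2}$ is a function of $N_1^*N_1$ and hence commutes with $N_1$. By Fuglede's theorem, the commuting normals $N_1, N_2, N_3$ doubly commute, so $N_1^*$ commutes with $N_2$ and $N_3$; consequently $D_{N_1}$ commutes with $N_2$ and $N_3$ as well. Combining this with the hypothesis that $U$ commutes with $N_1, N_2, N_3$, one sees that $N_0 = U D_{N_1}$ commutes with each $N_j$. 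Moreover,
\[
N_0^* N_0 = D_{N_1} U^* U D_{N_1} = D_{N_1}^2, \qquad N_0 N_0^* = U D_{N_1}^2 U^* = D_{N_1}^2,
\]
using $U D_{N_1} = D_{N_1} U$; hence $N_0$ is normal.

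Finally, since $N_1$ is normal, $D_{N_1}^2 = I - N_1^*N_1 = I - N_1 N_1^*$. Thus
\[
N_0^* N_0 = I - N_1^* N_1 \quad \text{and} \quad N_0 N_0^* = I - N_1 N_1^*,
\]
and $(N_1, N_2, N_3)$ is an $\E$-unitary by assumption. The implication $(3) \Rightarrow (1)$ of Theorem \ref{thm_Q_unitary} now yields that $\underline{N}$ is an $\Q$-unitary. The only mildly delicate point in the whole argument is ensuring that the polar decomposition of the normal operator $N_0$ produces a genuine unitary (rather than merely a partial isometry) that commutes with $N_1, N_2, N_3$, but this is handled verbatim by the version of polar decomposition for normal operators recalled just before the statement.
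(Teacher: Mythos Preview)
Your proof is correct and follows essentially the same approach as the paper: both directions are reduced to Theorem \ref{thm_Q_unitary} via the polar decomposition of the normal operator $N_0$, with Fuglede's theorem handling the needed commutation relations. The only cosmetic difference is that in the converse the paper re-derives $UD_{N_1}=D_{N_1}U$ from $UN_j=N_jU$ (via Fuglede and Lemma \ref{lem_basic}) rather than taking it as given, but your more direct reading of the hypothesis is equally valid.
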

	
	\begin{proof}
		Assume that $(N_1, N_2, N_3)$ is an $\E$-unitary and $U$ is a unitary on $\mathcal{H}$ such that $UN_j=N_jU$ for $j=1, 2, 3$. By Fuglede's theorem,  $UN_j^*=N_j^*U$ for $j=1, 2, 3$ and so, $UD_{N_1}^2=D_{N_1}^2U$. By Lemma \ref{lem_basic}, we have that $UD_{N_1}=D_{N_1}U$. Take $N_0=UD_{N_1}$. It is easy to see that $N_0, N_1, N_2, N_3$ are commuting normal operators and $N_0^*N_0+N_1^*N_1=I$. We have by Theorem \ref{thm_Q_unitary} that $(N_0, N_1, N_2, N_3)$ is an $\Q$-unitary. Conversely, suppose that $(N_0, N_1, N_2, N_3)$ is an $\Q$-unitary.  It follows from Theorem \ref{thm_Q_unitary} that $(N_1, N_2, N_3)$ is an $\E$-unitary and $N_0^*N_0=I-N_1^*N_1=D_{N_1}^2$.  So, $(N_0^*N_0)^{1\slash 2}=D_{N_1}$.
		The polar decomposition theorem (see the discussion after Corollary \ref{cor_P_Q_uni}) ensures the existence of a unitary $U$ on $\mathcal{H}$ such that $UN_j=N_jU$ for $j=1, 2, 3$ and 	
		$
		N_0=U(N_0^*N_0)^{1\slash 2}=(N_0^*N_0)^{1\slash 2}U.
		$
		Therefore, $N_0=UD_{N_1}=D_{N_1}U$ which completes the proof.
	\end{proof}

	\begin{cor}
		An operator $(S, P)$ is a $\Gamma$-unitary if and only if $(D_{S\slash 2}, S\slash 2, S \slash 2, P)$ is an $\Q$-unitary.
	\end{cor}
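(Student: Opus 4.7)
The plan is to deduce this corollary from Proposition \ref{prop_BE_uni}, specialized to the triple $(S/2, S/2, P)$, together with the known scalar characterizations of $\Gamma$-unitaries and $\mathbb{E}$-unitaries already recorded in the preliminaries.

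First I would set $N_1 = N_2 = S/2$ and $N_3 = P$ in Proposition \ref{prop_BE_uni}. Since $S$ and $P$ commute, the components of $(S/2, S/2, P)$ commute, so the proposition applies directly and yields the equivalence that $(D_{S/2}, S/2, S/2, P)$ is an $\Q$-unitary if and only if $(S/2, S/2, P)$ is an $\mathbb{E}$-unitary.

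Next I would translate the $\mathbb{E}$-unitary condition on $(S/2, S/2, P)$ into the $\Gamma$-unitary condition on $(S, P)$. By Theorem \ref{thm_E_unitary}(3), the triple $(S/2, S/2, P)$ is an $\mathbb{E}$-unitary precisely when $P$ is unitary, $S/2 = (S/2)^* P$ and $\|S/2\| \leq 1$. Clearing the factor of $1/2$ in the middle equation and in the norm inequality recasts these conditions as $P$ unitary, $S = S^* P$ and $\|S\| \leq 2$, which by Theorem \ref{Gamma_uni}(1) is exactly the characterization of $(S, P)$ as a $\Gamma$-unitary. Chaining the two equivalences delivers the corollary.

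There is no real obstacle here, as both reductions are direct applications of results already established in Sections \ref{sec_prelims} and \ref{sec_Q_uni}; the only point worth explicitly verifying is that the three $\mathbb{E}$-unitary conditions align one-to-one with the three $\Gamma$-unitary conditions after the rescaling $S \mapsto S/2$, which is transparent.
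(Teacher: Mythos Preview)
Your proof is correct and follows essentially the same route as the paper: first reduce via Proposition \ref{prop_BE_uni} to the equivalence between $(D_{S/2}, S/2, S/2, P)$ being an $\Q$-unitary and $(S/2, S/2, P)$ being an $\E$-unitary, and then match the $\E$-unitary criteria of Theorem \ref{thm_E_unitary}(3) with the $\Gamma$-unitary criteria of Theorem \ref{Gamma_uni}(1) under the rescaling $S \mapsto S/2$. The paper's proof is the same, only more tersely phrased.
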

	
	\begin{proof}
		We have by Theorem 2.2 in \cite{AglerVII} that a commuting pair $(S, P)$ of normal operators is a $\Gamma$-unitary if and only if $S=S^*P$, $P$ is a unitary and $\|S\| \leq 2$. It follows from Theorem \ref{thm_E_unitary} that $(S\slash 2, S\slash 2, P)$ is an $\E$-unitary if and only if $(S, P)$ is a $\Gamma$-unitary. The desired conclusion now follows from Proposition \ref{prop_BE_uni}.
	\end{proof}

	\section{The hexablock isometries}\label{sec_Q_iso}
	
	\vspace{0.2cm}
	
	\noindent In this section, we explore the structure of an $\Q$-isometry and identify various ways to characterize them. By definition, an $\Q$-isometry is the restriction of an $\Q$-unitary $(A, X_1, X_2, X_3)$ to a joint invariant subspace for $A, X_1, X_2$ and $X_3$. Consequently, an $\Q$-isometry is a subnormal quadruple. Note that a commuting tuple of operators $\UT=(T_1, \dotsc, T_n)$ on a Hilbert space $\mathcal{H}$ is said to be \textit{subnormal} if there is a Hilbert space $\mathcal{K} \supseteq \mathcal{H}$ and a commuting tuple of normal operators $\underline{N}=(N_1, \dotsc, N_n)$ on $\mathcal{K}$ such that $\mathcal{H}$ is a joint invariant subspace for  $N_1, \dotsc, N_n$ and 
	\[
	(T_1, \dotsc, T_n)=(N_1|_{\mathcal{H}}, \dotsc, N_n|_{\mathcal{H}}).
	\]
	The tuple $\underline{N}$ is referred to as a normal extension of $\underline{T}$. It follows from the theory of subnormal operators (see \cite{Lubin, Athavale}) that every subnormal tuple has a minimal normal extension to the space 
	\[
	\overline{span}\left\{N_1^{*k_1} \dotsc N_n^{*k_n}h \ : \ h \in \mathcal{H} \ \& \ \ k_1, \dotsc, k_n \in \N \cup \{0\} \right\},
	\]
	and this minimal normal extension is unique up to a unitary equivalence. We mention a few useful results from the literature about subnormal tuples.

	\begin{lem}[\cite{Lubin}, Corollary 2]\label{Lubin2}
		Let $\underline{N}=(N_1, \dotsc, N_n)$ be the minimal normal extension of a subnormal tuple $\underline{S}=(S_1, \dotsc, S_n)$. Then $p(\underline{N})$ is unitarily equivalent to the minimal normal extension of $p(\underline{S})$ for all $p \in \C[z_1, \dotsc, z_n]$.
	\end{lem}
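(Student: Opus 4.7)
The plan is to proceed in three steps: confirm that $p(\underline{N})$ is normal, verify that it is a normal extension of $p(\underline{S})$, and then identify the minimal normal extension up to unitary equivalence.

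First I would check that $p(\underline{N})$ is a normal operator on $\mathcal{K}$. Since $N_1, \dotsc, N_n$ form a commuting normal tuple, Fuglede's theorem ensures that each $N_i^*$ commutes with every $N_j$, so $p(\underline{N})$ commutes with its adjoint $p(\underline{N})^{*}=\sum \overline{c_\alpha}\,\underline{N}^{*\alpha}$. Equivalently, the joint spectral theorem realizes $\underline{N}$ as multiplication by the coordinate functions on an $L^2(\sigma_T(\underline{N}),\mu)$, which makes $p(\underline{N})$ multiplication by the scalar function $p$, plainly normal.

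Next I would verify that $p(\underline{N})$ extends $p(\underline{S})$. For $h \in \mathcal{H}$ and any multi-index $\alpha$, the invariance of $\mathcal{H}$ under each $N_i$ together with $N_i|_{\mathcal{H}}=S_i$ yields $\underline{N}^{\alpha}h=\underline{S}^{\alpha}h \in \mathcal{H}$. Summing over the monomials of $p$ then gives $p(\underline{N})|_{\mathcal{H}}=p(\underline{S})$, so $p(\underline{S})$ is subnormal with $p(\underline{N})$ as a normal extension.

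Finally I would identify the minimal normal extension. Set
\[
\mathcal{M}:=\overline{\operatorname{span}}\{p(\underline{N})^{*k}h \,:\, h \in \mathcal{H},\, k \in \N \cup\{0\}\}.
\]
This subspace is invariant under $p(\underline{N})^{*}$ by construction, and also under $p(\underline{N})$, because $p(\underline{N}) \cdot p(\underline{N})^{*k}h=p(\underline{N})^{*k}\cdot p(\underline{S})h \in \mathcal{M}$ by Fuglede and the inclusion $p(\underline{S})h \in \mathcal{H} \subseteq \mathcal{M}$. Hence $\mathcal{M}$ reduces $p(\underline{N})$, and the restriction $p(\underline{N})|_{\mathcal{M}}$ is the minimal normal extension of $p(\underline{S})$ by the description of minimal normal extensions recalled in the excerpt. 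The main obstacle is then to upgrade this to the asserted unitary equivalence between $p(\underline{N})$ acting on the whole space $\mathcal{K}$ and the minimal normal extension, i.e.\ to establish $\mathcal{M}=\mathcal{K}$. I would derive this from the minimality of $\underline{N}$, namely $\mathcal{K}=\overline{\operatorname{span}}\{\underline{N}^{*\alpha}h \,:\, h \in \mathcal{H},\, \alpha \in (\N \cup\{0\})^n\}$, by transporting $\underline{N}$ to a multiplication tuple on $L^2(\sigma_T(\underline{N}),\mu)$ via the joint spectral theorem and applying a Stone--Weierstrass density argument on $\sigma_T(\underline{N})$ to show that the closed span generated by polynomials in $p, \bar{p}$ acting on $\mathcal{H}$ coincides with the closed span generated by polynomials in $\underline{z},\bar{\underline{z}}$ acting on $\mathcal{H}$.
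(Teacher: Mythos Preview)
The paper does not supply its own proof of this lemma; it is quoted from \cite{Lubin}, Corollary 2, without argument. So there is no in-paper proof to compare against, and I assess your proposal on its own merits.

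Your Steps 1 and 2 are fine. The gap is in Step 3: the route through $\mathcal{M}=\mathcal{K}$ via Stone--Weierstrass cannot succeed in general. A Stone--Weierstrass argument on $C(\sigma_T(\underline N))$ would require the $*$-algebra generated by $p$ and the constants to separate points of $\sigma_T(\underline N)$, i.e.\ would require $p$ to be injective there, and nothing in the hypotheses forces that. More decisively, the equality $\mathcal{M}=\mathcal{K}$ is simply false in general. Take $n=1$, $S=T_z$ on $H^2(\mathbb T)$ with minimal normal extension $N=M_z$ on $L^2(\mathbb T)$, and let $p$ be a nonzero constant. Then $p(N)^{*k}h=\bar p^{\,k}h\in H^2$ for every $h\in H^2$, so $\mathcal{M}=H^2\subsetneq L^2(\mathbb T)=\mathcal{K}$. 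The lemma nevertheless holds in this instance, because $p(N)=pI_{\mathcal K}$ and the minimal normal extension of $pI_{\mathcal H}$ is $pI_{\mathcal H}$ itself, and these two scalar operators are unitarily equivalent since $\dim\mathcal K=\dim\mathcal H$; but that is a cardinality statement, not a consequence of $\mathcal{M}=\mathcal{K}$. What is actually needed is a comparison of the spectral multiplicity data of $p(\underline N)$ on $\mathcal{K}$ with that of $p(\underline N)|_{\mathcal M}$, and Lubin's argument in \cite{Lubin} proceeds along such spectral-theoretic lines rather than through a function-algebra density claim.
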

	
	The following result due to Athavale \cite{AthavaleIII} gives a sufficient condition for the subnormality of commuting operator tuples. This result was also proved independently by Arveson \cite{ArvesonIII}.
	
	\begin{lem}[\cite{ArvesonIII}, Corollary 1]\label{subnormal}
		Let $T_1, \dotsc, T_n$ be a set of commuting operators on a Hilbert space $\mathcal{H}$ such that $T_1^*T_1+\dotsc+T_n^*T_n=I$. Then $(T_1, \dotsc, T_n)$ is a subnormal tuple. 
	\end{lem}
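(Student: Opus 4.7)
My plan is to invoke the preceding Athavale moment criterion (Lemma~\ref{Athavale}), which characterizes subnormality of a commuting tuple of contractions $(T_1, \dotsc, T_n)$ as the positivity, for every multi-index $k = (k_1, \dotsc, k_n) \in (\mathbb{N} \cup \{0\})^n$, of
\[
P_k := \sum_{\substack{0 \leq p_i \leq k_i \\ 1 \leq i \leq n}} (-1)^{p_1 + \dotsc + p_n} \binom{k_1}{p_1} \dotsc \binom{k_n}{p_n}\, T_1^{*p_1} \dotsc T_n^{*p_n} T_1^{p_1} \dotsc T_n^{p_n}.
\]
Each $T_i$ is automatically a contraction under the hypothesis, since $T_i^*T_i = I - \sum_{j \neq i} T_j^*T_j \leq I$, so the criterion applies. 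The task thus reduces to verifying $P_k \geq 0$ for every $k$, using only $\sum_i T_i^*T_i = I$ together with $T_iT_j = T_jT_i$ (whence also $T_i^*T_j^* = T_j^*T_i^*$).

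The base case $|k|=1$ is immediate: $P_{e_j} = I - T_j^*T_j = \sum_{i \neq j} T_i^*T_i \geq 0$, a sum of positive operators. For the inductive step I would first establish the auxiliary multivariate moment identity
\[
\sum_{|p| = m} \frac{m!}{p_1! \dotsc p_n!}\, T_1^{*p_1} \dotsc T_n^{*p_n} T_1^{p_1} \dotsc T_n^{p_n} = I \qquad (m \geq 0),
\]
by induction on $m$: applying the unital completely positive map $\Phi(X) = \sum_i T_i^*XT_i$ — which satisfies $\Phi(I) = I$ by the spherical isometry hypothesis — to the $m$-th instance, and then reordering factors via the commutativity of the $T_i$'s (and of their adjoints among themselves) produces the $(m{+}1)$-th instance, with the correct multinomial coefficient arising from the combinatorial identity $\sum_{j:q_j \geq 1} q_j = |q| = m+1$. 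This operator identity plays, at the operator level, the role of the scalar constraint $|z_1|^2 + \dotsc + |z_n|^2 = 1$ on the distinguished boundary $b\mathbb{B}_n = \partial \mathbb{B}_n$.

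With the moment identity in hand, the positivity of $P_k$ is established by a second induction on $|k|$, recursively rewriting $P_k$ as $\sum_\alpha T^{*\alpha} B_\alpha T^\alpha$ with each $B_\alpha$ either a lower-order $P_{k'}$ (positive by the induction hypothesis) or a positive operator supplied by the moment identity and the hypothesis $\sum_i T_i^*T_i = I$. Applying Athavale's criterion then gives subnormality.

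The main obstacle is the combinatorial bookkeeping in this second induction. The scalar analogue of $P_k$ is $\prod_{i=1}^n (1-|z_i|^2)^{k_i}$, nonnegative on $\overline{\mathbb{B}}_n$ by a trivial factorization; however, its operator counterpart does not factor because $T_i^*$ need not commute with $T_j$. The moment identity is precisely what compensates for this non-factorizability, absorbing the otherwise unmanageable cross terms and allowing the alternating sum to be reorganized as a sum of positive operators.
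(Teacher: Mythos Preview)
The paper does not prove this lemma; it is quoted from the literature (Athavale, and independently Arveson) without argument, so there is no ``paper's proof'' to compare against. Your plan to deduce it from Athavale's moment criterion (Lemma~\ref{Athavale}) is therefore a genuine addition rather than a reproduction, and the overall strategy is sound: each $T_i$ is a contraction, the moment identity $\sum_{|p|=m}\tfrac{m!}{p!}\,T^{*p}T^{p}=I$ is correct and proved exactly as you describe, and the positivity of every $P_k$ does follow.

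That said, the step you flag as ``the main obstacle'' can be carried out directly, without invoking the full moment identity. Writing $\Delta_j(X)=X-T_j^*XT_j$, one has $P_k=\Delta_1^{k_1}\cdots\Delta_n^{k_n}(I)$. Only two facts are needed: (i) $\Delta_\ell(I)=\sum_{j\neq\ell}T_j^*T_j$, immediate from the hypothesis $\sum_j T_j^*T_j=I$; and (ii) $\Delta_m(T_j^*XT_j)=T_j^*\Delta_m(X)T_j$ for every $X$, a one-line consequence of $T_jT_m=T_mT_j$ (and hence $T_j^*T_m^*=T_m^*T_j^*$). Iterating (i) through (ii) yields the closed formula
\[
P_k\;=\;\sum_{j_1\neq i_1,\;\dots,\;j_N\neq i_N}\bigl(T_{j_N}\cdots T_{j_1}\bigr)^*\bigl(T_{j_N}\cdots T_{j_1}\bigr)\;\geq\;0,
\]
where $N=|k|$ and $(i_1,\dots,i_N)$ is any listing of the indices with multiplicities $(k_1,\dots,k_n)$. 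This replaces your second induction by an explicit sum of positives, and shows that the moment identity you prove, while true, is stronger than what the argument actually requires.
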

We will use the above results on subnormal operators from the literature to arrive at the results in this section. Our first main result of this section is a characterization theorem for $\Q$-isometries. For this purpose, we present the following description of $\E$-isometries. 
	
		\begin{thm}[\cite{Bhattacharyya-01}, Theorem 5.7]\label{thm_E_isometry}
		Let $(V_1, V_2, V_3)$ be a commuting triple of operators acting on a Hilbert space $\mathcal{H}$. Then the following statements are equivalent:
		
		\begin{enumerate}
			\item $(V_1, V_2, V_3)$ is an $\E$-isometry;
			\item $(V_1, V_2, V_3)$ is an $\E$-contraction and $V_3$ is an isometry;		
			\item $V_3$ is an isometry, $V_1=V_2^*V_3$ and $\|V_2\| \leq 1$;
			\item $V_3$ is an isometry, the spectral radii $r(V_1) \leq 1, r(V_2) \leq 1$ and $V_1=V_2^*V_3$.
		\end{enumerate}
	\end{thm}

	\begin{thm}\label{thm_Q_isometry}
		Let $(V_0, V_1, V_2, V_3)$ be a commuting quadruple of operators acting on a Hilbert space $\mathcal{H}$. Then the following are equivalent:
		\begin{enumerate}
			\item $(V_0, V_1, V_2, V_3)$ is an $\Q$-isometry;

			\item $(V_0, V_1)$ is a $\B_2$-isometry and $(V_1, V_2, V_3)$ is an $\E$-isometry;
			
			\item 	$(V_0, V_2)$ is a $\B_2$-isometry and $(V_1, V_2, V_3)$ is an $\E$-isometry;
			
			\item $(V_0, V_1)$ and $(V_0, V_2)$ are $\B_2$-isometries and $(V_1, V_2, V_3)$ is an $\E$-isometry.

		\end{enumerate}
	\end{thm}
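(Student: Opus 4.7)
The plan is to first prove $(2) \Leftrightarrow (3) \Leftrightarrow (4)$, then $(1) \Rightarrow (4)$, and finally the harder direction $(2) \Rightarrow (1)$. The key observation for the first block of equivalences is that any $\E$-isometry $(V_1, V_2, V_3)$ satisfies $V_1^*V_1 = V_2^*V_2$. To see this, I would let $(N_1, N_2, N_3)$ be the minimal normal extension of $(V_1, V_2, V_3)$, which by Lemma \ref{Lubin2} together with Theorem \ref{thm_E_isometry} and Theorem \ref{thm_E_unitary} is an $\E$-unitary; then $N_3$ is unitary, $N_1 = N_2^*N_3$, and $N_2$ is normal commuting with $N_3$, so Fuglede's theorem and $N_3^*N_3 = I$ yield $N_1^*N_1 = N_3^*N_2N_2^*N_3 = N_2^*N_2$. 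Restricting this identity to the invariant subspace $\HS$ gives $V_1^*V_1 = V_2^*V_2$, and combined with Theorem \ref{prop3.2} identifying $\B_n$-isometries with spherical isometries, this shows $(V_0, V_1)$ is a $\B_2$-isometry if and only if $(V_0, V_2)$ is. For $(1) \Rightarrow (4)$, I would take an $\Q$-unitary extension $(\widehat{V}_0, \widehat{V}_1, \widehat{V}_2, \widehat{V}_3)$ of $(V_0, V_1, V_2, V_3)$; Theorem \ref{thm_Q_unitary} splits it into a $\B_2$-unitary $(\widehat{V}_0, \widehat{V}_1)$ and an $\E$-unitary $(\widehat{V}_1, \widehat{V}_2, \widehat{V}_3)$, the same identity at the unitary level ensures $(\widehat{V}_0, \widehat{V}_2)$ is also a $\B_2$-unitary, and restriction to the joint invariant subspace $\HS$ yields $(4)$.

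\medskip

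The heart of the proof is $(2) \Rightarrow (1)$: given that $(V_0, V_1)$ is a $\B_2$-isometry and $(V_1, V_2, V_3)$ is an $\E$-isometry, I would construct an $\Q$-unitary extension directly. Let $(N_1, N_2, N_3)$ on a Hilbert space $\mathcal{K} \supseteq \HS$ be the minimal normal extension of the $\E$-isometry $(V_1, V_2, V_3)$, which is an $\E$-unitary. Since $\mathcal{K}$ is the closed linear span of the total subset
\[
\{N_1^{*\alpha_1}N_2^{*\alpha_2}N_3^{*\alpha_3}h \ : \ h \in \HS,\ \alpha_1, \alpha_2, \alpha_3 \in \N \cup \{0\}\},
\]
I would define a candidate extension by
\[
N_0(N_1^{*\alpha_1}N_2^{*\alpha_2}N_3^{*\alpha_3}h) := N_1^{*\alpha_1}N_2^{*\alpha_2}N_3^{*\alpha_3} V_0 h
\]
and extend by linearity. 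The verification would then proceed in order: (a) $N_0$ is well-defined and a bounded contraction on $\mathcal{K}$ with $N_0|_{\HS} = V_0$, using the contractivity of $V_0$ and Fuglede's theorem to rearrange the norm computations on the total set; (b) $N_0$ commutes with each $N_j$, which follows from the commutativity of $V_0$ with $V_1, V_2, V_3$ on $\HS$ together with the invariance of $\HS$ under the $N_j$; (c) $N_0$ is normal; and (d) $N_0^*N_0 + N_1^*N_1 = I$ on $\mathcal{K}$. Once (a)--(d) are in place, Theorem \ref{thm_Q_unitary} delivers that $(N_0, N_1, N_2, N_3)$ is an $\Q$-unitary extending $(V_0, V_1, V_2, V_3)$, which makes $(V_0, V_1, V_2, V_3)$ an $\Q$-isometry.

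\medskip

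The hard part will be establishing the normality of $N_0$ together with the identity in (d) on all of $\mathcal{K}$. I would handle this by first using the subnormality of $(V_0, V_1)$ (from Lemma \ref{subnormal}, since a $\B_2$-isometry is a spherical isometry) and of $(V_1, V_2, V_3)$ to verify Athavale's joint moment criterion (Lemma \ref{Athavale}) for the full quadruple $(V_0, V_1, V_2, V_3)$, thereby producing a commuting joint normal extension; the uniqueness of the minimal normal extension together with Lemma \ref{Lubin2} then allows one to identify this extension with $(N_0, N_1, N_2, N_3)$ and transfer normality to $N_0$. The identity $N_0^*N_0 + N_1^*N_1 = I$ is first checked on $\HS$ from the $\B_2$-isometry hypothesis $V_0^*V_0 + V_1^*V_1 = I$, and then propagated to $\mathcal{K}$ using that $N_0^*N_0 + N_1^*N_1$ commutes with the spectral calculus of the normal tuple $(N_1, N_2, N_3)$, whose cyclic orbit of $\HS$ is dense in $\mathcal{K}$.
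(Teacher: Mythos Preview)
Your equivalences $(2)\Leftrightarrow(3)\Leftrightarrow(4)$ and the implication $(1)\Rightarrow(4)$ are fine, though the paper obtains $V_1^*V_1=V_2^*V_2$ in one line directly from Theorem \ref{thm_E_isometry} (namely $V_1=V_2^*V_3$ and $V_3^*V_3=I$ give $V_1^*V_1=V_3^*V_2V_2^*V_3=V_2^*V_2$), with no detour through the minimal normal extension.

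The real issue is your argument for $(2)\Rightarrow(1)$, which has two gaps. First, the ad hoc operator $N_0$ you define on the minimal normal extension of $(V_1,V_2,V_3)$ is not shown to be well-defined: you need
\[
\Big\|\sum_i N_1^{*\alpha_{1,i}}N_2^{*\alpha_{2,i}}N_3^{*\alpha_{3,i}}V_0h_i\Big\|\ \le\ \Big\|\sum_i N_1^{*\alpha_{1,i}}N_2^{*\alpha_{2,i}}N_3^{*\alpha_{3,i}}h_i\Big\|,
\]
and the cross terms involve expressions of the form $\langle N^{\beta}N^{*\alpha}V_0h,V_0g\rangle$ with $N^{*}$'s that do not leave $\HS$ invariant, so ``contractivity of $V_0$ plus Fuglede'' does not settle it. Second, your fallback via Lemma \ref{Athavale} is not filled in: subnormality of $(V_0,V_1)$ and of $(V_1,V_2,V_3)$ separately yields the Athavale inequalities only for multi-indices supported on $\{0,1\}$ or on $\{1,2,3\}$, whereas the quadruple criterion requires inequalities that mix $V_0$ with $V_2$ and $V_3$ simultaneously. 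Nothing in your sketch produces those mixed inequalities.

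The paper avoids both problems with a single stroke that you almost have in hand but never use on the full quadruple. From $V_0^*V_0+V_1^*V_1=I$, $V_0^*V_0+V_2^*V_2=I$, and $V_3^*V_3=I$ one obtains
\[
\tfrac{2}{3}V_0^*V_0+\tfrac{1}{3}V_1^*V_1+\tfrac{1}{3}V_2^*V_2+\tfrac{1}{3}V_3^*V_3=I,
\]
and Lemma \ref{subnormal} then gives a \emph{joint} normal extension $(U_0,U_1,U_2,U_3)$ of the whole quadruple in one step. Taking it minimal, Lemma \ref{Lubin2} makes $U_3$ unitary, and Bram's spectral inclusion together with polynomial convexity of $\EC$ shows $(U_1,U_2,U_3)$ is an $\E$-contraction, hence an $\E$-unitary by Theorem \ref{thm_E_unitary}. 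The identity $U_0^*U_0+U_1^*U_1=I$ is verified on $\HS$ (using normality to expand $\|(U_0^*U_0+U_1^*U_1-I)x\|^2$ into norms computable via the $V_j$'s) and then passes to $\mathcal K$ by minimality. This is exactly the propagation idea you sketch at the end, but applied after joint subnormality has been secured rather than as a substitute for it.
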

	
	\begin{proof}
		$(1) \implies (2).$ Let $(V_0, V_1,V_2,V_3)$ be an $\Q$-isometry on a Hilbert space $\HS$. By definition, there exists an $\Q$-unitary $(U_0, U_1, U_2, U_3)$ acting on a larger Hilbert space $\mathcal{K}$ containing $\mathcal{H}$ such that $\mathcal{H}$ is an invariant subspace for each $U_j$ and $(V_0, V_1, V_2, V_3)=(U_0|_\HS, U_1|_{\mathcal{H}}, U_2|_{\mathcal{H}} ,U_3|_{\mathcal{H}})$. We have by Theorem \ref{thm_Q_unitary} that $(U_0, U_1)$ is a $\mathbb{B}_2$-unitary and $(U_1, U_2, U_3)$ is an $\E$-unitary. Therefore, $(V_1, V_2, V_3)$ is an $\E$-isometry. Since each $V_j=U_j|_\HS$, we have that 
		$
		V_j^*V_j=P_\mathcal{H}U_j^*U_j|_\mathcal{H}$. Again by Theorem \ref{thm_Q_unitary}, we have
		\[
		I-V_0^*V_0-V_1^*V_1=P_{\mathcal{H}}\bigg(I-U_0^*U_0-U_1^*U_1\bigg)\bigg|_\mathcal{H}=0.
		\] 
		It follows from Theorem \ref{prop3.2} that $(V_0, V_1)$ is a $\B_2$-isometry.
		
		\medskip 
		
		\noindent $(2) \implies (3).$ We only need to show that $(V_0, V_2)$ is a $\B_2$-isometry. Since $(V_1, V_2, V_3)$ is an $\E$-isometry, we have by Theorem \ref{thm_E_isometry} that $V_1=V_2^*V_3$ and $V_3$ is an isometry. Then 
		\[
		V_1^*V_1=V_3^*V_2V_1=V_3^*V_1V_2=V_3^*V_2^*V_3V_2
		=V_2^*V_3^*V_3V_2=V_2^*V_2. 	
		\]		
		Since $(V_0, V_1)$ is a $\mathbb{B}_2$-isometry and $V_1^*V_1=V_2^*V_2$, we have that $V_0^*V_0+V_2^*V_2=V_0^*V_0+V_1^*V_1=I$ and by Theorem \ref{prop3.2}, $(V_0, V_2)$ is a $\B_2$-isometry.

		\medskip 
		
		\noindent $(3) \implies (4).$  One can easily employ similar techniques as above in $(2) \implies (3)$ to show that $(V_0, V_1)$ is a $\B_2$-isometry. 
		
		\medskip
		
		\noindent	$(4) \implies (1).$ We first show that $(V_0, V_1, V_2, V_3)$ has a simultaneous normal extension. Since $(V_0, V_1)$ and $(V_0, V_2)$ are $\B_2$-isometries and $(V_1, V_2, V_3)$ is an $\E$-isometry, we have by Theorems \ref{prop3.2} \& \ref{thm_E_isometry} that
		$
		V_0^*V_0+V_1^*V_1=I, V_0^*V_0+V_2^*V_2=I$ and $V_3^*V_3=I$.	Therefore, 
		\[
		2V_0^*V_0+V_1^*V_1+V_2^*V_2+V_3^*V_3=3I \quad \text{and so,} \quad \frac{2}{3}V_0^*V_0+\frac{1}{3}V_1^*V_1+\frac{1}{3}V_2^*V_2+\frac{1}{3}V_3^*V_3=I.
		\]
		We have by Lemma \ref{subnormal} that $(V_0, V_1, V_2, V_3)$ admits a simultaneous normal extension. Thus, there exist a Hilbert space $\mathcal{K}$ containing $\HS$ and a commuting quadruple $(U_0, U_1, U_2, U_3)$ of normal operators acting on $\mathcal{K}$ such that $\mathcal{H}$ is a joint invariant subspace for $U_0, U_1, U_2, U_3$ and $V_j=U_j|_\HS$ for $0 \leq j \leq 3$. Without loss of generality, we assume that $(U_0, U_1, U_2, U_3)$ on $\mathcal{K}$ is the minimal normal extension of the triple $(V_0, V_1, V_2, V_3)$. Then
		\[
		\mathcal{K}=		\overline{\mbox{span}}\{U_0^{*j_0}U_1^{*j_1}U_2^{*j_2} U_3^{*j_3}h \ | \ j_0, j_1, j_2, j_3 \geq 0, \ h \in \mathcal{H}\}.
		\]
		We prove that $(U_0, U_1, U_2, U_3)$ acting on $\mathcal{K}$ is an $\mathbb{H}$-unitary. We have by Lemma \ref{Lubin2} that $U_j$ is unitarily equivalent to the minimal normal extension of $V_j$ for $j=0, 1, 2, 3$. Consequently, $U_3$ is a unitary by being the minimal normal extension of the isometry $V_3$. Choose a holomorphic polynomial $h$ in three variables and define $f(z_0, z_1, z_2, z_3)=h(z_1, z_2, z_3)$. Evidently, $S'=f(V_0, V_1, V_2, V_3)$ is a subnormal operator and let $N'$ be its minimal normal extension. By Lemma \ref{Lubin2}, $f(U_0, U_1, U_2, U_3)$ and $N'$ are unitarily equivalent. Bram proved in \cite{Bram} that the spectral inclusion relation holds for subnormal operators, i.e., $\sigma(N') \subseteq \sigma(S')$. Using the normality of $N'$, we have that 
		\begin{equation*}
			\begin{split}
				\|N'\|=\sup\{|z| : z  \in \sigma(N')\}
				\leq \sup\{|z| : z  \in \sigma(S')\}
				& = \sup\{|z| : z \in \sigma(f(V_0, V_1, V_2, V_3))\}\\
				& = \sup\{|z| : z \in \sigma(h(V_1, V_2, V_3))\}\\
				& = \sup\{|z| : z \in h(\sigma_T(V_1, V_2, V_3))\}\\
				& \leq \sup\{|z| : z \in h(\overline{\E})\}\\
				&=\|h\|_{\infty, \overline{\E}}.
			\end{split}
		\end{equation*} 
		Since $h(U_1, U_2, U_3)=f(U_0, U_1, U_2, U_3)$ and $f(U_0, U_1, U_2, U_3)$ is unitarily equivalent to $N'$, it follows that
		$
		\|h(U_1, U_2, U_3)\| =\|N'\| \leq \|h\|_{\infty, \overline{\E}}.
		$
		Thus, $(U_1, U_2, U_3)$ is an $\E$-contraction and by Theorem \ref{thm_E_unitary}, $(U_1, U_2, U_3)$ is an $\E$-unitary. Since $(V_0, V_1)$ is a $\B_2$-isometry, we have by Theorem \ref{prop3.2} that 
		\begin{equation}\label{eqn_601}
			\|V_0y\|^2+\|V_1y\|^2-\|y\|^2=\langle (V_0^*V_0-I+V_1^*V_1)y, y \rangle=0 
		\end{equation}
		for every $y \in \mathcal{H}$. Let $x \in \mathcal{H}$. Then
		\begin{small}
			\begin{equation*}
				\begin{split}
					& \quad  \|(U_0^*U_0+U_1^*U_1-I)x\|^2 \\
					& = \left(\|U_0^2x\|^2+\|U_0U_1x\|^2-\|U_0x\|^2\right)
					+\left(\|U_0U_1x\|^2+\|U_1^2x\|^2-\|U_1x\|^2\right)
					-\left(\|U_0x\|^2+\|U_1x\|^2-\|x\|^2\right)\\	
					& =\left(\|V_0^2x\|^2+\|V_0V_1x\|^2-\|V_0x\|^2\right)
					+\left(\|V_0V_1x\|^2+\|V_1^2x\|^2-\|V_1x\|^2\right)
					-\left(\|V_0x\|^2+\|V_1x\|^2-\|x\|^2\right) \\
					& =0,	
				\end{split}
			\end{equation*}
		\end{small} 
		where the last equality follows from \eqref{eqn_601}. Thus, $(I-U_0^*U_0-U_1^*U_1)x=0$ for all $x \in \mathcal{H}$. It follows from the definition of $\mathcal{K}$ that $I-U_0^*U_0-U_1^*U_1=0$ on $\mathcal{K}$. By Theorem \ref{thm_Q_unitary}, $(U_0, U_1, U_2, U_3)$ is an $\mathbb{H}$-unitary and so, $(V_0, V_1, V_2, V_3)$ is an $\mathbb{H}$-isometry. The proof is now complete.
	\end{proof}

	The next result is an immediate consequence of Theorem \ref{thm_Q_unitary} and Theorem \ref{thm_Q_isometry}.

	\begin{cor}\label{P_unitaryII}
		Let $\underline{N}=(N_0, N_1, N_2, N_3)$ be a commuting quadruple of operators. Then $\underline{N}$ is an $\mathbb{H}$-unitary if and only if both $(N_0, N_1, N_2, N_3)$ and $(N_0^*, N_1^*, N_2^*, N_3^*)$ are $\mathbb{H}$-isometries.
	\end{cor}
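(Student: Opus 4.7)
The plan is to derive this corollary as a direct consequence of the two main characterization theorems already established, namely Theorem \ref{thm_Q_unitary} (for $\Q$-unitaries) and Theorem \ref{thm_Q_isometry} (for $\Q$-isometries), together with the known characterizations of $\E$-unitaries and $\E$-isometries in Theorems \ref{thm_E_unitary} and \ref{thm_E_isometry}.

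For the forward direction, I would first observe that any $\Q$-unitary is trivially an $\Q$-isometry by choosing the ambient Hilbert space to be $\mathcal H$ itself and the extending $\Q$-unitary to be $\underline N$. It then remains to show that $\underline N^*$ is also an $\Q$-isometry, and for this it suffices to verify that $\underline N^*$ is itself an $\Q$-unitary via Theorem \ref{thm_Q_unitary}(3). The required condition $(N_1^*, N_2^*, N_3^*)$ is an $\E$-unitary follows from Theorem \ref{thm_E_unitary}(3) applied to $\underline N$, using that $N_3^*$ is unitary whenever $N_3$ is, and that the commuting normal triple doubly commutes (Fuglede), giving $N_1^* = N_3^* N_2 = N_2 N_3^*$. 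The remaining two symmetric identities $N_0^*N_0 = I - N_1^*N_1$ and $N_0N_0^* = I - N_1N_1^*$ are precisely part of Theorem \ref{thm_Q_unitary}(3) for $\underline N$, so nothing new is needed.

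For the converse, assume both $\underline N$ and $\underline N^*$ are $\Q$-isometries. I would apply Theorem \ref{thm_Q_isometry} to each: from $\underline N$ one extracts that $(N_0, N_1)$ is a $\B_2$-isometry (so $N_0^*N_0 + N_1^*N_1 = I$) and that $(N_1, N_2, N_3)$ is an $\E$-isometry; from $\underline N^*$ one gets $(N_0^*, N_1^*)$ a $\B_2$-isometry (so $N_0N_0^* + N_1N_1^* = I$) and $(N_1^*, N_2^*, N_3^*)$ an $\E$-isometry. By Theorem \ref{thm_E_isometry}, both $N_3$ and $N_3^*$ are isometries, so $N_3$ is unitary, and Theorem \ref{thm_E_unitary}(2) then upgrades $(N_1, N_2, N_3)$ to an $\E$-unitary. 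With all three conditions of Theorem \ref{thm_Q_unitary}(3) now in hand, $\underline N$ is an $\Q$-unitary.

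I do not anticipate any genuine obstacle: the corollary is essentially a bookkeeping combination of the earlier theorems. The only minor point requiring care is the symmetry between $\underline N$ and $\underline N^*$ at the level of $\E$-structures, which is handled cleanly by Fuglede's theorem together with the algebraic relation $N_1 = N_2^*N_3$.
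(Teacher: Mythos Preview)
Your proposal is correct and follows essentially the same route as the paper, which simply declares the corollary an immediate consequence of Theorem \ref{thm_E_unitary} and Theorem \ref{thm_Q_isometry} (with Theorem \ref{thm_Q_unitary} implicitly needed to close the loop). The only remark is that for the forward direction one can argue even more quickly: since $b\Q$ is invariant under complex conjugation (as is clear from Theorem \ref{thm:distP} and Theorem \ref{thm6.1}), the adjoint $\underline N^*$ of an $\Q$-unitary is again an $\Q$-unitary directly from the spectral definition, bypassing the algebraic verification via Fuglede; but your route is equally valid.
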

	
We now present an analogue of Corollary \ref{cor_P_Q_uni}, which characterizes $\mathbb{P}$-isometries and $\mathbb{B}_2$-isometries in terms of $\mathbb{H}$-isometries. The proof of the following corollary is an immediate consequence of Theorem \ref{thm_Q_isometry}, together with the characterization of $\mathbb{P}$-isometries stated below.

\begin{thm}[\cite{PalN}, Theorem 6.7]\label{thm_P_iso}
	Let $(V_1, V_2, V_3)$ be a commuting triple of operators on a Hilbert space $\mathcal{H}$. Then $(V_1, V_2, V_3)$ is a $\mathbb{P}$-isometry if and only if $(V_1, V_2/2)$ is a $\mathbb{B}_2$-isometry and $(V_2, V_3)$ is a $\Gamma$-isometry.
\end{thm}

	\begin{cor}\label{cor_P_Q_iso}
		Let $A, X_3, S$ and $P$ be operators on a Hilbert space $\HS$. Then the following holds.
		\begin{enumerate}
			\item $(A, S, P)$ is a $\Pe$-isometry if and only if $(A, S\slash 2, S\slash 2, P)$ is an $\Q$-isometry.
			
			\item $(A, X_3)$ is a commuting pair of isometries if and only if $(A, 0, 0, X_3)$ is an $\Q$-isometry.
		\end{enumerate}
	\end{cor}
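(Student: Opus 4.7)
The plan is to reduce both statements to the corresponding characterizations already at our disposal: Theorem \ref{thm_Q_isometry} for $\Q$-isometries, Theorem \ref{thm_P_iso} for $\Pe$-isometries, Theorem \ref{thm_E_isometry} for $\E$-isometries, and part (2) of Theorem \ref{Gamma_uni} for $\Gamma$-isometries. For part (1), by Theorem \ref{thm_Q_isometry} the quadruple $(A, S/2, S/2, P)$ is an $\Q$-isometry if and only if $(A, S/2)$ is a $\B_2$-isometry and $(S/2, S/2, P)$ is an $\E$-isometry. By Theorem \ref{thm_P_iso}, $(A, S, P)$ is a $\Pe$-isometry if and only if $(A, S/2)$ is a $\B_2$-isometry and $(S, P)$ is a $\Gamma$-isometry. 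Thus it suffices to show that $(S, P)$ is a $\Gamma$-isometry if and only if $(S/2, S/2, P)$ is an $\E$-isometry.

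To verify this equivalence I would unfold both sides. By Theorem \ref{Gamma_uni}(2), $(S, P)$ is a $\Gamma$-isometry precisely when $P$ is an isometry, $S = S^*P$, and $\|S\| \leq 2$. By Theorem \ref{thm_E_isometry}(3), $(S/2, S/2, P)$ is an $\E$-isometry precisely when $P$ is an isometry, $S/2 = (S/2)^*P$, and $\|S/2\| \leq 1$. The latter conditions rewrite directly to $P$ isometry, $S = S^*P$, and $\|S\| \leq 2$, matching the former. (Commutativity of $(S/2, S/2, P)$ follows trivially from commutativity of $(S,P)$.)

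For part (2), the same strategy applies. By Theorem \ref{thm_Q_isometry}, $(A, 0, 0, X_3)$ is an $\Q$-isometry if and only if $(A, 0)$ is a $\B_2$-isometry and $(0, 0, X_3)$ is an $\E$-isometry. Combining Theorem \ref{prop3.2} with the spherical-isometry definition, $(A, 0)$ is a $\B_2$-isometry exactly when $A^*A = I$, i.e., $A$ is an isometry. Similarly, Theorem \ref{thm_E_isometry}(3) gives that $(0, 0, X_3)$ is an $\E$-isometry exactly when $X_3$ is an isometry (the relations $0 = 0 \cdot X_3$ and $\|0\| \leq 1$ are automatic). Commutativity $AX_3 = X_3 A$ is part of both hypotheses, so the two statements are equivalent.

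The argument is essentially bookkeeping with the characterizations already established, so no single step should be a genuine obstacle; the only thing to be careful about is making sure the commutativity and norm conditions translate cleanly under the doubling $S \mapsto (S/2, S/2)$. As an alternative route, one could invoke Corollary \ref{cor_P_Q_uni}: given a $\Pe$-isometry $(A, S, P)$, extend it to a $\Pe$-unitary $(\widehat A, \widehat S, \widehat P)$; then Corollary \ref{cor_P_Q_uni} gives that $(\widehat A, \widehat S/2, \widehat S/2, \widehat P)$ is an $\Q$-unitary, and since $\HS$ is jointly invariant, $(A, S/2, S/2, P)$ is an $\Q$-isometry. The converse runs in the other direction using Proposition \ref{prop2.3}-type reasoning, and part (2) admits an analogous lift via the unitary pair case.
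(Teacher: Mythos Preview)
Your proof is correct and follows essentially the same approach as the paper, which simply notes that the corollary follows directly from Theorems \ref{thm_P_iso} and \ref{thm_Q_isometry} (with the alternative route via Corollary \ref{cor_P_Q_uni} also mentioned). You have in fact filled in the bridging step the paper leaves implicit, namely that $(S,P)$ is a $\Gamma$-isometry iff $(S/2,S/2,P)$ is an $\E$-isometry, by matching the algebraic characterizations from Theorems \ref{Gamma_uni}(2) and \ref{thm_E_isometry}(3).
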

	
	Our aim now is to obtain a Wold type decomposition for an $\Q$-isometry. To do so, we recall from \cite{Bhattacharyya-01} a Wold type decomposition for an $\E$-isometry. Before this, we mention that a pure $\E$-isometry (in the sense of Definition \ref{defn:H-unitary}) is nothing but an $\E$-isometry with no $\E$-unitary part whereas in \cite{Bhattacharyya-01}, an $\E$-isometry is referred to as pure if its third component is a pure isometry. However, these two notions of pure $\E$-isometry coincide as discussed in the remark below.

	\begin{rem}\label{rem_pure_E} Let $(V_1, V_2, V_3)$ be a pure $\E$-isometry (in the sense of Definition \ref{defn:H-unitary}) on a Hilbert space $\HS$. We have by Theorem \ref{thm_E_isometry} that $V_3$ is an isometry. It follows from the Wold decomposition of an isometry (see Theorem 1.1 in \cite{Nagy}) that there are closed reducing subspaces $\HS_1, \HS_2$ for $V_3$  such that $\HS=\HS_1 \oplus \HS_2$, $V_3|_{\HS_1}$ is a unitary and $V_3|_{\HS_2}$ is a pure isometry. Following the proof of Theorem 5.6 in \cite{Bhattacharyya-01}, we have that $\HS_1, \HS_2$ are joint reducing subspaces for $V_1, V_2, V_3$. Consequently, it follows from Theorem \ref{thm_E_unitary} that $(V_1|_{\HS_1}, V_2|_{\HS_1}, V_3|_{\HS_1})$ is an $\E$-unitary. Since $(V_1, V_2, V_3)$ has no $\E$-unitary part, we must have $\HS_1=\{0\}$ and so, $\HS=\HS_2$. Therefore, $V_3$ is a pure isometry. Conversely, let $\underline{V}=(V_1, V_2, V_3)$ be an $\E$-isometry on a Hilbert space $\HS$ with $V_3$ being a pure isometry. Let $\mathcal{L} \subseteq \HS$ be a joint reducing subspace for $V_1, V_2, V_3$ such that $(V_1|_{\mathcal{L}}, V_2|_{\mathcal{L}}, V_3|_{\mathcal{L}})$ is an $\E$-unitary. By Theorem \ref{thm_E_unitary}, $V_3|_{\mathcal{L}}$ is a unitary and so, $\mathcal{L}=\{0\}$ since $V_3$ is a pure isometry. Hence, $\underline{V}$ is an $\Q$-isometry in the sense of Definition \ref{defn:H-unitary}. \qed
	\end{rem}

	The above remark is made to avoid any confusion between the two notions of pure $\E$-isometries (appearing here and in \cite{Bhattacharyya-01}), which turn out to be equivalent. Being equipped with this, we recall the following result for a Wold type decomposition of an $\E$-isometry.
	
	\begin{thm}[\cite{Bhattacharyya-01}, Theorem 5.6] \label{Wold_E_Isometry}
		Let $(V_1, V_2, V_3)$ be an $\E$-isometry on a Hilbert space $\HS$. Let $\HS=\HS_1 \oplus \HS_2$ be the Wold decomposition of $V_3$ such that $V_3|_{\HS_1}$ is a unitary and $V_3|_{\HS_2}$ is a pure isometry. Then $\mathcal{H}_1$ and $\mathcal{H} _2$ are reducing subspaces for $V_1, V_2, V_3$ and the following hold.
		\begin{enumerate}
			\item $(V_1|_{\HS_1}, V_2|_{\HS_1}, V_3|_{\HS_1})$ is an $\E$-unitary.
			\item $(V_1|_{\HS_2}, V_2|_{\HS_2}, V_3|_{\HS_2})$ is a pure $\E$-isometry.
		\end{enumerate}
	\end{thm}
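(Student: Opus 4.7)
The plan is to propagate the Wold splitting of $V_3$ up to the full triple by using the defining identity $V_1 = V_2^* V_3$ for an $\E$-isometry, as recorded in Theorem \ref{thm_E_isometry}. Write the Wold decomposition concretely as
\[
\HS_1 = \bigcap_{n \geq 0} V_3^n \HS, \qquad \HS_2 = \HS_1^\perp,
\]
so that $V_3|_{\HS_1}$ is unitary and $V_3|_{\HS_2}$ is a pure isometry; in particular $\HS_1$ is reducing for $V_3$ and $V_3^* \HS_1 \subseteq \HS_1$. The task is to show that both $\HS_1$ and $\HS_2$ jointly reduce $V_1, V_2, V_3$, and then to identify the two restrictions.

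Forward invariance of $\HS_1$ under $V_1$ and $V_2$ is the easy half: since the triple is commuting, if $x \in \HS_1$ and we write $x = V_3^n y_n$ for each $n \geq 0$, then $V_j x = V_3^n V_j y_n$ lies in every $V_3^n \HS$, hence in $\HS_1$, for $j=1,2$. The step I expect to be the main obstacle is joint invariance under the adjoints $V_1^*, V_2^*$, equivalently joint invariance of $\HS_2$ under $V_1, V_2$. The key observation is that $V_3|_{\HS_1}$ is unitary, so $V_3 V_3^* = I$ on $\HS_1$. Multiplying the relation $V_1 = V_2^* V_3$ on the right by $V_3^*$ and restricting to $\HS_1$ gives $V_2^* = V_1 V_3^*$ on $\HS_1$; similarly, taking adjoints in the same identity yields $V_1^* = V_3^* V_2$ globally. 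Since $V_1, V_2$ and $V_3^*$ each map $\HS_1$ into $\HS_1$, the same is true of $V_1^*$ and $V_2^*$, so $\HS_1$ (and hence $\HS_2$) is jointly reducing for the triple.

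With the reducing decomposition in hand, the two conclusions follow from characterizations already on record. The restriction $(V_1|_{\HS_1}, V_2|_{\HS_1}, V_3|_{\HS_1})$ is itself an $\E$-isometry, being the joint restriction of an $\E$-isometry to a jointly reducing subspace, and therefore an $\E$-contraction whose third component is a unitary; condition (2) of Theorem \ref{thm_E_unitary} then upgrades it to an $\E$-unitary, proving (1). For part (2), the restriction $(V_1|_{\HS_2}, V_2|_{\HS_2}, V_3|_{\HS_2})$ is again an $\E$-isometry, and its third component $V_3|_{\HS_2}$ is a pure isometry by construction; Remark \ref{rem_pure_E} then identifies this precisely with a pure $\E$-isometry in the sense of Definition \ref{defn_operators}, completing the argument.
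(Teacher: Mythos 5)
Your argument is correct. Note first that the paper itself does not prove this statement; it is imported verbatim from \cite{Bhattacharyya-01} (Theorem 5.6), so there is no in-paper proof to match against. Your route is sound: forward invariance of $\HS_1=\bigcap_{n\ge 0}V_3^n\HS$ under $V_1,V_2$ follows from commutativity exactly as you say; the identity $V_1=V_2^*V_3$ from Theorem \ref{thm_E_isometry} gives $V_1^*=V_3^*V_2$ globally and $V_2^*=V_1V_3^*$ on $\HS_1$ (using $V_3V_3^*=I$ there), which yields invariance under the adjoints and hence joint reduction; the restriction to a joint invariant subspace of an $\E$-isometry is again an $\E$-isometry, so Theorem \ref{thm_E_unitary} and the self-contained converse direction of Remark \ref{rem_pure_E} finish parts (1) and (2). (Be slightly careful with Remark \ref{rem_pure_E}: its forward direction is itself deduced from the theorem you are proving, so only the converse direction --- which rests solely on Theorem \ref{thm_E_unitary} --- may be invoked without circularity; that is indeed the direction you use.) The standard proof, both in \cite{Bhattacharyya-01} and in the analogous step of this paper's Theorem \ref{Wold}, instead writes $V_1,V_2$ as $2\times 2$ block matrices relative to $\HS_1\oplus\HS_2$ and kills the off-diagonal entries via intertwining relations between the unitary $V_3|_{\HS_1}$ and the pure isometry $V_3|_{\HS_2}$; your subspace-invariance computation reaches the same conclusion more directly and avoids the intertwining lemma altogether, at the small cost of using the explicit description of $\HS_1$ from the Wold decomposition.
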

	
We are now in a position to present a Wold type decomposition for $\Q$-isometries, which is another main result of this section.	
	
	\begin{thm}\textbf{(Wold decomposition of an $\mathbb{H}$-isometry).}\label{Wold}
		Let $(V_0, V_1, V_2, V_3)$ be an $\mathbb{H}$-isometry on a Hilbert space $\mathcal{H}$. Then there is a unique orthogonal decomposition $\mathcal{H}=\mathcal{H}^{(u)} \oplus \mathcal{H}^{(c)}$ such that $\mathcal{H}^{(u)}, \mathcal{H}^{(c)}$ are reducing subspaces of $V_0, V_1, V_2, V_3$ and the following hold.
		\begin{enumerate}
			\item $(V_0|_{\HS^{(u)}}, V_1|_{\HS^{(u)}}, V_2|_{\HS^{(u)}},V_3|_{\HS^{(u)}})$ is an $\Q$-unitary.
			\item   $(V_0|_{\HS^{(c)}}, V_1|_{\HS^{(c)}}, V_2|_{\HS^{(c)}},V_3|_{\HS^{(c)}})$ is a pure $\Q$-isometry.
		\end{enumerate}
		Also, there exists a further orthogonal decomposition $\HS^{(c)}= \HS_1^{(c)} \oplus \HS_2^{(c)}$ such that $\HS_1^{(c)}, \HS_2^{(c)}$ are reducing subspaces for $V_0, V_1, V_2, V_3$ and $V_3|_{\HS_2^{(c)}}$ is a pure isometry.		
	\end{thm}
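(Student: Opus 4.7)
The plan is to first isolate the maximal $\mathbb{H}$-unitary joint reducing subspace of $\mathcal{H}$ via the canonical $\mathbb{H}$-unitary extension of the given $\mathbb{H}$-isometry, and then extract the further reducing splitting inside the completely non-unitary part by applying the Wold decomposition for $\mathbb{E}$-isometries (Theorem \ref{Wold_E_Isometry}) to the triple $(V_1, V_2, V_3)|_{\mathcal{H}^{(c)}}$.

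By the very definition of an $\mathbb{H}$-isometry, there exists an $\mathbb{H}$-unitary $(U_0, U_1, U_2, U_3)$ on a larger Hilbert space $\mathcal{K} \supseteq \mathcal{H}$ with $U_j \mathcal{H} \subseteq \mathcal{H}$ and $V_j = U_j|_{\mathcal{H}}$. I would take
\[
\mathcal{H}^{(u)} := \{h \in \mathcal{H} : Wh \in \mathcal{H} \ \text{for every word } W \text{ in } U_0, U_0^*, U_1, U_1^*, U_2, U_2^*, U_3, U_3^*\},
\]
which is closed (an intersection of preimages of $\mathcal{H}$ under bounded operators), reducing for each $U_j$ (the defining condition is stable under composition with any $U_k$ or $U_k^*$), and is the largest such subspace of $\mathcal{H}$. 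The restriction $(V_0, V_1, V_2, V_3)|_{\mathcal{H}^{(u)}} = (U_0, U_1, U_2, U_3)|_{\mathcal{H}^{(u)}}$ is therefore an $\mathbb{H}$-unitary by Theorem \ref{thm_Q_unitary}, since normality and the spectral inclusion $\sigma_T \subseteq b\mathbb{H}$ both descend to reducing restrictions. This gives part (1).

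Set $\mathcal{H}^{(c)} = \mathcal{H} \ominus \mathcal{H}^{(u)}$, which is automatically reducing for each $V_j$ and inherits the $\mathbb{H}$-isometry property as the restriction of the ambient $\mathbb{H}$-unitary to the invariant subspace $\mathcal{H}^{(c)} \subseteq \mathcal{K}$. To establish purity, suppose $\mathcal{N} \subseteq \mathcal{H}^{(c)}$ is a joint reducing subspace of $V_0, V_1, V_2, V_3$ on which the restriction is an $\mathbb{H}$-unitary; then each $V_j|_{\mathcal{N}}$ is normal. For $n \in \mathcal{N}$,
\[
\|V_j^* n\| \;=\; \|V_j n\| \;=\; \|U_j n\| \;=\; \|U_j^* n\|
\]
by normality of $V_j|_{\mathcal{N}}$ and of $U_j$; combined with $V_j^* n = P_{\mathcal{H}} U_j^* n$, this equality forces $U_j^* n \in \mathcal{H}$ and hence $U_j^* n = V_j^* n \in \mathcal{N}$. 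Consequently $\mathcal{N}$ reduces every $U_j$, so by maximality $\mathcal{N} \subseteq \mathcal{H}^{(u)} \cap \mathcal{H}^{(c)} = \{0\}$. Uniqueness of the principal decomposition follows from the same maximality.

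For the final assertion, I would apply Theorem \ref{Wold_E_Isometry} to the $\mathbb{E}$-isometry $(V_1, V_2, V_3)|_{\mathcal{H}^{(c)}}$ (this is an $\mathbb{E}$-isometry by Theorem \ref{thm_Q_isometry}), yielding $\mathcal{H}^{(c)} = \mathcal{H}_1^{(c)} \oplus \mathcal{H}_2^{(c)}$ with both pieces reducing $V_1, V_2, V_3$, $V_3|_{\mathcal{H}_1^{(c)}}$ unitary, and $V_3|_{\mathcal{H}_2^{(c)}}$ a pure isometry. It remains to prove that $V_0$ reduces this splitting. Since $\mathcal{H}_1^{(c)} = \bigcap_n V_3^n \mathcal{H}^{(c)}$ and $V_0 V_3 = V_3 V_0$, we obtain $V_0 \mathcal{H}_1^{(c)} \subseteq \mathcal{H}_1^{(c)}$; for $V_0^* \mathcal{H}_1^{(c)} \subseteq \mathcal{H}_1^{(c)}$ I would combine the concrete Toeplitz model of Theorem \ref{thm_pure_E_iso} on $\mathcal{H}_2^{(c)}$ (any operator commuting with the pure-isometry model for $V_3$ is an analytic Toeplitz operator) with Fuglede's theorem on $\mathcal{H}_1^{(c)}$ (where $V_1, V_2, V_3$ are normal) to pin down the block structure of $V_0$ with respect to the splitting and force its off-diagonal blocks to vanish. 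The hardest part of the entire argument is this last step; everything preceding it follows cleanly from the maximal-reducing-subspace construction together with the inheritance of the $\mathbb{H}$-unitary defining conditions under reducing restriction.
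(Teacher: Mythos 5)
Your construction of $\HS^{(u)}$ as the maximal subspace of $\HS$ reducing the extending $\Q$-unitary $(U_0,U_1,U_2,U_3)$, together with the norm argument (normality on $\mathcal N$ gives $\|P_{\HS}U_j^*n\|=\|V_j^*n\|=\|U_j^*n\|$, which forces $U_j^*n\in\HS$) showing that \emph{any} joint reducing subspace of the $V_j$'s carrying an $\Q$-unitary must already reduce the $U_j$'s, is correct and complete; it delivers parts (1), (2) and uniqueness. This is a genuinely different route from the paper's, which first takes the Wold decomposition $\HS=\HS_1\oplus\HS_2$ of $V_3$, claims it reduces $V_0$, and then applies Morrel's normal/completely-non-normal decomposition to the subnormal block $V_0|_{\HS_1}$. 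Your argument is more economical: it needs neither the Wold decomposition of $V_3$ nor the subnormality of $V_0$ for the principal splitting, and it sidesteps a delicate point in the paper's route (see below).

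The gap is in the final assertion, and it is not merely a missing computation: the step you defer --- that the Wold pieces of $V_3|_{\HS^{(c)}}$ reduce $V_0$ --- is false. Writing $V_0=\begin{bmatrix}A_0&C\\C'&B_0\end{bmatrix}$ with respect to $\HS_1^{(c)}\oplus\HS_2^{(c)}$ and $V_3=A_3\oplus B_3$, commutation gives $C'A_3=B_3C'$ and $CB_3=A_3C$. The first relation kills $C'$ (iterate to $C'^{*}=A_3^{\,n}C'^{*}B_3^{*n}$ and use $B_3^{*n}\to0$ strongly); that is your observation $V_0\HS_1^{(c)}\subseteq\HS_1^{(c)}$. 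The second relation does \emph{not} kill $C$, since $B_3^{\,n}\not\to0$. Concretely, let $\HS_1=\ell^2(\Z_{\geq0})\otimes\ell^2(\Z)$, $\HS_2=\ell^2(\Z_{\geq0})$, $V_3=(I\otimes W)\oplus S$ with $W$ the bilateral and $S$ the unilateral shift, and
\[
V_0=\begin{bmatrix}S\otimes I& C\\0&0\end{bmatrix},\qquad Ce_n=e_0\otimes e_n .
\]
Then $V_0$ is an isometry commuting with $V_3$, so $(V_0,0,0,V_3)$ is an $\Q$-isometry by Corollary \ref{cor_P_Q_iso}; it is pure (its range lies in $\mathrm{ran}(S^{\,n-1}\otimes I)\oplus\{0\}$ after $n$ steps, so $V_0$ has no unitary part and $\HS^{(c)}=\HS$); the Wold decomposition of $V_3|_{\HS^{(c)}}$ is exactly $\HS_1\oplus\HS_2$; and $V_0^*(e_0\otimes e_0,0)=(0,e_0)\notin\HS_1$, so $\HS_1$ does not reduce $V_0$. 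Neither the Toeplitz-commutant description on $\HS_2^{(c)}$ nor Fuglede on $\HS_1^{(c)}$ sees this off-diagonal block, so your proposed repair cannot close the gap. Be aware that the paper's own proof is no better here: it asserts that $CB_3=A_3C$ with $A_3$ unitary and $B_3^{*n}\to0$ forces $C=0$, which the same example refutes; this also infects the paper's derivation of parts (1)--(2), whereas your derivation of those parts avoids the issue entirely. The final assertion survives as literally stated only in the degenerate reading $\HS_1^{(c)}=\HS^{(c)}$, $\HS_2^{(c)}=\{0\}$.
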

	
	\begin{proof}
		We have by Theorem \ref{thm_Q_isometry} that $(V_1, V_2, V_3)$ is an $\E$-isometry and so, by Theorem \ref{thm_E_isometry}, $V_3$ is an isometry. Let $\HS_1 \oplus \HS_2$ be the Wold type decomposition of $V_3$ such that $V_3|_{\HS_1}$ is a unitary and $V_3|_{\HS_2}$ is a pure isometry. Indeed, the space $\HS_1$ is given by 
		\[
		\HS_1=\{x \in \HS : V_3^{*n}V_3^n x=V_3^nV_3^{*n}x=x  \ \ \text{for} \ \ n=0, 1, 2, \dotsc\}.
		\]
		By Theorem \ref{Wold_E_Isometry}, $\mathcal{H}_1, \mathcal{H}_2$ are joint reducing subspaces for $V_1, V_2, V_3$ such that $(V_1|_{\HS_1}, V_2|_{\HS_1}, V_3|_{\HS_1})$ is an $\E$-unitary and $(V_1|_{\HS_2}, V_2|_{\HS_2}, V_3|_{\HS_2})$ is a pure $\E$-isometry. Let 
		\[
		V_0=\begin{bmatrix}
			A_{0} & C\\
			C' & B_{0}\\
		\end{bmatrix}, \quad 	V_1=	\begin{bmatrix}
			A_1 & 0\\
			0 & B_1\\
		\end{bmatrix},		\quad V_2=	\begin{bmatrix}
			A_2 & 0\\
			0 & B_2\\
		\end{bmatrix} \quad \text{and} \quad V_3=\begin{bmatrix}
			A_3 & 0\\
			0 & B_3\\
		\end{bmatrix}.
		\]
		with respect to $\HS=\HS_1 \oplus \HS_2$. Since $V_0V_3=V_3V_0$, we have that $CB_3=A_3C$ and  $C'A_3=B_3C'$. Such an intertwining relation is possible only when $C=C'=0$ since $A_3$ is a unitary and $B_3^{*n} \to 0$ strongly as $n$ tends to infinity. Thus, $\HS_1, \HS_2$ are reducing subspaces for $V_0$ and so, $V_0=A_0 \oplus B_0$ with respect to $\mathcal{H}=\mathcal{H}_1 \oplus \mathcal{H}_2$. Evidently, $A_0$ is subnormal operator acting on $\HS_1$ and so, a further decomposition of $A_0$ into normal part and completely non-normal part is possible. Indeed, Lemma 3.1 in \cite{Morrel} gives that the space
		\[
		\mathcal{H}^{(u)}=\overset{\infty}{\underset{n=0}{\bigcap}}\text{Ker}\left(A_0^{*n}A_0^n-A_0^nA_0^{*n}\right)=\left\{x \in \HS_1 \ : \ A_0^{*n}A_0^nx=A_0^nA_0^{*n}x \ \ \text{for} \ \ n=0, 1, 2, \dotsc\right\}
		\]
		is the maximal reducing subspace of $A_0$ such that $A_0|_{\HS^{(u)}}$ is normal. Note that 
		\[
		\mathcal{H}^{(u)}=\bigg\{x \in \HS \ : \ V_3^{*n}V_3^n x=V_3^nV_3^{*n}x=x \ \ \& \ \ V_0^{*n}V_0^nx=V_0^nV_0^{*n}x \ \ \text{for} \ \ n=0, 1, 2, \dotsc \bigg\} \subseteq \HS_1.
		\]
		Evidently, $(A_1, A_2, A_3)$ is a commuting triple of normal operators acting on $\HS_1$ and  $A_jA_0=A_0A_j$ for $1 \leq j \leq 3$. We have by Fuglede's theorem \cite{Fuglede} that $A_j^*A_0=A_0A_j^*$ for $1 \leq j \leq 3$ and so, $(A_0, A_1, A_2, A_3)$ is a doubly commuting quadruple of operators on $\HS_1$. Then
		\[
		A_j\left(A_0^{*n}A_0^n-A_0^nA_0^{*n}\right)=\left(A_0^{*n}A_0^n-A_0^nA_0^{*n}\right)A_j \quad \text{and} \quad A_j^*\left(A_0^{*n}A_0^n-A_0^nA_0^{*n}\right)=\left(A_0^{*n}A_0^n-A_0^nA_0^{*n}\right)A_j^*
		\]
		for $1 \leq j \leq 3$. Therefore, $\mathcal{H}^{(u)}$ is a joint reducing subspace of $A_0, A_1, A_2$ and $A_3$. Let us define 
		\[
		\underline{U}=(U_0, U_1, U_2, U_3 )= (A_0|_{\HS^{(u)}}, A_1|_{\HS^{(u)}}, A_2|_{\HS^{(u)}}, A_3|_{\HS^{(u)}}).
		\]
		Then $\underline{U}$ is a commuting quadruple of normal operators and $(A_1|_{\HS^{(u)}}, A_2|_{\HS^{(u)}}, A_3|_{\HS^{(u)}})$ is an $\E$-unitary. For $x \in \HS^{(u)}$, we have that $U_0^*U_0x+U_1^*U_1x=A_0^*A_0x+A_1^*A_1x=x$. It now follows from Theorem \ref{thm_Q_unitary} that $\underline{U}$ is an $\Q$-unitary. Let $\HS' \subseteq \HS$ be a joint reducing subspace for $V_0, V_1, V_2, V_3$ such that 
		$\underline{U}'=(V_0|_{\HS'}, V_1|_{\HS'}, V_2|_{\HS'}, V_3|_{\HS'})$ is an $\Q$-unitary. Let $U_j'=V_j|_{\HS'}$ for $0 \leq j \leq 3$. By Theorem \ref{thm_Q_unitary}, $(U_1', U_2', U_3')$ is an $\E$-unitary and by Theorem \ref{thm_E_unitary}, $U_3'$ is a unitary. Since $\HS_1$ is the maximal closed subspace of $\HS$ that reduces $V_3$ to a unitary, we have that $\HS' \subseteq \HS_1$. Also, $A_0|_{\HS'}=V_0|_{\HS'}=U_0'$ is a normal operator. Since $\HS^{(u)}$ is the maximal closed subspace of $\HS_1$ reducing $A_0$ to a normal operator, we have that $\HS' \subseteq \HS^{(u)}$. Hence, $\HS^{(u)}$ is the maximal joint reducing subspace for $V_0, V_1, V_2, V_3$ restricted to which $(V_0, V_1, V_2, V_3)$ acts as an $\Q$-unitary. Let $\mathcal{H}^{(c)}=\HS \ominus \HS^{(u)}$. Then $\mathcal{H}^{(c)}=(\HS_1\ominus \HS_1^{(u)}) \oplus \HS_2$ and $V_3|_{\HS_2}$ is a pure isometry. The maximality of $\HS^{(u)}$ implies that $(V_0|_{\HS^{(c)}}, V_1|_{\HS^{(c)}}, V_2|_{\HS^{(c)}}, V_3|_{\HS^{(c)}})$ is a pure $\Q$-isometry. The uniqueness part is also an immediate consequence of the maximality of $\HS^{(u)}$. The proof is now complete. 
	\end{proof}
	
	We have by Remark \ref{rem_pure_E} that an $\E$-isometry is pure if and only if its last component is a pure isometry. One can ask if an analogous statement holds for an $\Q$-isometry. It is not difficult to see that if $(V_0, V_1, V_2, V_3)$ is an $\Q$-isometry with $V_3$ being a pure isometry, then $(V_0, V_1, V_2, V_3)$ is a pure $\Q$-isometry. Indeed, by Theorem \ref{Wold}, there is an orthogonal decomposition $\mathcal{H}=\mathcal{H}^{(u)} \oplus \mathcal{H}^{(c)}$ such that $\mathcal{H}^{(u)}, \mathcal{H}^{(c)}$ are joint reducing subspaces for $V_0, V_1, V_2, V_3$ and the following hold.
	\begin{enumerate}
		\item $(V_0|_{\HS^{(u)}}, V_1|_{\HS^{(u)}}, V_2|_{\HS^{(u)}},V_3|_{\HS^{(u)}})$ is an $\Q$-unitary.
		\item   $(V_0|_{\HS^{(c)}}, V_1|_{\HS^{(c)}}, V_2|_{\HS^{(c)}},V_3|_{\HS^{(c)}})$ is a pure $\Q$-isometry.
	\end{enumerate}		
	It follows from Theorem \ref{thm_Q_unitary} that $V_3|_{\HS^{(u)}}$ is a unitary. Since $V_3$ is pure, we have that $\HS^{(u)}=\{0\}$ and so, we have arrived at the following result. 
	
	\begin{prop}\label{prop_pureQ}
		If $\underline{V}=(V_0, V_1, V_2, V_3)$ is an $\Q$-isometry and $V_3$ is a pure isometry, then $\underline{V}$ is pure.
	\end{prop}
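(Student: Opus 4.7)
The plan is to apply the Wold type decomposition for $\Q$-isometries, namely Theorem \ref{Wold}, directly to the given quadruple $\underline{V}=(V_0, V_1, V_2, V_3)$. By that theorem, there is a unique orthogonal decomposition $\mathcal{H} = \mathcal{H}^{(u)} \oplus \mathcal{H}^{(c)}$ into joint reducing subspaces of $V_0, V_1, V_2, V_3$ such that the restriction $(V_0|_{\mathcal{H}^{(u)}}, V_1|_{\mathcal{H}^{(u)}}, V_2|_{\mathcal{H}^{(u)}}, V_3|_{\mathcal{H}^{(u)}})$ is an $\Q$-unitary and the restriction to $\mathcal{H}^{(c)}$ is a pure $\Q$-isometry.

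The key observation is that if $(N_0, N_1, N_2, N_3)$ is an $\Q$-unitary, then its last component $N_3$ must be a unitary. This follows by chaining Theorem \ref{thm_Q_unitary} (which forces $(N_1, N_2, N_3)$ to be an $\E$-unitary) with Theorem \ref{thm_E_unitary} (which guarantees that the third component of any $\E$-unitary is a unitary operator). Applied to our situation, this shows that $V_3|_{\mathcal{H}^{(u)}}$ is a unitary on $\mathcal{H}^{(u)}$.

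Now the pureness hypothesis on $V_3$ takes over. Since $V_3$ is a pure isometry on $\mathcal{H}$, it has no nonzero reducing subspace on which it acts as a unitary. Hence $\mathcal{H}^{(u)} = \{0\}$, so that $\mathcal{H} = \mathcal{H}^{(c)}$, and Theorem \ref{Wold} identifies $\underline{V}$ itself with the pure part of its decomposition. Therefore $\underline{V}$ is a pure $\Q$-isometry, as required.

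There is essentially no obstacle to this argument; the work has been done in Theorems \ref{Wold}, \ref{thm_Q_unitary} and \ref{thm_E_unitary}, and the proposition is really a short deduction showing that the apparently stronger notion of pureness at the level of $V_3$ only (in the spirit of Remark \ref{rem_pure_E} for $\E$-isometries) collapses into the general definition of pureness for $\Q$-isometries.
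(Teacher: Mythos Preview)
Your proof is correct and follows essentially the same route as the paper: apply the Wold decomposition for $\Q$-isometries (Theorem \ref{Wold}), observe via Theorems \ref{thm_Q_unitary} and \ref{thm_E_unitary} that the last component of the $\Q$-unitary part must be a unitary, and conclude $\mathcal{H}^{(u)}=\{0\}$ from the pureness of $V_3$. The only difference is that you spell out the intermediate passage through $\E$-unitaries explicitly, whereas the paper absorbs that step into a single citation of Theorem \ref{thm_Q_unitary}.
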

	
	However, the converse to the above result is not true. We refer to Example \ref{exm:imp} here. 
	\begin{eg} 
		Let $T_z$ be the unilateral shift on $\ell^2(\mathbb{N})$. We have by part (2) of Corollary \ref{cor_P_Q_iso} that $\underline{V}=(T_z, 0, 0, I)$ is an $\Q$-isometry. Clearly, the last component of $\underline{V}$ is not a pure isometry. Let $\mathcal{L} \subseteq \ell^2(\N)$ be a joint reducing subspace of $\underline{V}$ such that $\underline{V}$ restricted to $\mathcal{L}$ is an $\Q$-unitary. In particular, $\mathcal{L}$ is a reducing subspace of $T_z$ and $T_z|_{\mathcal{L}}$ is a normal operator. It is well-known that the only reducing subspaces for $T_z$ are $\ell^2(\N)$ and $\{0\}$. Since $T_z|_{\mathcal{L}}$ is normal and $\mathcal{L} \ne \ell^2(\mathbb{N})$, we have that $\mathcal{L}=\{0\}$. Hence, $\underline{V}$ is a pure $\Q$-isometry but its last component is not a pure isometry. \qed 
	\end{eg} 
The above example also shows that if $(V_0, V_1, V_2, V_3)$ is a pure $\Q$-isometry, then $(V_1, V_2, V_3)$ need not be a pure $\E$-isometry. These interesting observations motivate us to study the class of $\Q$-isometries with the last component being a pure isometry. We refer to Section 6 in \cite{JindalII} for a similar discussion on the pentablock isometries with the last component being a pure isometry. In this direction, we recall from \cite{Bhattacharyya-01} the notion of fundamental operators of an $\E$-contraction, and an explicit model for pure $\E$-isometries which was obtained in \cite{PalI}. For an $\E$-contraction $(X_1, X_2, X_3)$, the fundamental equations given by 
	\[
	X_1-X_2^*X_3=D_{X_3}F_1D_{X_3} \qquad \text{and} \qquad X_2-X_1^*X_3=D_{X_3}F_2D_{X_3}
	\]
	have unique solutions $F_1$ and $F_2$ in $\mathcal{B}(\mathcal{D}_{X_3})$. Also, the operator $F_1+F_2z$ has numerical radius at most $1$ for all $z \in \DC$. Moreover,  $F_1$ and $F_2$ satisfy the pair of operator equations
	\[
	D_{X_3}X_1=F_1D_{X_3}+F_2^*D_{X_3}X_3 \quad \text{and} \quad D_{X_3}X_2=F_2D_{X_3}+F_1^*D_{X_3}X_3.
	\]
\begin{thm}[\cite{PalI}, Theorem 3.3]\label{thm_pure_E_iso}
		Let $(V_1, V_2, V_3)$ be a commuting triple of operators on a Hilbert space $\HS$. If $(V_1, V_2, V_3)$ is a pure $\E$-isometry, then there exists a unitary operator $U: \HS \to H^2(\mathcal{D}_{V_3^*})$ such that 
		\[
		V_1=U^*T_\varphi U, \quad 	V_2=U^*T_\psi U \quad \text{and} \quad V_3=U^*T_zU,
		\]
		where $\varphi(z)=F_1^*+F_2z, \psi(z)=F_2^*+F_1z, \ z \in \D$ and $F_1, F_2$ are the fundamental operators of $(V_1^*, V_2^*, V_3^*)$ such that 
		\begin{enumerate}
			\item $[F_1, F_2]=0$ and $[F_1^*, F_1]=[F_2^*, F_2]$,
			\item $\|F_1^*+F_2z\|_{\infty, \DC} \leq 1$.
		\end{enumerate}
		Conversely, if $F_1$ and $F_2$ are operators on a Hilbert space $\mathcal{L}$ satisfying the above two conditions, then $(T_{F_1^*+F_2z}, T_{F_2^*+F_1z}, T_z)$ on $H^2(\mathcal{L})$ is a pure $\E$-isometry.	
	\end{thm}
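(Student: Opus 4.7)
The plan is to combine the Wold decomposition of the pure isometry $V_3$ with the structure of $\E$-isometries from Theorem \ref{thm_E_isometry} and the fundamental-operator identities from Theorem \ref{thm_fo}. First, I would observe that in a pure $\E$-isometry $(V_1, V_2, V_3)$, the component $V_3$ must be a pure isometry: by Theorem \ref{thm_E_isometry}, $V_3$ is an isometry, and by the argument of Remark \ref{rem_pure_E}, any non-trivial unitary summand of $V_3$ would split off an $\E$-unitary direct summand of the triple, contradicting purity. The classical Wold decomposition then furnishes a unitary $U:\HS \to H^2(\mathcal{D}_{V_3^*})$ implementing $UV_3U^* = T_z$, where one uses that the wandering subspace $\mathcal{E} = \ker V_3^*$ can be identified with $\mathcal{D}_{V_3^*}$ (since $V_3$ is an isometry, $D_{V_3^*}^2 = I - V_3V_3^*$ is precisely the projection onto $\mathcal{E}$, and $D_{V_3^*}$ itself is that projection). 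Because $V_1$ and $V_2$ commute with the pure shift $V_3$, their images $UV_jU^*$ commute with $T_z$ and are therefore multiplication operators $T_{\varphi_j}$ by operator-valued $H^\infty$ functions $\varphi_j$ on $\D$.

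Next I would compute the Fourier coefficients of $\varphi_j$ via the projection formula $A_n^{(j)} = P_{\mathcal{E}}\, V_3^{*n} V_j|_{\mathcal{E}}$ for $n \geq 0$. Using $V_1 = V_2^* V_3$ from Theorem \ref{thm_E_isometry} together with $V_3^* V_3 = I$ and the commutativity of the triple, one gets $V_3^{*n} V_1 = V_3^{*(n-1)} V_2^*$; since $V_2^*$ preserves $\mathcal{E}$ (because $V_3^* V_2^* = V_2^* V_3^*$) and $V_3^*$ annihilates $\mathcal{E}$, the coefficients $A_n^{(1)}$ vanish for $n \geq 2$, leaving $\varphi_1(z) = A_0^{(1)} + A_1^{(1)} z$. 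A symmetric argument truncates $\varphi_2$. To identify these coefficients with the fundamental operators $F_1, F_2$ of $(V_1^*, V_2^*, V_3^*)$, I would evaluate the auxiliary identities $D_{V_3^*} V_j^* = F_j D_{V_3^*} + F_k^* D_{V_3^*} V_3^*$ of Theorem \ref{thm_fo} on vectors in $\mathcal{E}$: the second summand vanishes and $D_{V_3^*}$ acts as the identity, giving $V_1^*|_{\mathcal{E}} = F_1$ and $V_2^*|_{\mathcal{E}} = F_2$. Combined with the computation $V_3^* V_1 = V_2^*$, this yields $A_0^{(1)} = F_1^*$, $A_1^{(1)} = F_2$, $A_0^{(2)} = F_2^*$ and $A_1^{(2)} = F_1$, as claimed.

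For condition (1), the identity $[F_1, F_2] = 0$ is immediate from $[V_1^*, V_2^*] = 0$ restricted to $\mathcal{E}$, and $[F_1^*, F_1] = [F_2^*, F_2]$ is obtained by expanding $V_1^* V_1 = V_2^* V_2$ (which follows from $V_1 = V_2^* V_3$ and $V_3^* V_3 = I$) in the Toeplitz picture and matching the coefficient of $z^n$ on basis vectors $z^n e$. Condition (2) follows from $\|T_{\varphi_1}\| = \|\varphi_1\|_{\infty, \DC}$ together with $V_1 = V_2^* V_3$ and $\|V_2\|, \|V_3\| \leq 1$. For the converse, given $F_1, F_2$ on $\mathcal{L}$ satisfying (1)–(2), I would set $V_1 = T_{F_1^* + F_2 z}$, $V_2 = T_{F_2^* + F_1 z}$, $V_3 = T_z$ and verify: (a) commutativity by expanding the polynomial products, where the constant and quadratic terms use $[F_1, F_2] = 0$ and the linear term uses $[F_1^*, F_1] = [F_2^*, F_2]$; (b) the relation $V_1 = V_2^* V_3$ by comparing actions on $z^n e$; (c) the norm bound $\|V_2\| \leq 1$, which follows from the same coefficient comparison giving $V_1^* V_1 = V_2^* V_2$, whence $\|V_2\| = \|V_1\| = \|\varphi_1\|_{\infty, \DC} \leq 1$. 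Theorem \ref{thm_E_isometry} then gives that the triple is an $\E$-isometry, and purity follows from Remark \ref{rem_pure_E} since $T_z$ is a pure isometry.

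The hard part, I expect, is the middle step: showing that $\varphi_1$ and $\varphi_2$ truncate at degree one and identifying the two leading coefficients with $F_1^*, F_2$ and $F_2^*, F_1$ respectively. This is where the $\E$-isometry relation $V_1 = V_2^* V_3$, the joint commutativity, and the fundamental equations interlock — one must push $V_3^{*(n-1)}$ past $V_2^*$ and use that $V_2^*$ preserves $\mathcal{E} = \ker V_3^*$, while the reduction of $D_{V_3^*}$ to a projection (special to isometric $V_3$) is what makes the fundamental equations on $\mathcal{E}$ collapse to the clean identities $V_j^*|_{\mathcal{E}} = F_j$. Once those coefficients are pinned down, both the verification of (1)–(2) and the converse reduce to the polynomial bookkeeping above.
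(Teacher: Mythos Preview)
This theorem is quoted from \cite{PalI} without proof in the present paper, so there is no in-text argument to compare against. Your outline is correct and follows the standard route one would expect from that reference: Remark \ref{rem_pure_E} forces $V_3$ to be a pure isometry, the Wold model gives the unitary onto $H^2(\mathcal{D}_{V_3^*})$, commutation with $T_z$ makes $UV_jU^*$ analytic Toeplitz operators, and the relation $V_1 = V_2^*V_3$ (together with $V_2^*$ preserving $\ker V_3^*$) truncates the symbols at degree one; the auxiliary identities of Theorem \ref{thm_fo} on $\mathcal{E}=\ker V_3^*$ then pin the coefficients to $F_1^*, F_2$ and $F_2^*, F_1$.

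One small streamlining for the converse: deducing $\|V_2\|\leq 1$ via $V_1^*V_1 = V_2^*V_2$ is more work than needed. For $z\in\T$ one has $\|F_2^*+F_1 z\| = \|(F_2^*+F_1 z)^*\| = \|F_2 + F_1^*\bar z\| = \|F_1^* + F_2 w\|$ with $w=1/\bar z\in\T$, so $\|\psi\|_{\infty,\T}=\|\varphi\|_{\infty,\T}\leq 1$ directly from condition (2), and then Theorem \ref{thm_E_isometry} applies.
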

Let $\underline{V}=(V_0, V_1, V_2, V_3)$ be a pure $\Q$-isometry and let $V_3$ be a pure isometry. We have by Theorem \ref{thm_Q_isometry} and Remark \ref{rem_pure_E} that $(V_1, V_2, V_3)$ is a pure $\E$-isometry. It follows from Theorem \ref{thm_pure_E_iso} that $(V_1, V_2, V_3)$ is unitarily equivalent to the commuting triple $(T_{F_1^*+F_2z}, T_{F_2^*+F_1z}, T_z)$ on the vector-valued Hardy space $H^2(\mathcal{D}_{V_3^*})$, where $F_1, F_2$ are the fundamental operators of $(V_1^*, V_2^*, V_3^*)$. Since $V_0$ commutes with $V_3$, there exists a bounded holomorphic map $\phi : \D \to \mathcal{B}(\mathcal{D}_{V_3^*})$ such that $V_0$ and $T_\phi$ are unitarily equivalent on $H^2(\mathcal{D}_{V_3^*})$. Therefore, $\underline{V}$ is unitarily equivalent to the quadruple $(T_\phi, T_{F_1^*+F_2z}, T_{F_2^*+F_1z}, T_z)$ on $H^2(\mathcal{D}_{V_3^*})$. In this direction, we want to have a characterization for such a quadruple to become an $\Q$-isometry when $\phi(z)=G_0+zG_1$ for some operators $G_0, G_1$ on $\mathcal{D}_{V_3^*}$. The motivation of this comes from the operator Fej\'{e}r-Riesz theorem \cite{Dritschel} which we briefly explain here. Let $F_1, F_2$ be two operators on a Hilbert space $\mathcal{L}$ such that we have the following:
	\begin{enumerate}
		\item $[F_1, F_2]=0$ and $[F_1, F_1^*]=[F_2, F_2^*]$,
		\item $\|F_1^*+F_2z\|_{\infty, \DC}\leq 1$. 		
	\end{enumerate}		
	It follows from  Theorem \ref{thm_pure_E_iso} that $(T_{F_1^*+F_2z}, T_{F_2^*+F_1z}, T_z)$ is a pure $\E$-isometry. Since $T_{F_1^*+F_2z}$ is a contraction, we have that 
	$
	I-T_{F_1^*+F_2z}^*T_{F_1^*+F_2z} \geq 0$ and so, $I-(F_1^*+F_2z)^*(F_1^*+F_2z) \geq 0$ for all $z  \in \T$. Now, operator Fej\'{e}r-Riesz Theorem \cite{Dritschel} ensures the existence of $G_0$ and $G_1$ in $\mathcal{B}(\mathcal{L})$ such that  
	\[
	(G_0+zG_1)^*(G_0+zG_1)=I-(F_1^*+F_2z)^*(F_1^*+F_2z)  \quad \text{for all $z  \in \T$}.
	\]
	Consequently, $\underline{V}=(T_{G_0+G_1z}, T_{F_1^*+F_2z}, T_{F_2^*+F_1z}, T_z)$ on $H^2(\mathcal{L})$ becomes a pure $\Q$-isometry if 
	\[
	[F_1^*+F_2z, \ G_0+G_1z]=0 \quad \text{and} \quad [F_2^*+F_1z, \ G_0+G_1z]=0 \quad \text{for all $z \in \T$}.
	\]
	The above condition guarantees the commutativity of the operators in $\underline{V}$. Our next result provides necessary and sufficient conditions for such a quadruple to become a pure $\Q$-isometry.
	
	\begin{thm}\label{thm_pureQ_isom}
		Let $G_0, G_1, F_1, F_2$ be operators on a Hilbert space $\mathcal{L}$. Then the quadruple  
		\[
		\underline{V}=(V_0 ,V_1, V_2, V_3)=(T_{G_0+G_1z}, T_{F_1^*+F_2z}, T_{F_2^*+F_1z}, T_z) \quad \text{on} \ H^2(\mathcal{L})
		\] 
		is a pure $\Q$-isometry if and only if $\|F_1^*+F_2z\|_{\infty, \DC} \leq 1$ and the following hold:
		
		\begin{minipage}[t]{0.25\textwidth}
			\smallskip
			\begin{equation*}
				\begin{split}
					& 1. \ [F_1, F_2]=0,\\
					& 4. \ [F_1, G_0]=[G_1, F_2^*] , \\
					& 7.  \ [F_2, G_0]=[G_1, F_1^*] , \\
					& 10.  \ G_0^*G_1+F_1F_2=0. \\
				\end{split}
			\end{equation*}	
		\end{minipage}
		\begin{minipage}[t]{0.3\textwidth}
			\smallskip
			\begin{equation*}
				\begin{split}
					& 2. \ [F_1^*, F_1]=[F_2^*, F_2],\\
					& 5.  \ [F_1, G_1]=0 , \\
					& 8. \ [F_2, G_1]=0,\\ 
				\end{split}
			\end{equation*}	
		\end{minipage}
		\begin{minipage}[t]{0.3\textwidth}
			\smallskip
			\begin{equation*}
				\begin{split}
					& 3. \ [F_2^*, G_0]=0,\\
					& 6.  \ [F_1^*, G_0]=0, \\
					& 9.  \ G_0^*G_0+G_1^*G_1=I-F_1F_1^*-F_2^*F_2 , \\
				\end{split}
			\end{equation*}	
		\end{minipage}
	\end{thm}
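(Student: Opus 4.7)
The first observation is that $V_3=T_z$ is a pure isometry on $H^2(\mathcal{L})$, so by Proposition \ref{prop_pureQ}, $\underline{V}$ is a pure $\Q$-isometry if and only if it is an $\Q$-isometry. Hence it suffices to characterize when the Toeplitz quadruple $\underline{V}$ is an $\Q$-isometry, and for this we invoke Theorem \ref{thm_Q_isometry}(2): this happens precisely when $V_0, V_1, V_2, V_3$ commute, $(V_1, V_2, V_3)$ is an $\E$-isometry, and $(V_0, V_1)$ is a $\B_2$-isometry. Our strategy is to translate each of these three requirements into explicit algebraic identities among $F_1, F_2, G_0, G_1$ by coefficient-matching.

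For the $\E$-isometry piece, since $V_3 = T_z$ is a pure isometry, Remark \ref{rem_pure_E} yields that $(V_1, V_2, V_3)$ is an $\E$-isometry if and only if it is a pure $\E$-isometry. Theorem \ref{thm_pure_E_iso} then tells us this is equivalent to $[F_1, F_2]=0$, $[F_1^*, F_1]=[F_2^*, F_2]$, and $\|F_1^*+F_2 z\|_{\infty,\DC} \leq 1$, which are precisely conditions (1), (2) and the norm bound. For the commutativity piece, $V_3=T_z$ automatically commutes with the analytic Toeplitz operators $V_0, V_1, V_2$. For $V_0V_j=V_jV_0$ with $j\in\{1,2\}$, I would compute each side as the Toeplitz operator with the polynomial symbol of degree two obtained by multiplying the symbols of $V_0$ and $V_j$, and then equate the $z^0, z^1, z^2$ coefficients. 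For $j=2$ this produces conditions (3), (4), (5), and for $j=1$ it produces (6), (7), (8). The commutativity $V_1V_2=V_2V_1$ reproduces (1) and (2) and so adds nothing new.

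For the $\B_2$-isometry (equivalently, spherical isometry) condition $V_0^*V_0 + V_1^*V_1 = I$, I would compute the action of each summand on $h=\sum h_n z^n \in H^2(\mathcal{L})$. Writing $V_0=T_{G_0+G_1z}$, a direct calculation gives
\[
(V_0^*V_0 h)_n = (G_0^*G_0+G_1^*G_1)h_n + G_0^*G_1 h_{n-1} + G_1^*G_0 h_{n+1},
\]
and an analogous three-term expression holds for $V_1^*V_1$ in terms of $F_1, F_2$. Setting $V_0^*V_0+V_1^*V_1=I$ and matching the coefficients of $h_n$, $h_{n-1}$, $h_{n+1}$ separately forces condition (9) on the diagonal and condition (10) on the off-diagonal (the other off-diagonal giving the adjoint relation, which is redundant). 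Once $(V_0, V_1)$ is a $\B_2$-isometry and $(V_1, V_2, V_3)$ is an $\E$-isometry, the identity $V_1^*V_1=V_2^*V_2$ (which follows from $V_1=V_2^*V_3$, commutativity, and $V_3^*V_3=I$) automatically upgrades this to $(V_0, V_2)$ being a $\B_2$-isometry.

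The converse direction simply runs these equivalences backwards: starting from conditions (1)--(10) and the norm bound, Theorem \ref{thm_pure_E_iso} delivers the pure $\E$-isometry $(V_1, V_2, V_3)$; conditions (3)--(8) rebuild the commutativity of $V_0$ with $V_1, V_2$; conditions (9), (10) rebuild $V_0^*V_0+V_1^*V_1=I$ via the same coefficient-matching reversed; and Theorem \ref{thm_Q_isometry}(2) together with Proposition \ref{prop_pureQ} close the argument. The principal obstacle is purely computational: the product $V_j^*V_j$ is \emph{not} Toeplitz and has a tridiagonal band structure, so one must carefully track the indices $h_{n-1}, h_n, h_{n+1}$ (and handle the boundary $n=0$) to ensure that each of the ten identities is generated by exactly one of the three structural requirements, with no double-counting and no missing relation.
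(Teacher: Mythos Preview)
Your proposal is correct and follows essentially the same route as the paper. The paper works with the block-matrix representation of $V_0,V_1,V_2,V_3$ on $\ell^2(\mathcal{L})\cong H^2(\mathcal{L})$, derives the commutator identities $(1)$--$(8)$ from the pairwise commutation of the blocks (exactly your symbol-multiplication argument), reads off $(9)$ and $(10)$ from the block form of $V_0^*V_0+V_1^*V_1$ (your tridiagonal Fourier-coefficient computation), and closes with Theorem \ref{thm_pure_E_iso}, Theorem \ref{thm_Q_isometry} and Proposition \ref{prop_pureQ} in the same order you indicate. Your remark that $V_1^*V_1=V_2^*V_2$ upgrades the $\B_2$-isometry to $(V_0,V_2)$ is a correct but inessential aside, and the boundary case $n=0$ introduces no new relation since the missing $h_{-1}$ term simply drops out.
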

	
	\begin{proof}
		We shall frequently use the natural isomorphism between the Hardy space $H^2(\mathcal{L})$ of $\mathcal{L}$-valued functions on the unit disc $\D$ and $\ell^2(\mathcal{L})$. We use this identification without any further mention. We can write the operators as  
		\begin{small} 	
			\begin{equation*}
				V_0=  \begin{bmatrix} 
					G_0 & 0 & 0 &  \dotsc \\
					G_1 & G_0 & 0 & \dotsc \\
					0 & G_1 & G_0 &  \dotsc\\
					\dotsc & \dotsc & \dotsc  & \dotsc \\
				\end{bmatrix}, \ 
				V_1=  \begin{bmatrix} 
					F_1^* & 0 & 0 &  \dotsc \\
					F_2 & F_1^* & 0 & \dotsc \\
					0 & F_2 & F_1^* &  \dotsc\\
					\dotsc & \dotsc & \dotsc  & \dotsc \\
				\end{bmatrix},  \ 
				V_2=  \begin{bmatrix} 
					F_2^* & 0 & 0 &  \dotsc \\
					F_1 & F_2^* & 0 & \dotsc \\
					0 & F_1 & F_2^* &  \dotsc\\
					\dotsc & \dotsc & \dotsc  & \dotsc \\
				\end{bmatrix},  \ 
				V_3=  \begin{bmatrix} 
					0 & 0 & 0 &  \dotsc \\
					I & 0 &  0 & \dotsc \\
					0 & I & 0 &  \dotsc\\
					\dotsc & \dotsc  & \dotsc & \dotsc \\
				\end{bmatrix}
			\end{equation*}
		\end{small}
		with respect to the decomposition	$\ell^2(\mathcal{L})=\mathcal{L} \oplus \mathcal{L} \oplus \dotsc$. It is easy to see that $V_3$ commutes with $V_0, V_1$ and $V_2$. A few steps of simple calculations give that the operators of the form
		\begin{equation*}
			\begin{bmatrix} 
				P & 0 & 0 &  \dotsc \\
				Q & P & 0 & \dotsc \\
				0 & Q & P &  \dotsc\\
				\dotsc & \dotsc & \dotsc  & \dotsc \\
			\end{bmatrix} \quad \text{and} \quad  
			\begin{bmatrix} 
				R & 0 & 0 &  \dotsc \\
				S & R & 0 & \dotsc \\
				0 & S & R &  \dotsc\\
				\dotsc & \dotsc & \dotsc  & \dotsc \\
			\end{bmatrix} \ 
		\end{equation*} 
		commute if and only if $[P, R]=[Q, S]=0,$ and $[Q, R]=[S, P]$. Consequently, $V_0V_1=V_1V_0$ if and only if conditions $(6)-(8)$ in the statement of the theorem hold. Similarly, $
		V_0V_2=V_2V_0$ if and only if conditions $(3)-(5)$ of the statement hold. The last commutativity condition is $V_1V_2=V_2V_1$ which holds if and only if conditions $(1)$ and $(2)$ of the statement hold. Again, some simple computations give that
		\[
		V_0^*V_0=\begin{bmatrix}
			G_0^*G_0+G_1^*G_1 & G_1^*G_0 & 0 & \dotsc \\
			G_0^*G_1 & G_0^*G_0+G_1^*G_1 & G_1^*G_0 & \dotsc \\ 
			0 & G_0^*G_1 & G_0^*G_0+G_1^*G_1 & \dotsc \\
			\dotsc & \dotsc & \dotsc & \dotsc \\
		\end{bmatrix}
		\]
		and  
		\[
		V_1^*V_1=\begin{bmatrix}
			F_1F_1^*+F_2^*F_2 & F_2^*F_1^* & 0 & \dotsc \\
			F_1F_2 & F_1F_1^*+F_2^*F_2 & F_2^*F_1^* & \dotsc \\ 
			0 & F_1F_2 & F_1F_1^*+F_2^*F_2 & \dotsc \\
			\dotsc & \dotsc & \dotsc & \dotsc \\
		\end{bmatrix}.
		\] 
		Hence, $V_0^*V_0+V_1^*V_1=I$ if and only if conditions $(9)$ and $(10)$ in the statement of the theorem holds. We use these equivalent conditions to obtain the desired conclusion.
		
		\smallskip

		Assume that $\underline{V}$ is a pure $\Q$-isometry. The commutativity hypothesis of the operators in $\underline{V}$ gives conditions $(1)-(8)$ of the statement.  By Theorem \ref{thm_Q_isometry}, $V_0^*V_0+V_1^*V_1=I$ which gives conditions $(9)-(10)$. It follows from Theorem \ref{thm_Q_isometry} and Remark \ref{rem_pure_E} that $(V_1, V_2, V_3)$ is a pure $\E$-isometry.  We have by Theorem \ref{thm_pure_E_iso} that  $\|F_1^*+F_2z\|_{\infty, \DC} \leq 1$. Conversely, conditions $(1), (2)$ and $\|F_1^*+F_2z\|_{\infty, \DC} \leq 1$ implies that $(V_1, V_2, V_3)$ is a pure $\E$-isometry by virtue of Theorem \ref{thm_pure_E_iso}. Also, conditions $(1)-(8)$ give that $\underline{V}$ is a commuting quadruple and conditions $(9)-(10)$ yield that $V_0^*V_0+V_1^*V_1=I$. By Theorem \ref{thm_Q_isometry} and Proposition \ref{prop_pureQ}, $\underline{V}$ is a pure $\Q$-isometry. The proof is now complete.
	\end{proof}

	In general, we do not know if the system of operator equations as in the statement of the above theorem admit a solution. However, we can find a solution $(G_0, G_1)$ when $F_1, F_2$ are commuting normal operators such that $\|F_1^*+F_2z\|_{\infty, \DC} \leq 1$.
	
	\begin{thm}\label{thm_fo_normalI}
		Let $F_1, F_2$ be commuting normal operators acting on a Hilbert space $\mathcal{L}$ such that $\|F_1^*+F_2z\|_{\infty, \DC} \leq 1$. Then there exists $G_0, G_1 \in \mathcal{B}(\mathcal{L})$ such that $(T_{G_0+G_1z}, T_{F_1^*+F_2z}, T_{F_2^*+F_1z}, T_z)$ on $H^2(\mathcal{L})$ is a pure $\Q$-isometry.
	\end{thm}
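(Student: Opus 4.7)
The plan is to construct $G_0, G_1$ within the commutative von Neumann algebra generated by $F_1, F_2$; this automatically takes care of the eight commutator-type conditions (1)--(8) of Theorem \ref{thm_pureQ_isom}, and the two remaining norm identities (9)--(10) are then arranged by a pointwise Fej\'{e}r--Riesz factorization in the functional calculus.

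First I would invoke Fuglede's theorem to conclude that $\{F_1, F_2, F_1^*, F_2^*\}$ pairwise commute, and let $\mathcal{M}$ be the abelian von Neumann algebra they generate. By the spectral theorem $\mathcal{M} \cong L^\infty(X,\mu)$, with $F_j$ corresponding to $f_j \in L^\infty(X,\mu)$. A routine computation of $\sup_{z \in \T} |\overline{f_1} + f_2 z|^2$ shows that the hypothesis $\|F_1^* + F_2 z\|_{\infty, \DC} \leq 1$ is equivalent to the pointwise bound $|f_1| + |f_2| \leq 1$ a.e. In particular, conditions (1) and (2) of Theorem \ref{thm_pureQ_isom} hold automatically from the commutativity and normality of $F_1, F_2$.

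Next, set $R := 1 - |f_1|^2 - |f_2|^2$ and $u := f_1 f_2$. The inequality $|f_1| + |f_2| \leq 1$ rearranges to $R \geq 2|u|$, so $R^2 - 4|u|^2 \geq 0$ a.e. Define measurable functions
\[
g_0 := \sqrt{\tfrac{R + \sqrt{R^2 - 4|u|^2}}{2}}, \qquad g_1 := -\varepsilon\, \sqrt{\tfrac{R - \sqrt{R^2 - 4|u|^2}}{2}},
\]
where $\varepsilon$ is a measurable $\DC$-valued function that agrees with $u/|u|$ on $\{u \neq 0\}$ and is $0$ elsewhere. The two values under the square roots are precisely the non-negative roots $t_\pm$ of $t^2 - Rt + |u|^2 = 0$, so by Vieta's formulas $t_+ + t_- = R$ and $t_+ t_- = |u|^2$. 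A short calculation then yields $|g_0|^2 + |g_1|^2 = R$ and $\overline{g_0}\, g_1 = -u$. Both $g_0, g_1$ are bounded and measurable, so they correspond to operators $G_0, G_1 \in \mathcal{M}$.

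Finally, since $G_0, G_1, F_1, F_2$ all lie in the abelian algebra $\mathcal{M}$, every commutator appearing in conditions (3)--(8) of Theorem \ref{thm_pureQ_isom} vanishes and these conditions reduce to $0=0$. Conditions (9) and (10) translate, via the functional calculus, to the two identities just established. Therefore Theorem \ref{thm_pureQ_isom} applies and the quadruple $(T_{G_0 + G_1 z}, T_{F_1^* + F_2 z}, T_{F_2^* + F_1 z}, T_z)$ is a pure $\Q$-isometry, as required. The only delicate point is the measurable selection of the phase $\varepsilon$ on $\{u \neq 0\}$, which is routine from the Borel structure of $\C\setminus\{0\}$.
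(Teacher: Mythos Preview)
Your approach is essentially identical to the paper's: both pass to the commutative spectral picture, show $|f_1|+|f_2|\le 1$ pointwise, and then solve conditions (9)--(10) of Theorem \ref{thm_pureQ_isom} by an explicit factorization, with the phase of $g_1$ chosen via a measurable unimodular function. The paper writes the solution as $f=\tfrac12(\alpha+\beta)$, $g=-\tfrac{z_1\overline{z}_2}{2|z_1z_2|}(\alpha-\beta)$ with $\alpha=\sqrt{1-(|z_1|-|z_2|)^2}$ and $\beta=\sqrt{1-(|z_1|+|z_2|)^2}$, which is exactly your $\sqrt{t_\pm}$ after a line of algebra.

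There is, however, a slip in your choice of $u$. Condition (10) of Theorem \ref{thm_pureQ_isom} reads $G_0^*G_1+F_1F_2^*=0$, which in the functional calculus becomes $\overline{g_0}\,g_1=-f_1\overline{f_2}$, not $-f_1f_2$. With your $u=f_1f_2$ and $\varepsilon=u/|u|$ you obtain $\overline{g_0}\,g_1=-f_1f_2$, which fails (10) whenever $f_2$ is not real-valued. The fix is immediate: take $u:=f_1\overline{f_2}$ (note $|u|=|f_1||f_2|$ is unchanged, so $R\ge 2|u|$ and your $t_\pm$ formulas still apply), and then $\varepsilon=u/|u|$ gives the correct phase. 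With this correction your argument is complete and matches the paper's.
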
 
	
	\begin{proof}
		It suffices to find $G_0, G_1 \in \mathcal{B}(\mathcal{L})$ satisfying all the equations in the statement of Theorem \ref{thm_pureQ_isom}. By spectral theorem, there exists a unique spectral measure $E$ on $\sigma_T(F_1, F_2)$ such that 
		\[
		F_1=\int_{\sigma_T(F_1, F_2)} \textbf{z}_1 dE \quad \text{and} \quad 	F_2=\int_{\sigma_T(F_1, F_2)} \textbf{z}_2dE, 
		\]
		where $\textbf{z}_1, \textbf{z}_2$ are the natural coordinate maps on $\C^2$. Let $(\alpha_1, \alpha_2) \in \sigma_T(F_1, F_2)$. We show that $|\alpha_1|+|\alpha_2| \leq 1$. Let $z \in \DC$.  We have by spectral mapping principle that $\overline{\alpha}_1+z\alpha_2 \in \sigma(F_1^*+F_2z)$. Since $\|F_1^*+F_2z\| \leq 1$, we have that $|\overline{\alpha}_1+z\alpha_2| \leq 1$ for all $z \in \DC$. Thus, $|\alpha_1| +|\alpha_2| \leq 1$ if at least one of $\alpha_1$ or $\alpha_2$ is zero. Assume that both $\alpha_1$ and $\alpha_2$ are non-zero. Define $\displaystyle z=\frac{\overline{\alpha}_2|\alpha_1|}{|\alpha_2|\alpha_1}$. Then
		$
|\alpha_1|+|\alpha_2|=|\overline{\alpha}_1+z\alpha_2| \leq 1
		$
		and so, $\sigma_T(F_1, F_2) \subseteq \{(\alpha_1, \alpha_2) \in \C^2 : |\alpha_1|+|\alpha_2| \leq 1\}$. Consequently, the real-valued maps $\alpha, \beta$ on $\sigma_T(F_1, F_2)$ given by
		$
		\displaystyle \alpha(z_1, z_2)=\sqrt{1-(|z_1|-|z_2|)^2}$ and $\displaystyle \beta(z_1, z_2)=\sqrt{1-(|z_1|+|z_2|)^2}$ are continuous. Consider the functions on $\sigma_T(F_1, F_2)$ defined as
		\[
		f(z_1, z_2)=\frac{1}{2}\left[\alpha(z_1, z_2)+\beta(z_1, z_2)\right] \quad \text{and} \quad 	g(z_1, z_2)=\left\{
		\begin{array}{ll}
			\displaystyle  \frac{-z_1z_2}{2|z_1z_2|}\left[\alpha(z_1, z_2)-\beta(z_1, z_2)\right], & z_1z_2 \neq 0 \\ \medskip
			0, & z_1z_2=0 \,.\\
		\end{array} 
		\right. 
		\]
		Clearly, $f$ is continuous on $\sigma_T(F_1, F_2)$. Also, $g$ is Borel measurable  on $\sigma_T(F_1, F_2)$ since the map
		\[
		h: \sigma_T(F_1, F_2) \to \C, \quad h(z_1, z_2)=\left\{
		\begin{array}{ll}
			\displaystyle  \frac{-z_1z_2}{|z_1z_2|}, & z_1z_2 \neq 0 \\ \medskip
			0, & z_1z_2=0 \,.\\
		\end{array} 
		\right. 
		\] 	
		is Borel measurable. A few tedious but routine computations give that
		\begin{equation}\label{eqn_norm}
			|f(z_1, z_2)|^2+|g(z_1, z_2)|^2=1-|z_1|^2-|z_2|^2 \quad \text{and} \quad \overline{f(z_1, z_2)}g(z_1, z_2)+z_1z_2=0
		\end{equation}
		for all $(z_1, z_2) \in \sigma_T(F_1, F_2)$. Let $G_0=f(F_1, F_2)$ and $G_1=g(F_1, F_2)$. Consequently,  $G_0, G_1, F_1, F_2$ are commuting normal operators and so, conditions $(1)-(8)$ in Theorem \ref{thm_pureQ_isom} hold. An application of the spectral theorem for commuting normal operators and \eqref{eqn_norm} gives that 
		$
		G_0^*G_0+G_1^*G_1=I-F_1^*F_1-F_2^*F_2$ and $ G_0^*G_1+F_1F_2=0$, which are conditions $(9)-(10)$ of Theorem \ref{thm_pureQ_isom}. 
	\end{proof}
	
	We present an example of an $\Q$-isometry with its last component being a pure isometry. 
	
	\begin{eg}
		Let $A, B$ be commuting isometries and consider the commuting quadruple 
		\[
		\underline{V}=(V_0, V_1, V_2, V_3)= \left(\frac{1}{2}(A-B), \frac{1}{2}(A+B), \frac{1}{2}(A+B), AB\right).
		\]
		Clearly, $V_1=V_2^*V_3$, $V_3$ is an isometry and $\|V_2\| \leq 1$. By Theorem \ref{thm_E_isometry}, $(V_1, V_2, V_3)$ is an $\E$-isometry. Evidently, 
				$V_0^*V_0+V_1^*V_1=I$. It now follows from Theorem \ref{thm_Q_isometry} that $\underline{V}$ is an $\Q$-isometry. Thus, $AB$ is a pure isometry if either $A$ or $B$ is a pure isometry. 	\qed 
	\end{eg}
	
It follows from Theorem \ref{thm_pure_E_iso} that a pure $\E$-isometry $(V_1, V_2, V_3)$ can be modeled as a commuting triple $(T_{F_1^*+F_2z}, T_{F_2^*+F_1z}, T_z)$ on $H^2(\mathcal{D}_{V_3^*})$, where $F_1, F_2$ are the fundamental operators of $(V_1^*, V_2^*, V_3^*)$. Further, if $F_1, F_2$ are commuting normal operators with $\|F_1^*+F_2z\|_{\infty, \DC} \leq 1$, then Theorem \ref{thm_fo_normalI} ensures the existence of operators $G_0, G_1$ on $\mathcal{D}_{V_3^*}$ such that $(T_{G_0+G_1z}, T_{F_1^*+F_2z}, T_{F_2^*+F_1z}, T_z)$ is a pure $\Q$-isometry. We conclude this section by showing that these conditions on the fundamental operators of an $\E$-contraction $(X_1, X_2, X_3)$ always hold when $X_1, X_2, X_3$ are normal operators. For this purpose, let us recall from \cite{Nagy} the notion of the characteristic function of a contraction $T$. For a contraction $T$ on a Hilbert space $\HS$, let $\Lambda_T$ be the collection of all complex numbers such that $I-zT^*$ is invertible. For $z \in \Lambda_T$, the characteristic function of $T$ is defined as 
\[
\Theta_{T}(z)=[-T+z D_{T^*}(I-zT^*)^{-1}D_{T}]|_{\mathcal{D}_{T}}.
\]
To the best of our knowledge, the following result has not appeared in the literature.

	\begin{prop}\label{prop_normal_E_fo}
		Let $(X_1, X_2, X_3)$ be an $\E$-contraction consisting of normal operators. Then its fundamental operators $F_1, F_2$ are commuting normal operators and $\|F_1^*+F_2z\|_{\infty, \DC} \leq 1$.
	\end{prop}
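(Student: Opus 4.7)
The plan is to construct $F_1, F_2$ explicitly via the Borel functional calculus on the commuting normal tuple $(X_1, X_2, X_3)$, and then verify the required properties directly. Since $\EC \subseteq \DC^3$ (Theorem \ref{closedtetrablock}), each $X_i$ is a normal contraction, and by Fuglede's theorem the tuple is doubly commuting. Consequently $D_{X_3}$ is normal and commutes with $X_1, X_2, X_3$ and their adjoints; the subspace $\ker D_{X_3}$ coincides with the joint spectral subspace of $(X_1, X_2, X_3)$ corresponding to $\{|z_3| = 1\}$, and both $\ker D_{X_3}$ and $\mathcal{D}_{X_3}$ are joint reducing subspaces. On $\ker D_{X_3}$, $X_3$ restricts to a unitary, so the restriction of $(X_1, X_2, X_3)$ there is an $\E$-unitary by Theorem \ref{thm_E_unitary}, giving $X_1 = X_2^* X_3$ and $X_2 = X_1^* X_3$ on that subspace.

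The key scalar observation uses Theorem \ref{closedtetrablock}(5): for every $(z_1, z_2, z_3) \in \EC$ one can write $z_1 = \beta_1 + \overline{\beta}_2 z_3$ and $z_2 = \beta_2 + \overline{\beta}_1 z_3$ with $|\beta_1| + |\beta_2| \leq 1$, and a direct substitution yields
\[
z_1 - \overline{z}_2 z_3 = \beta_1 (1 - |z_3|^2), \qquad z_2 - \overline{z}_1 z_3 = \beta_2 (1 - |z_3|^2).
\]
Thus on $\EC \cap \{|z_3| < 1\}$ the bounded Borel functions $\phi_1(z_1, z_2, z_3) = (z_1 - \overline{z}_2 z_3)/(1 - |z_3|^2)$ and $\phi_2(z_1, z_2, z_3) = (z_2 - \overline{z}_1 z_3)/(1 - |z_3|^2)$ coincide with $\beta_1$ and $\beta_2$, and satisfy $|\phi_1| + |\phi_2| \leq 1$. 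Extend $\phi_1, \phi_2$ to $\EC$ by zero on $\{|z_3| = 1\}$, and with $\tilde{X}_j := X_j|_{\mathcal{D}_{X_3}}$ define $F_j := \phi_j(\tilde{X}_1, \tilde{X}_2, \tilde{X}_3) \in \mathcal{B}(\mathcal{D}_{X_3})$ by the Borel functional calculus for this commuting normal tuple. The specific choice of extension is immaterial because the spectral measure of $(\tilde{X}_1, \tilde{X}_2, \tilde{X}_3)$ assigns zero mass to $\{|z_3| = 1\}$ by the definition of $\mathcal{D}_{X_3}$.

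To identify these as the fundamental operators, multiplication by $1 - |z_3|^2$ returns $\phi_j$ to the corresponding numerator, so the functional calculus gives $D_{X_3} F_1 D_{X_3} = X_1 - X_2^* X_3$ and $D_{X_3} F_2 D_{X_3} = X_2 - X_1^* X_3$ on $\mathcal{D}_{X_3}$ (and trivially on $\ker D_{X_3}$ since both sides vanish there). The uniqueness assertion in Theorem \ref{thm_fo} then identifies $F_1, F_2$ as the fundamental operators of $(X_1, X_2, X_3)$, and they are commuting normal operators because they arise from the same normal functional calculus. For the norm estimate, for each $z \in \DC$ the operator $F_1^* + F_2 z$ is obtained from $(\tilde{X}_1, \tilde{X}_2, \tilde{X}_3)$ via the functional calculus applied to the symbol $\overline{\phi_1} + z \phi_2$, and its norm is bounded by the essential supremum of this symbol on the support of the spectral measure, which is at most $\sup(|\phi_1| + |z||\phi_2|) \leq 1$. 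Hence $\|F_1^* + F_2 z\|_{\infty, \DC} \leq 1$. The main obstacle in this approach is justifying that a Borel calculus with symbols discontinuous along the boundary locus $\{|z_3| = 1\} \cap \EC$ still produces bounded operators on $\mathcal{D}_{X_3}$ satisfying the fundamental equations; this is resolved precisely because $\mathcal{D}_{X_3}$ is the spectral subspace avoiding this locus.
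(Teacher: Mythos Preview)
Your proof is correct and takes a genuinely different route from the paper's. The paper proceeds indirectly: it invokes the characteristic function $\Theta_{X_3}$ and an intertwining identity from \cite{Bhatt_Lata_Sau} to first obtain $F_iX_3 = X_3F_i$ on $\mathcal{D}_{X_3}$, then repeatedly applies Fuglede's theorem and Lemma \ref{lem_basic} together with computations involving $N = X_1 - X_2^*X_3$ and $M = X_2 - X_1^*X_3$ to extract normality and commutativity of $F_1, F_2$; the norm bound is obtained via the numerical radius estimate of Theorem \ref{thm_fo}, a separate lemma (Lemma 2.6 of \cite{Pal-IV}), and the maximum modulus principle. By contrast, you build $F_1, F_2$ directly as $\phi_1(\tilde X_1,\tilde X_2,\tilde X_3)$ and $\phi_2(\tilde X_1,\tilde X_2,\tilde X_3)$ via the Borel functional calculus, using the identity $z_j - \bar z_{3-j} z_3 = \beta_j(1-|z_3|^2)$ from Theorem \ref{closedtetrablock}(5). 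Commutativity, normality, and the bound $\|F_1^*+F_2z\|\le 1$ then fall out immediately from $|\phi_1|+|\phi_2|\le 1$ and standard properties of the functional calculus. Your approach is more self-contained (no characteristic function machinery, no external numerical radius lemma) and explains \emph{why} the fundamental operators inherit their properties: they are literally the $\beta_j$ coordinates in the tetrablock parametrization. The paper's approach, on the other hand, illustrates techniques (the intertwining relation \eqref{eqn_Lata} and the reduction via Lemma \ref{lem_basic}) that remain useful when normality is weakened and no joint functional calculus is available.
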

	
	\begin{proof}
		Since the last component of an $\E$-contraction is a contraction, we have that $\|X_3\| \leq 1$. Clearly, $D_{X_3}=D_{X_3^*}$ and so, $\mathcal{D}_{X_3}=\mathcal{D}_{X_3^*}$. Consider the characteristic function of $X_3$ given by
		\[
		\Theta_{X_3}(z)=[-X_3+zD_{X_3^*}(I-zX_3^*)^{-1}D_{X_3}]|_{\mathcal{D}_{X_3}} \quad \text{	for $z \in \D$}.
		\]
		Let $G_1, G_2$ be the fundamental operators of $(X_1^*, X_2^*, X_3^*)$. It follows from Theorem 3 in \cite{Bhatt_Lata_Sau} that 
		\begin{align}\label{eqn_Lata}
			(G_1^*+G_2z)\Theta_{X_3}(z)=\Theta_{X_3}(z)(F_1+F_2^*z) \quad \text{and} \quad (G_2^*+G_1z)\Theta_{X_3}(z)=\Theta_{X_3}(z)(F_2+F_1^*z)
		\end{align}  
		for all $z \in \D$. Since $X_1, X_2, X_3$ are commuting normal operators, we have that
		\[
		D_{X_3}G_1D_{X_3}=(X_1-X_2^*X_3)^*=D_{X_3}F_1^*D_{X_3} \quad \text{and} \quad 	D_{X_3}G_2D_{X_3}=(X_2-X_1^*X_3)^*=D_{X_3}F_2^*D_{X_3}.
		\]
		By uniqueness of fundamental operators, $(G_1, G_2)=(F_1^*, F_2^*)$. Substituting $z=0$ in \eqref{eqn_Lata}, we have
		\begin{align}\label{eqn_504}
			F_1X_3=X_3F_1 \quad \text{and} \quad F_2X_3=X_3F_2 \quad \text{on $\mathcal{D}_{X_3}$}.
		\end{align}
		Since $X_1, X_2, X_3$ are commuting normal operators, $D_{X_3}X_j=X_jD_{X_3}$ for $1 \leq j \leq 3$ and so, $\mathcal{D}_{X_3}$ is a joint reducing subspace for $X_1, X_2, X_3$. By \eqref{eqn_504} and Fuglede's theorem \cite{Fuglede}, we have that 
		\begin{align}\label{eqn_505}
			F_1^*X_3=X_3F_1^* \quad \text{and} \quad F_2^*X_3=X_3F_2^* \quad \text{on $\mathcal{D}_{X_3}$}.
		\end{align}
		It follows from \eqref{eqn_504} and \eqref{eqn_505} that
		$D_{X_3}^2$ doubly commutes with $F_1$ and $F_2$. By Lemma \ref{lem_basic}, we have
		\begin{align}\label{eqn_506}
			D_{X_3}F_j=F_jD_{X_3} \quad \text{and} \quad D_{X_3}F_j^*=F_j^*D_{X_3}  \quad \text{on $\mathcal{D}_{X_3}$}
		\end{align}
		for$j=1, 2$. For the normal operators $N=X_1-X_2^*X_3$ and $M=X_2-X_1^*X_3$, we have that
		\begin{align*}
			\begin{split}
				N^*N=(X_1^*-X_2X_3^*)(X_1-X_2^*X_3)
				=D_{X_3}F_1^*D_{X_3}^2F_1D_{X_3}
				=F_1^*F_1D_{X_3}^4 \quad [\text{by} \ \eqref{eqn_506}],
			\end{split}
		\end{align*}
		and 
		\begin{align*}
			\begin{split}
				M^*M=(X_2^*-X_1X_3^*)(X_2-X_1^*X_3)
				=D_{X_3}F_2^*D_{X_3}^2F_2D_{X_3}
				=F_2^*F_2D_{X_3}^4 \quad [\text{by} \ \eqref{eqn_506}].
			\end{split}
		\end{align*}
		Similarly, one can prove that $NN^*=F_1F_1^*D_{X_3}^4$ and $MM^*=F_2F_2^*D_{X_3}^4$. Thus, $F_i^*F_iD_{X_3}^4=F_iF_i^*D_{X_3}^4$ and by Lemma \ref{lem_basic}, $F_i^*F_iD_{X_3}=F_iF_i^*D_{X_3}$ for $i=1,2$. Since $F_1, F_2$ are operators on $\mathcal{D}_{X_3}$, it follows that $F_1$ and $F_2$ are normal operators. It is clear that $NM=MN$ since $X_1, X_2, X_3$ are commuting normal operators. Also, we have
		\begin{align*} 
			\begin{split}
				NM=(X_1-X_2^*X_3)(X_2-X_1^*X_3)
				=D_{X_3}F_1D_{X_3}^2F_2D_{X_3}
				=F_1F_2D_{X_3}^4 \quad [\text{by} \ \eqref{eqn_506}]
			\end{split}
		\end{align*}
		and 
		\begin{align*} 
			\begin{split}
				MN=(X_2-X_1^*X_3)(X_1-X_2^*X_3)
				=D_{X_3}F_2D_{X_3}^2F_1D_{X_3}
				=F_2F_1D_{X_3}^4 \quad [\text{by} \ \eqref{eqn_506}]
			\end{split}
		\end{align*}
		which gives that $F_2F_1D_{X_3}^4=F_1F_2D_{X_3}^4$. Again by Lemma \ref{lem_basic},  $F_2F_1D_{X_3}=F_1F_2D_{X_3}$ and so, $F_1F_2=F_2F_1$. It is remaining to show that $\|F_1^*+F_2z\|_{\infty, \DC} \leq 1$. Since $F_1, F_2$ are fundamental operators of $(X_1, X_2, X_3)$, the operator $F_1+F_2z$ has numerical radius at most $1$ for all $z \in \DC$. It follows from Lemma 2.6 in \cite{Pal-IV} that $\omega(F_1^*+F_2z) \leq 1$ for all $z \in \T$. Since $F_1^*+F_2z$ is a normal operator for any $z \in \DC$, its norm is same as its numerical radius and so, $\|F_1^*+F_2z\| \leq 1$ for all $z \in \T$. The desired conclusion now follows from the maximum modulus principle.
	\end{proof}
	
\section{An explicit $\Q$-isometric dilation and a concrete functional model for a class of $\Q$-contractions}\label{sec_dilation}

\vspace{0.2cm}

\noindent The success or failure of rational dilation on a domain is always an interesting yet highly challenging problem. There are various domains in the literature that have been studied in this context. For example, rational dilation succeeds on the bidisc $\D^2$ (see \cite{Ando, Nagy}), and on the symmetrized bidisc $\G_2$ (see \cite{AglerII, Bhattacharyya}). It is still unknown if the rational dilation succeeds on the tetrablock (see \cite{BallSau, PalI}) or on the pentablock (see \cite{PalN}). Nevertheless, conditional dilation results for the tetrablock were obtained in \cite{Bhattacharyya-01, Pal-IV}, and for the pentablock in \cite{PalN}. In this section, we provide a conditional dilation theorem for a subclass of $\Q$-contractions $(A, X_1, X_2, X_3)$ acting on a Hilbert space $\HS$ such that the last component $X_3$ is a pure contraction. Let us recall that a pure contraction $T$ is a contraction such that $X_3^{*n} \to 0$ strongly as $n \to \infty$. It was shown in \cite{Nagy} that every pure contraction $T$ defined on a Hilbert space $\HS$ is unitarily equivalent to the operator $P_{\mathcal{K}_T}(T_z \otimes I)|_{\mathcal{K}_T}$ on the Hilbert space 
\[
\mathcal{K}_T=(H^2(\D)\otimes \mathcal{D}_{T^*})\ominus M_{\Theta_T}(H^2(\D)\otimes \mathcal{D}_T),
\]  
where $T_z$ is the multiplication operator on $H^2(\D)$ and $M_{\Theta_T}$ is the multiplication operator from $H^2(\D) \otimes \mathcal{D}_T$ into $H^2(\D) \otimes \mathcal{D}_{T^*}$ corresponding to the multiplier $\Theta_T$, the characteristic function of $T$. Motivated by this, the first named author of this article established the following dilation theorem for a subclass of $\E$-contractions with the last component being a pure contraction.

\begin{thm}[\cite{Pal-IV}, Theorem 3.2]\label{thm_pure_E_dilation}
	Let $(X_1, X_2, X_3)$ be an $\E$-contraction with $X_3$ being a pure contraction on a Hilbert space $\HS$. Let $G_1, G_2$ be the fundamental operators of $(X_1^*, X_2^*, X_3^*)$ satisfying $[G_1, G_2]=0$ and $[G_1^*, G_1]=[G_2^*, G_2]$. Then the operator triple 
	\begin{equation}\label{eqn_709}
		\left(I\otimes G_1^*+T_z \otimes G_2, \ I\otimes G_2^*+T_z \otimes G_1, \ T_z \otimes I \right)
	\end{equation}
	on $H^2(\D) \otimes \mathcal{D}_{X_3^*}$ is a minimal pure $\E$-isometric dilation of $(X_1, X_2, X_3)$. 
\end{thm}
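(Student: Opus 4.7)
The plan is to verify three things in order: (a) $(W_1, W_2, W_3) := (I \otimes G_1^* + T_z \otimes G_2,\ I \otimes G_2^* + T_z \otimes G_1,\ T_z \otimes I)$ is a pure $\E$-isometry on $H^2(\D) \otimes \mathcal{D}_{X_3^*}$, (b) it dilates $(X_1, X_2, X_3)$, and (c) the dilation is minimal. For (a), I identify $H^2(\D) \otimes \mathcal{D}_{X_3^*}$ with $H^2(\mathcal{D}_{X_3^*})$ so that $W_1, W_2, W_3$ become the analytic Toeplitz operators $T_{G_1^* + zG_2}$, $T_{G_2^* + zG_1}$, and $T_z$. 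With $F_i := G_i$, the two commutator hypotheses of the theorem are exactly condition (1) of Theorem \ref{thm_pure_E_iso}. The remaining norm bound $\|G_1^* + zG_2\|_{\infty, \DC} \leq 1$ follows by the same route as in the last part of the proof of Proposition \ref{prop_normal_E_fo}: Theorem \ref{thm_fo} applied to the $\E$-contraction $(X_1^*, X_2^*, X_3^*)$ gives $\omega(G_1 + G_2 z) \leq 1$ on $\DC$, and passage to the symbol $G_1^* + G_2 z$ on $\T$ together with the maximum modulus principle delivers the needed estimate, with the commutator hypothesis $[G_1^*, G_1] = [G_2^*, G_2]$ playing the role that full normality played in Proposition \ref{prop_normal_E_fo}. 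Theorem \ref{thm_pure_E_iso} then yields (a).

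For (b), I define the Sz.--Nagy--Foias type embedding $\mathcal{O} : \HS \to H^2(\D) \otimes \mathcal{D}_{X_3^*}$ by
\[
\mathcal{O}(h) \;=\; \sum_{n=0}^{\infty} z^n \otimes D_{X_3^*} X_3^{*n} h .
\]
The telescoping identity $\sum_{n=0}^{N} \|D_{X_3^*} X_3^{*n} h\|^2 = \|h\|^2 - \|X_3^{*(N+1)} h\|^2$ together with the purity of $X_3$ (i.e., $X_3^{*n} \to 0$ strongly) makes $\mathcal{O}$ an isometry. I then check the three intertwining identities $W_j^* \mathcal{O} = \mathcal{O} X_j^*$ for $j = 1, 2, 3$; these make $\mathcal{O}(\HS)$ jointly coinvariant under $(W_1, W_2, W_3)$ and therefore force $X_1^{s_1} X_2^{s_2} X_3^{s_3} = \mathcal{O}^* W_1^{s_1} W_2^{s_2} W_3^{s_3} \mathcal{O}$ for all $s \in (\N \cup \{0\})^3$. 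For $j = 3$ this is a direct index shift. For $j = 1$, the pair of auxiliary equations in Theorem \ref{thm_fo} applied to $(X_1^*, X_2^*, X_3^*)$ gives $D_{X_3^*} X_1^* = G_1 D_{X_3^*} + G_2^* D_{X_3^*} X_3^*$, and using $[X_1^*, X_3^*] = 0$ I get
\[
D_{X_3^*} X_3^{*n} X_1^* h \;=\; G_1 D_{X_3^*} X_3^{*n} h + G_2^* D_{X_3^*} X_3^{*(n+1)} h ,
\]
which matches the coefficient at $z^n$ of $W_1^* \mathcal{O}(h) = (I \otimes G_1 + T_z^* \otimes G_2^*) \mathcal{O}(h)$ after reindexing the second summand. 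The $j = 2$ identity is symmetric under $G_1 \leftrightarrow G_2$.

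Minimality (c) is essentially automatic. Since $X_3^{*n} \to 0$ strongly, $W_3 = T_z \otimes I$ on $H^2(\D) \otimes \mathcal{D}_{X_3^*}$ is (unitarily equivalent to) the minimal isometric dilation of $X_3$ on this very space. In fact the $W_3$-orbit alone already fills the dilation space: at each Fourier degree $k$, the coefficient of $z^k$ in $W_3^n \mathcal{O}(h) = \sum_m z^{m+n} \otimes D_{X_3^*} X_3^{*m} h$ ranges (as $h$ varies and $m + n = k$ with $m, n \geq 0$) over $D_{X_3^*}(\HS)$, whose closure equals $\mathcal{D}_{X_3^*}$. Hence $\overline{\text{span}}\{W_3^n \mathcal{O}(h) : n \geq 0,\ h \in \HS\} = H^2(\D) \otimes \mathcal{D}_{X_3^*}$.

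I expect the main technical obstacle to be the verification of the intertwining identities in step (b), because the Toeplitz form of $W_1$ and $W_2$ mixes $G_i$ with $G_j^*$ and produces an index shift from $T_z^*$; the match relies on the exact pair of auxiliary operator equations from Theorem \ref{thm_fo} rather than the fundamental equations themselves, and must be done term-by-term. A subtler concern is the upgrade in step (a) from the numerical radius bound supplied by Theorem \ref{thm_fo} to the operator norm bound on the Toeplitz symbol, which is precisely where the second commutator hypothesis $[G_1^*, G_1] = [G_2^*, G_2]$ is indispensable.
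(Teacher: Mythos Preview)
Your proposal is correct and matches the approach of the cited result from \cite{Pal-IV}; the paper itself does not reproduce the proof but, in the very next theorem, invokes exactly your embedding $\mathcal{O}$ (there called $W$) and the identity $W^*(z^n\otimes y)=X_3^nD_{X_3^*}y$, confirming that your route via this isometry together with the intertwining relations from Theorem~\ref{thm_fo} is the intended one. One small sharpening: in step~(a) the normality of $G_1^*+G_2z$ on $\T$ uses \emph{both} commutator hypotheses (the vanishing of $[G_1,G_2]$ kills the cross terms, while $[G_1^*,G_1]=[G_2^*,G_2]$ kills the diagonal ones), not just the second.
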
 
As an application of Theorem \ref{thm_pure_E_dilation}, we produce a minimal $\Q$-isometric dilation for a subclass of $\Q$-contractions with their last components being a pure contraction. As a consequence of this dilation, we obtain a functional model for such $\Q$-contractions $(A, X_1, X_2, X_3)$ in terms of commuting Toeplitz operators on the vectorial Hardy space $H^2(\mathcal{D}_{X_3^*})$. We begin with a few preparatory results associated with $\Q$-contractions.

\begin{prop}
	An $\Q$-contraction $(A, X_1, X_2, X_3)$ admits an $\Q$-isometric dilation if and only if it has a minimal $\Q$-isometric dilation.
\end{prop}

\begin{proof}
	The converse is trivial. We prove that the forward part. Suppose $(A, X_1, X_2, X_3)$ is an $\Q$-contraction on a Hilbert space $\HS$. Let $(V, V_1, V_2, V_3)$ acting on a Hilbert space $\mathcal{K} \supseteq \mathcal{H}$ be an $\Q$-isometric dilation of $(A, X_1, X_2, X_3)$. Consider the space  given by
	\[
	\mathcal{K}_0=\overline{\text{span}}\left\{V^{i}V_1^{j}V_2^{k}V_3^{\ell}h : \ h \in \mathcal{H} \ \text{and} \ i, j, k, \ell \in \mathbb{N} \cup \{0\} \right\}.
	\] 
	Clearly, $\mathcal{K}_0$ is a joint invariant subspace for $V, V_1, V_2, V_3$ and $\HS \subseteq \mathcal{K}_0 \subseteq \mathcal{K}$. Let us define 
	$(W, W_1, W_2, W_3)=(V|_{\mathcal{K}_0}, V_1|_{\mathcal{K}_0}, V_2|_{\mathcal{K}_0}, V_3|_{\mathcal{K}_0})$. Then 
	\[
	A^{i}X_1^{j}X_2^{k}X_3^{\ell}=P_\mathcal{H}W^{i}W_1^{j}W_2^{k}W_3^{\ell}|_\mathcal{H}\]
	for all $i, j, k, \ell \in \N \cup \{0\}$. Since $(V, V_1, V_2, V_3)$ on $\mathcal{K}$ is an $\Q$-isometry, there is an $\Q$-unitary $(U, U_1, U_2, U_3)$ acting on a Hilbert space $\widetilde{\mathcal{K}}$ containing $\mathcal{K}$  such that $\mathcal{K}$ is a joint invariant subspace for $(U, U_1, U_2, U_3)$ and		$
	(V, V_1, V_2, V_3)=(U|_{\mathcal{K}}, U_1|_{\mathcal{K}}, U_2|_{\mathcal{K}}, U_3|_{\mathcal{K}})$. Thus, we have that
	\[
	(W, W_1, W_2, W_3)=(V|_{\mathcal{K}_0}, V_2|_{\mathcal{K}_0}, V_3|_{\mathcal{K}_0}, V_4|_{\mathcal{K}_0})=(U|_{\mathcal{K}_0}, U_1|_{\mathcal{K}_0}, U_2|_{\mathcal{K}_0}, U_3|_{\mathcal{K}_0})
	\]
	and so, $(W, W_1, W_2, W_3)$ on $\mathcal{K}_0$ is a minimal $\Q$-isometric dilation of $(A, X_1, X_2, X_3)$.
\end{proof}

\begin{prop}\label{prop_min_coext}
	Let $(V, V_1, V_2, V_3)$ acting on a Hilbert space $\mathcal{K}$ be an $\Q$-isometric dilation of an $\Q$-contraction $(A, X_1, X_2, X_3)$ acting on a Hilbert space $\mathcal{H}$. If $(V, V_1, V_2, V_3)$ is a minimal $\Q$-isometric dilation, then $(V^*, V_1^*, V_2^*, V_3^*)$ is an $\Q$-co-isometric extension of $(A^*, X_1^*, X_2^*, X_3^*)$.
\end{prop}

\begin{proof}
	
	Assuming the minimality of $(V, V_1, V_2, V_3)$, we have that
	\[
	\mathcal{K}=\overline{\text{span}}\left\{V^{i}V_1^{j}V_2^{k}V_3^{\ell}h : \ h \in \mathcal{H} \ \text{and} \ i, j, k, \ell \in \mathbb{N} \cup \{0\} \right\}.
	\]
	Let $h \in \mathcal{H}$. Then
	\[
	AP_\mathcal{H}(V^{i}V_1^{j}V_2^{k}V_3^{\ell}h)=A(A^{i}X_1^{j}X_2^{k}X_3^{\ell}h)=A^{i+1}X_1^{j}X_2^{k}X_3^{\ell}h=P_\mathcal{H}(V^{i+1}V_1^{j}V_2^{k}V_3^{\ell}h)=P_\mathcal{H}V(V^{i}V_1^{j}V_2^{k}V_3^{\ell}h)
	\]	
	and so, $AP_\mathcal{H}=P_\mathcal{H}V$. Also, one can show that $X_nP_\mathcal{H}=P_\mathcal{H}V_n$ for $n=1,2,3$. Moreover, for $h \in \mathcal{H}$ and $k \in \mathcal{K}$, we have that
	$
	\langle A^*h, k \rangle =\langle A^*h, P_\mathcal{H}k \rangle =\langle h, AP_\mathcal{H}k \rangle =\langle h, P_\mathcal{H}Vk \rangle =\langle V^*h, k \rangle. 
	$
	Therefore, $A^*=V^*|_\mathcal{H}$ and similarly $X_n^*=V_n^*|_\mathcal{H}$ for $n=1, 2, 3$. The proof is complete. 
\end{proof}

We now present our dilation theorem, whose proof is based on the conditional dilation result for $\E$-contractions stated in Theorem \ref{thm_pure_E_dilation}.

\begin{thm}\label{thm_Q_pure}
	Let $(A, X_1, X_2, X_3)$ be an $\Q$-contraction on a Hilbert space $\HS$ with $X_3$ being a pure contraction. Let $G_1, G_2$ be the fundamental operators of $(X_1^*, X_2^*, X_3^*)$. Suppose there exist $A_0, A_1$ in $\mathcal{B}(\mathcal{D}_{X_3^*})$ such that the following hold.
	
	\begin{minipage}[t]{0.25\textwidth}
		\smallskip
		\begin{equation*}
			\begin{split}
				& 1. \ [G_1, G_2]=0,\\
				& 4. \ [G_1, A_0]=[A_1, G_2^*] , \\
				& 7.  \ [G_2, A_0]=[A_1, G_1^*] , \\
				& 10.  \ A_0^*A_1+G_1G_2=0, \\
			\end{split}
		\end{equation*}	
	\end{minipage}
	\begin{minipage}[t]{0.35\textwidth}
		\smallskip
		\begin{equation*}
			\begin{split}
				& 2. \ [G_1^*, G_1]=[G_2^*, G_2],\\
				& 5.  \ [G_1, A_1]=0 , \\
				& 8. \ [G_2, A_1]=0,\\ 
				& 11.  \ AD_{X_3^*}=D_{X_3^*}A_0+X_3D_{X_3^*}A_1.
			\end{split}
		\end{equation*}	
	\end{minipage}
	\begin{minipage}[t]{0.3\textwidth}
		\smallskip
		\begin{equation*}
			\begin{split}
				& 3. \ [G_2^*, A_0]=0,\\
				& 6.  \ [G_1^*, A_0]=0, \\
				& 9.  \ A_0^*A_0+A_1^*A_1=I-G_1G_1^*-G_2^*G_2 , \\
			\end{split}
		\end{equation*}	
	\end{minipage}
$ $\\
Then the operator quadruple 	
	\begin{equation*}
		(V, V_1, V_2, V_3)=\left(I\otimes A_0+T_z \otimes A_1, \ I\otimes G_1^*+T_z \otimes G_2, \ I\otimes G_2^*+T_z \otimes G_1, \ T_z \otimes I  \right)
	\end{equation*}
	on $H^2(\D) \otimes \mathcal{D}_{X_3^*}$ is a minimal $\Q$-isometric dilation of $(A, X_1, X_2, X_3)$. 
\end{thm}

\begin{proof}
	The minimality follows trivially if we prove $(V, V_1, V_2, V_3)$ is an $\Q$-isometric dilation of $(A, X_1, X_2, X_3)$. This is because $V_3$ on $H^2(\D) \otimes \mathcal{D}_{X_3^*}$ is the minimal isometric dilation of $X_3$. Since $X_3$ is a pure contraction, it follows from the proof of Theorem 3.2 in \cite{Pal-IV} that the map given by
	\[
	W: \HS \to H^2(\D) \otimes \mathcal{D}_{X_3^*}, \quad Wh=\overset{\infty}{\underset{n=0}{\sum}}z^n \otimes D_{X_3^*}X_3^{*n}h.
	\]
	is an isometry. Also, for a basis vector $z^n \otimes y$ of $H^2(\D) \otimes \mathcal{D}_{X_3^*}$, we have that 
	\begin{equation}\label{eqn_W*}
		W^*(z^n \otimes y)=X_3^nD_{X_3^*}y, \quad \text{for} \ n=0, 1, 2, \dotsc.
	\end{equation} 
	Indeed, it was proved in Theorem 3.2 of \cite{Pal-IV} that $V_j^*|_{W(\HS)}=WX_j^*W^*|_{W(\HS)}$ for $j=1, 2, 3$ and so, $(V_1, V_2, V_3)$ is an $\E$-isometric dilation of $(X_1, X_2, X_3)$. Consequently, $V_2$ is a contraction and so,  $\|G_1^*+G_2z\|_{\infty, \DC} \leq 1$. It is easy to see that $(V, V_1, V_2, V_3)$ is unitarily equivalent to the quadruple $(T_{A_0+A_1z}, T_{G_1^*+G_2z}, T_{G_2^*+G_1z}, T_z)$ on $H^2(\mathcal{D}_{X_3^*})$ via the natural identification map. It is evident that conditions $(1)$-$(10)$ in the statement of this theorem and that of Theorem \ref{thm_pureQ_isom} are same. Thus, by Theorem \ref{thm_pureQ_isom}, $(V, V_1, V_2, V_3)$ is a pure $\Q$-isometry. For a basis vector $z^n \otimes y$ of $H^2(\D) \otimes \mathcal{D}_{X_3^*}$, we have that
	\begin{align*}
		\begin{split}
			W^*V(z^n \otimes y)
			=W^*(z^n \otimes A_0y)+W^*(z^{n+1}\otimes A_1y)
			&=X_3^nD_{X_3^*}A_0y+X_3^{n+1}D_{X_3^*}A_1y \qquad \quad \ [\text{by} \ \eqref{eqn_W*}]\\
			&=X_3^n(D_{X_3^*}A_0+X_3D_{X_3^*}A_1)y\\
			&=X_3^nAD_{X_3^*}y \qquad \qquad \qquad  \ \  [\text{by condition $(11)$}] \\
			&=AX_3^nD_{X_3^*}y\\
			&=AW^*(z^n \otimes y). \qquad  \qquad \qquad \qquad \quad [\text{by} \ \eqref{eqn_W*}]\\
		\end{split}
	\end{align*}
	Therefore, $W^*V=AW^*$ and so, $V^*|_{W(\HS)}=WA^*W^*|_{W(\HS)}$. The proof is now complete.
\end{proof}

We achieve the following operator model for a subclass of $\Q$-contractions as a consequence of a minimal $\Q$-isometric dilation obtained in Theorem \ref{thm_Q_pure}. In particular, any such $\Q$-contraction can be realized as the restriction of an $\Q$-co-isometry with last component being a pure co-isometry.

\begin{thm}\label{thm_func_model}
		Let $(A, X_1, X_2, X_3)$ be an $\Q$-contraction on a Hilbert space $\HS$ such that $X_3^n \to 0$ strongly as $n \to \infty$. Let $G_1, G_2$ be the fundamental operators of $(X_1, X_2, X_3)$. Suppose there exist $A_0, A_1$ in $\mathcal{B}(\mathcal{D}_{X_3})$ such that the following hold.
	
	\begin{minipage}[t]{0.25\textwidth}
		\smallskip
		\begin{equation*}
			\begin{split}
				& 1. \ [G_1, G_2]=0,\\
				& 4. \ [G_1, A_0]=[A_1, G_2^*] , \\
				& 7.  \ [G_2, A_0]=[A_1, G_1^*] , \\
				& 10.  \ A_0^*A_1+G_1G_2=0, \\
			\end{split}
		\end{equation*}	
	\end{minipage}
	\begin{minipage}[t]{0.35\textwidth}
		\smallskip
		\begin{equation*}
			\begin{split}
				& 2. \ [G_1^*, G_1]=[G_2^*, G_2],\\
				& 5.  \ [G_1, A_1]=0 , \\
				& 8. \ [G_2, A_1]=0,\\ 
				& 11.  \ A^*D_{X_3}=D_{X_3}A_0+X_3^*D_{X_3}A_1.
			\end{split}
		\end{equation*}	
	\end{minipage}
	\begin{minipage}[t]{0.3\textwidth}
		\smallskip
		\begin{equation*}
			\begin{split}
				& 3. \ [G_2^*, A_0]=0,\\
				& 6.  \ [G_1^*, A_0]=0, \\
				& 9.  \ A_0^*A_0+A_1^*A_1=I-G_1G_1^*-G_2^*G_2 , \\
			\end{split}
		\end{equation*}	
	\end{minipage}
	$ $\\
	Suppose $(N, N_1, N_2, N_3)=(V^*, V_1^*, V_2^*, V_3^*)$, where the quadruple $(V, V_1, V_2, V_3)$ is given by	
	\begin{equation*}
		(V, V_1, V_2, V_3)=\left(I\otimes A_0+T_z \otimes A_1, \ I\otimes G_1^*+T_z \otimes G_2, \ I\otimes G_2^*+T_z \otimes G_1, \ T_z \otimes I  \right)
	\end{equation*}
	on $\mathcal{K}=H^2(\D) \otimes \mathcal{D}_{X_3}$ and 	let 
	\[
	W: \HS \to H^2(\D) \otimes \mathcal{D}_{X_3}, \quad Wh=\overset{\infty}{\underset{n=0}{\sum}}z^n \otimes D_{X_3}X_3^{n}h.
	\]
	Then $(N, N_1, N_2, N_3)$ is  an $\Q$-co-isometry with $N_3$ being a pure co-isometry and $W$ is an isometry such that $(A, X_1, X_3, X_3)=(N|_{W(\HS)}, N_1|_{W(\HS)}, N_2|_{W(\HS)}, N_3|_{W(\HS)})$. 
\end{thm}

\begin{proof}
	It follows from Lemma \ref{basiclem:01} that the commuting quadruple $(B, Y_1, Y_2, Y_3)=(A^*, X_1^*, X_2^*, X_3^*)$ acting on $\HS$ is an $\Q$-contraction. By hypothesis, $Y_3$ is a pure contraction and $G_1, G_2$ are the fundamental operators of $(Y_1^*, Y_2^*, Y_3^*)$. Again by hypothesis, there exist $A_0, A_1 \in \mathcal{B}(\mathcal{D}_{Y_3^*})$ such that the operator equations associated with $(B, Y_1, Y_2, Y_3)$ in Theorem \ref{thm_Q_pure} are satisfied. We have by Theorem \ref{thm_Q_pure} that $W$ is an isometry and the operator quadruple on $H^2(\D) \otimes \mathcal{D}_{X_3}$	
	\begin{equation*}
		(V, V_1, V_2, V_3)=\left(I\otimes A_0+T_z \otimes A_1, \ I\otimes G_1^*+T_z \otimes G_2, \ I\otimes G_2^*+T_z \otimes G_1, \ T_z \otimes I  \right)
	\end{equation*}
	is a minimal $\Q$-isometric dilation of $(A^*, X_1^*, X_2^*, X_3^*)$. By Proposition \ref{prop_min_coext}, $(V^*, V_1^*, V_2^*, V_3^*)$ is an $\Q$-co-isometric extension of $(A, X_1, X_2, X_3)$, which completes the proof.
\end{proof}

Next, we provide an example of $\Q$-contraction satisfying the operator equations in Theorem \ref{thm_func_model}.

\begin{eg}
	Let $r=1\slash 2$ and consider the quadruple $(A, X_1, X_2, X_3)=\left(rI, rI, rI, 0\right)$ on a Hilbert space $\HS$. Note that $P=(p_{ij})=\begin{pmatrix}
		r & r \\
		r & r
	\end{pmatrix}$ has norm $1$ and $(r, r, r, 0)=(p_{21}, p_{11}, p_{22}, \det(P))$. By the definition of $\mathbb H_N$ as mentioned in the `Introduction', it follows that $(r, r, r, 0) \in \overline{\mathbb H}_N$. Since $\overline{\Q}_N \subseteq \CQ$ (see Theorem 6.3 in \cite{Biswas}), we have that $(r, r, r, 0) \in \CQ$ and thus $(A, X_1, X_2, X_3)$ is an $\Q$-contraction. The fundamental operators of $(X_1, X_2, X_3)$ are given by $(G_1, G_2)=(rI, rI)$. It is not difficult to see that the system of operator equations as in Theorem \ref{thm_func_model} admits a solution $(A_0, A_1)=(rI, -rI)$.  Furthermore, Theorem \ref{thm_Q_pure} ensures that a minimal pure $\Q$-isometric dilation of $(A, X_1, X_2, X_3)$ is given by 
$(V, V_1, V_2, V_3)=\left(I\otimes A_0+T_z \otimes A_1, \ I\otimes G_1^*+T_z \otimes G_2, \ I\otimes G_2^*+T_z \otimes G_1, \ T_z \otimes I  \right)$ on $H^2(\D) \otimes \mathcal{D}_{X_3}$.  Equivalently, we can write 
	\begin{small} 	
	\begin{equation*}
		V=  \begin{bmatrix} 
			rI & 0 & 0 &  \dotsc \\
			-rI & rI & 0 & \dotsc \\
			0 & -rI & rI &  \dotsc\\
			\dotsc & \dotsc & \dotsc  & \dotsc \\
		\end{bmatrix}, \ 
		V_1= 	V_2=  \begin{bmatrix} 
			rI & 0 & 0 &  \dotsc \\
			rI & rI & 0 & \dotsc \\
			0 & rI & rI &  \dotsc\\
			\dotsc & \dotsc & \dotsc  & \dotsc \\
		\end{bmatrix},  \ 
		V_3=  \begin{bmatrix} 
			0 & 0 & 0 &  \dotsc \\
			I & 0 &  0 & \dotsc \\
			0 & I & 0 &  \dotsc\\
			\dotsc & \dotsc  & \dotsc & \dotsc \\
		\end{bmatrix}
	\end{equation*}
\end{small}
with respect to the decomposition	$\ell^2(\mathcal{D}_{X_3})=\mathcal{D}_{X_3} \oplus \mathcal{D}_{X_3} \oplus \dotsc$. It is easy to see that $(A, X_1, X_2, X_3)=(V^*|_{\HS}, V_1^*|_{\HS}, V_2^*|_{\HS}, V_3^*|_{\HS})$, which is the functional model constructed in Theorem \ref{thm_func_model}.
\qed
\end{eg}

As mentioned earlier, the problem of rational dilation remains open for contractions associated with $\E$ and $\Pe$. In contrast, rational dilation holds for $\Gamma$-contractions and for commuting pairs of contractions, thereby providing canonical classes of $\Q$-contractions that admit $\Q$-isometric dilations, as proved in the next two results.

\begin{prop}
	Every $\Q$-contraction of the form $(A, 0, 0, X_3)$ admits an $\Q$-isometric dilation. \end{prop}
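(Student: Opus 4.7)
The plan is to reduce the problem to Ando's classical dilation theorem for a commuting pair of contractions and then invoke the correspondence between $\Q$-unitaries and commuting pairs of unitaries established in Corollary \ref{cor_P_Q_uni}.

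First, I would use Proposition \ref{prop2.8} to translate the hypothesis: $(A, 0, 0, X_3)$ being an $\Q$-contraction on $\HS$ is equivalent to $(A, X_3)$ being a commuting pair of contractions on $\HS$. Next, I would appeal to Ando's dilation theorem (see Chapter I of \cite{Nagy}), which furnishes a Hilbert space $\mathcal{K} \supseteq \HS$ and a commuting pair of unitaries $(U, W)$ on $\mathcal{K}$ such that
\[
A^i X_3^{\ell} = P_{\HS} U^i W^{\ell}\big|_{\HS} \qquad \text{for all } i, \ell \in \N \cup \{0\}.
\]
By part (2) of Corollary \ref{cor_P_Q_uni}, the quadruple $(U, 0, 0, W)$ is then an $\Q$-unitary on $\mathcal{K}$, and hence, trivially, an $\Q$-isometry.

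Finally, I would verify the dilation identity. For any quadruple of non-negative integers $(i, j, k, \ell)$, if either $j \ge 1$ or $k \ge 1$, then both $A^i \, 0^j \, 0^k \, X_3^{\ell}$ and $U^i \, 0^j \, 0^k \, W^{\ell}$ vanish identically, so the compression identity is automatic. Otherwise $j = k = 0$ and the identity reduces to the Ando relation $A^i X_3^{\ell} = P_{\HS} U^i W^{\ell}|_{\HS}$ recalled above. Consequently,
\[
A^i \, 0^j \, 0^k \, X_3^{\ell} = P_{\HS} \, U^i \, 0^j \, 0^k \, W^{\ell} \big|_{\HS} \qquad \text{for all } i, j, k, \ell \in \N \cup \{0\},
\]
showing that $(U, 0, 0, W)$ is an $\Q$-isometric (indeed, $\Q$-unitary) dilation of $(A, 0, 0, X_3)$.

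There is no serious obstacle in this argument: the entire content sits inside Ando's theorem, and the only thing to check is that the two ``trivial coordinates'' do not spoil the dilation identity, which they do not. If a minimal dilation is desired, one can subsequently restrict $(U, 0, 0, W)$ to the smallest joint invariant subspace of $U$ and $W$ containing $\HS$, which preserves the $\Q$-isometric property by part (2) of Corollary \ref{cor_P_Q_iso} together with the fact that restricting commuting unitaries to a joint invariant subspace yields a commuting pair of isometries.
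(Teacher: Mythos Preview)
Your proof is correct and follows essentially the same route as the paper: reduce via Proposition \ref{prop2.8} to a commuting pair of contractions, apply Ando's dilation theorem, and then use the correspondence in Corollary \ref{cor_P_Q_uni} (the paper uses the isometric version, Corollary \ref{cor_P_Q_iso}, but this is an inessential variation). Your explicit verification of the dilation identity on the zero coordinates is a nice touch that the paper leaves implicit.
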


\begin{proof}
	Assume that $(A, 0 , 0, X_3)$ is an  $\Q$-contraction. By Theorem \ref{thm_connectII},  $(A, X_3)$ is a commuting pair of contractions.  A well-known result due to Ando (see Chapter I of \cite{Nagy}) gives that $(A, X_3)$ dilates to a pair of commuting isometries $(V, V_3)$. Finally, we have by Corollary \ref{cor_P_Q_iso}  that $(V, 0, 0, V_3)$ is an $\Q$-isometric dilation of $(A, 0, 0, X_3)$.
\end{proof}

The success of $\Gamma$-isometric dilation of a $\Gamma$-contraction provides another class of $\Q$-contractions admitting an $\Q$-isometric dilation.

\begin{prop}
	Every $\Q$-contraction of the form $(0, S\slash 2, S\slash 2, P)$ admits an $\Q$-isometric dilation. \end{prop}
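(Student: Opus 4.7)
The plan is to reduce to the classical $\Gamma$-unitary dilation theorem on the symmetrized bidisc and then promote that dilation to an $\Q$-isometric dilation by an $\ell^2$-inflation. Since $(0, S/2, S/2, P)$ is an $\Q$-contraction, part (4) of Theorem \ref{thm_connectII} gives that $(S, P)$ is a $\Gamma$-contraction on $\HS$. The classical theorem for the symmetrized bidisc (see \cite{AglerII, Bhattacharyya}) then provides a Hilbert space $\mathcal{K} \supseteq \HS$ and a $\Gamma$-unitary $(T, V)$ on $\mathcal{K}$ that dilates $(S, P)$; by Theorem \ref{Gamma_uni}, $V$ is unitary, $T = T^*V$ with $\|T\| \leq 2$, and $T$ is normal (since $TT^* = TV^*T = T^*T$ because $V$ is unitary).

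Next, I would set $W = (I - T^*T/4)^{1/2}$, a positive contraction on $\mathcal{K}$ that commutes with $T$ and, via Fuglede's theorem together with the continuous functional calculus, with $V$. On the enlarged space $\mathcal{K}' = \ell^2(\mathbb{N}; \mathcal{K})$, define $V_0'$ to be the weighted forward shift $(x_0, x_1, x_2, \ldots) \mapsto (0, Wx_0, Wx_1, \ldots)$, and set $V_1' = V_2' = I \otimes (T/2)$ and $V_3' = I \otimes V$. Routine checks show that these four operators commute, that $(V_1', V_2', V_3')$ is an $\E$-unitary by Theorem \ref{thm_E_unitary} applied coordinatewise to $(T/2, T/2, V)$, and that $(V_0', V_1')$ is a $\B_2$-isometry since $V_0'^*V_0' + V_1'^*V_1'$ equals $W^2 + T^*T/4 = I$ on each copy of $\mathcal{K}$. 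Theorem \ref{thm_Q_isometry} then yields that $(V_0', V_1', V_2', V_3')$ is an $\Q$-isometry on $\mathcal{K}'$.

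Embedding $\HS$ as the zeroth coordinate of $\mathcal{K}'$ via $h \mapsto (h, 0, 0, \ldots)$, the dilation identity splits into two cases. For $i = 0$, compressions of monomials in $V_1', V_2', V_3'$ reduce to compressions of $(T/2)^{j+k} V^l$, which match $(S/2)^{j+k} P^l$ by the $\Gamma$-dilation property of $(T, V)$. For $i \geq 1$, an induction shows $(V_0')^i(h, 0, \ldots)$ has all its mass in the $i$-th coordinate, so the compression back to $\HS$ vanishes, matching $0^i \cdot (\cdots) = 0$.

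The main obstacle is that the naive assignment $(T, V) \mapsto (0, T/2, T/2, V)$ does not produce an $\Q$-isometry, because $(0, T/2)$ is a $\B_2$-isometry only when $T^*T = 4I$, which typically fails. Passing to $\ell^2(\mathcal{K})$ and replacing the ``zero'' slot by a shift with amplitude $W$ is precisely what supplies the missing defect so that $V_0'^*V_0' + V_1'^*V_1' = I$, while at the same time the shift structure guarantees $P_\HS (V_0')^i|_\HS = 0$ for all $i \geq 1$, which is the key identity needed to recover a genuine dilation of the original quadruple rather than merely an $\Q$-isometry on a larger space.
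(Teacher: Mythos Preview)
Your proof is correct and takes a genuinely different route from the paper's argument. The paper proceeds abstractly: it uses the $\Gamma$-unitary dilation $(T,U)$ of $(S,P)$ to verify that $\overline{\Q}$ is a \emph{complete} spectral set for $(0,S/2,S/2,P)$, by pulling back any matrix of rational functions through the embedding $(z_1,z_2)\mapsto(0,z_1/2,z_1/2,z_2)$ and invoking the complete spectral property of $\Gamma$ for $(T,U)$; Arveson's dilation theorem then supplies the required $\Q$-isometric dilation non-constructively. By contrast, you build the dilation explicitly: inflate to $\ell^2(\mathbb{N};\mathcal{K})$, keep $(T/2,T/2,V)$ diagonal, and replace the zero in the first slot by the defect-weighted shift with amplitude $W=(I-T^*T/4)^{1/2}$ so that $V_0'^*V_0'+V_1'^*V_1'=I$. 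Your observation that the shift structure forces $P_\HS (V_0')^i|_\HS=0$ for $i\geq 1$ is exactly what makes the quadruple a dilation rather than merely an ambient $\Q$-isometry. The paper's approach is shorter and yields complete spectrality as a byproduct; yours is self-contained (no Arveson), produces a concrete model, and in fact dovetails nicely with the paper's own construction of $\Q$-isometries from $\E$-unitaries via the defect operator (Proposition~\ref{prop_BE_uni}), of which your $W$ is the $\Gamma$-unitary analogue.
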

\begin{proof}
	Let $(0, S\slash 2, S\slash 2, P)$ be an $\Q$-contraction on a Hilbert space $\HS$. Take a matricial polynomial $f=[f_{ij}]$ and define $g=[g_{ij}]$ with $g_{ij}(z_1, z_2)=f_{ij}(0, z_1\slash 2, z_1\slash 2, z_2)$ for $1 \leq i, j \leq n$. We have by Theorem \ref{thm_connectII} that  $(S, P)$ is a $\Gamma$-contraction. Furthermore, Theorem 4.3 in \cite{Bhattacharyya} ensures the existence of a $\Gamma$-unitary dilation $(T, U)$ of $(S, P)$. It follows from Arveson's dilation theorem (see Theorem 1.2.2 in \cite{ArvesonII}) that $\Gamma$ is a complete spectral set for $(T, U)$ and so, the matricial von Neumann's inequality \eqref{eqn_vN_matrix} holds for $(T, U)$. Also, by Theorem \ref{thm_connectII}, $(0, T\slash 2, T\slash 2, U)$ is a normal $\Q$-contraction that dilates $(0, S \slash 2, S\slash 2, P)$. Then
	\begin{align*}
		\begin{split} 
			\left\|[f_{ij}(0, S\slash 2, S\slash 2, P)]\right\|
			\leq \left\|[f_{ij}(0, T\slash 2, T\slash 2, U)]\right\|
			&=\left\|[g_{ij}(T, U)]\right\|\\
			& \leq \sup \{\|[g_{ij}(z_1, z_2)]\| : (z_1, z_2) \in \Gamma \}\\
			& =\sup \{\|[f_{ij}(0, z_1\slash 2, z_1\slash 2, z_2)]\| : (z_1, z_2) \in \Gamma \}\\
			& \leq \sup\{\|[f_{ij}(\underline{z})] \| : \underline{z} \in \CQ \},
		\end{split}
	\end{align*} 
	where the last inequality follows from Theorem \ref{thm_connect}. Hence, $\CQ$ is a complete spectral set for the $\Q$-contraction $(0, S\slash 2, S\slash 2, P)$. The desired conclusion follows from Arveson's dilation theorem.
\end{proof}

While the problem of rational dilation on $\E$ and $\Pe$ remains open, we establish an equivalent criterion for rational dilation on these domains in terms of rational dilation on the hexablock. 

\begin{prop}\label{prop_dil_E}
	An $\E$-contraction $(X_1, X_2, X_3)$ admits an $\E$-isometric dilation if and only if the $\Q$-contraction $(0, X_1, X_2, X_3)$ admits an $\Q$-isometric dilation.
\end{prop}

\begin{proof}
	Let $(X_1, X_2, X_3)$ be an $\E$-contraction and let $(N_1, N_2, N_3)$ be its $\E$-isometric dilation. Take $f=[f_{ij}]$ be a matricial polynomial and consider $g=[g_{ij}]$ with $g_{ij}(z_1, z_2, z_3)=f_{ij}(0, z_1, z_2, z_3)$. By Theorem \ref{thm_connectII}, both $(0, X_1, X_2, X_3)$ and $(0, N_1, N_2, N_3)$ are $\Q$-contractions. It follows from Arveson's dilation theorem that $\overline{\E}$ is a complete spectral set for $(X_1, X_2, X_3)$. Then
	\begin{align*}
		\|[f_{ij}(0, X_1, X_2, X_3)]\|=\|[g_{ij}(X_1, X_2, X_3)]\| 
		& \leq \sup\{\|[g_{ij}(z_1, z_2, z_3)]\|: (z_1, z_2, z_3) \in \EC\}\\
		& \leq \sup\{\|[f_{ij}(0, z_1, z_2, z_3)]\|: (z_1, z_2, z_3) \in \EC \}\\
		& \leq \sup\{\|[f_{ij}(\underline{z})]\| : \underline{z} \in \CQ\},
		\end{align*} 
where we have used Theorem \ref{thm_connect} in the last inequality. Consequently, $\CQ$ is a complete spectral set for $(0, X_1, X_2, X_3)$ and by Arveson's dilation theorem, $(0, X_1, X_2, X_3)$ admits an $\Q$-isometric dilation. For the converse, one can directly apply Theorems \ref{thm_connectII} \& \ref{thm_Q_isometry}. 
\end{proof}

We have an analogous result for $\B_2$-contractions. 

\begin{prop}
	A $\B_2$-contraction $(A, X_1)$ admits a $\B_2$-isometric dilation if and only if $(A, X_1, 0, 0)$ admits an $\Q$-isometric dilation. 
\end{prop}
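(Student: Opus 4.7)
The plan is to mirror the proof of Proposition \ref{prop_711}, exploiting the embedding $\overline{\B}_2 \hookrightarrow \overline{\Q}$ via $(z_0, z_1) \mapsto (z_0, z_1, 0, 0)$ from part (1) of Theorem \ref{thm_connect} together with Arveson's dilation theorem.

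I would first handle the (easier) converse. Suppose $(A, X_1, 0, 0)$ admits an $\Q$-isometric dilation $(W_0, W_1, W_2, W_3)$ acting on a Hilbert space $\mathcal{K} \supseteq \HS$. By Theorem \ref{thm_Q_isometry}, $(W_0, W_1)$ is a $\B_2$-isometry. Moreover, specializing the dilation identity $A^{\alpha_0}X_1^{\alpha_1} 0^{\alpha_2} 0^{\alpha_3} = P_{\HS}W_0^{\alpha_0}W_1^{\alpha_1}W_2^{\alpha_2}W_3^{\alpha_3}|_{\HS}$ to $\alpha_2 = \alpha_3 = 0$ yields $A^{\alpha_0}X_1^{\alpha_1} = P_{\HS} W_0^{\alpha_0}W_1^{\alpha_1}|_{\HS}$ for all $\alpha_0, \alpha_1 \in \N \cup \{0\}$, so $(W_0, W_1)$ is a $\B_2$-isometric dilation of $(A, X_1)$.

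For the forward direction, assume $(A, X_1)$ has a $\B_2$-isometric dilation $(V, V_1)$ on some Hilbert space $\mathcal{K}\supseteq \HS$. Given $[f_{ij}] \in M_n(\mathrm{Rat}(\overline{\Q}))$, I would set $h_{ij}(z_0, z_1) = f_{ij}(z_0, z_1, 0, 0)$; by part (1) of Theorem \ref{thm_connect}, $(z_0, z_1) \in \overline{\B}_2$ implies $(z_0, z_1, 0, 0) \in \overline{\Q}$, so each $h_{ij} \in \mathrm{Rat}(\overline{\B}_2)$. Since $(V, V_1)$ is a $\B_2$-isometry, it extends to a $\B_2$-unitary, and $\overline{\B}_2$ (being polynomially convex) is a complete spectral set for $(V, V_1)$ by Arveson's dilation theorem. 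Chaining the standard inequalities
\begin{equation*}
\big\|[f_{ij}(A, X_1, 0, 0)]\big\| = \big\|[h_{ij}(A, X_1)]\big\| \leq \big\|[h_{ij}(V, V_1)]\big\| \leq \sup_{(z_0, z_1)\in \overline{\B}_2}\big\|[h_{ij}(z_0, z_1)]\big\| \leq \sup_{\underline{z}\in \overline{\Q}}\big\|[f_{ij}(\underline{z})]\big\|,
\end{equation*}
I conclude that $\overline{\Q}$ is a complete spectral set for $(A, X_1, 0, 0)$. A further appeal to Arveson's dilation theorem (together with Theorem \ref{thm_connectII}, which already guarantees that $(A, X_1, 0, 0)$ is an $\Q$-contraction) then supplies a normal $b\Q$-dilation, hence an $\Q$-isometric dilation, of $(A, X_1, 0, 0)$.

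The argument has no substantive obstacle; it is essentially a transcription of the proof of Proposition \ref{prop_711} with the role of $\E$ replaced by $\B_2$ and part (3) of Theorem \ref{thm_connect} replaced by part (1). The only minor care-point is to verify that the rational function $h_{ij}$ inherits a denominator that is non-vanishing on $\overline{\B}_2$, which follows from the inclusion $\overline{\B}_2 \times \{(0,0)\} \subseteq \overline{\Q}$.
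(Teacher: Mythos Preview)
Your proposal is correct and follows essentially the same route as the paper's proof: both directions rely on the embedding $\overline{\B}_2 \times \{(0,0)\} \subseteq \overline{\Q}$ from Theorem \ref{thm_connect}, Theorem \ref{thm_Q_isometry} for the converse, and Arveson's dilation theorem for the forward direction. The only cosmetic difference is that the paper applies Arveson's theorem directly to $(A, X_1)$ (concluding that $\overline{\B}_2$ is a complete spectral set for it) rather than routing the matricial inequality through $(V, V_1)$, which saves the intermediate step $\|[h_{ij}(A, X_1)]\| \leq \|[h_{ij}(V, V_1)]\|$.
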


\begin{proof}
	Let $(A, X_1)$ be a $\B_2$-contraction and let $(V, V_1)$ be its $\B_2$-isometric dilation. Take $f=[f_{ij}]$ be a matricial polynomial and consider $g=[g_{ij}]$ with $g_{ij}(z_1, z_2)=f_{ij}(z_1, z_2, 0, 0)$. By Theorem \ref{thm_connectII}, $(A, X_1, 0, 0)$ and $(V, V_1, 0, 0)$ are $\Q$-contractions. Since $(A, X_1)$ admits a rational dilation to $(V, V_1)$, we have by Arveson's dilation theorem that  $\BC$ is a complete spectral set for $(A, X_1)$. Then
	\begin{align*}
		\begin{split} 
			\left\|[f_{ij}(A, X_1, 0, 0)]\right\|
			=\left\|[g_{ij}(A, X_1)]\right\|
			& \leq \sup \{\|[g_{ij}(z_1, z_2)]\| : (z_1, z_2) \in \BC \}\\
			& =\sup \{\|[f_{ij}(z_1, z_2,0, 0)]\| : (z_1, z_2) \in \BC \}\\
			& \leq \sup\{\|[f_{ij}(\underline{z})] \| : \underline{z} \in \CQ \},
		\end{split}
	\end{align*} 
	where the last inequality follows from Theorem \ref{thm_connect}. Hence, $\CQ$ is a complete spectral set for $(A, X_1, 0, 0)$ and so, by Arveson's dilation theorem, it admits an $\Q$-isometric dilation. The converse follows directly from Theorems \ref{thm_connectII} \& \ref{thm_Q_isometry}. 
\end{proof}

Similarly, one can prove the next result as an application of Theorem \ref{thm_connectII} and Corollary \ref{cor_P_Q_iso}.

\begin{prop}\label{prop_dil_P}
	A $\Pe$-contraction $(A, S, P)$ admits a $\Pe$-isometric dilation if and only if the $\Q$-contraction $(A, S\slash 2, S\slash 2, P)$ admits an $\Q$-isometric dilation.
\end{prop}
At present, no domain in $\C^3$ is known for which the rational dilation problem has an affirmative answer. Our wild guess is that rational dilation fails on the tetrablock and the pentablock. However, Propositions \ref{prop_dil_E} \& \ref{prop_dil_P} show that if rational dilation fails on either the tetrablock or the pentablock, then it must also fail on the hexablock.

	\section{Canonical decomposition of an $\Q$-contraction}\label{sec_06}
	
	\vspace{0.2cm}

	\noindent A canonical decomposition of contraction (see Theorem 3.2 in Chapter I of \cite{Nagy}) that every contraction $T$ on a Hilbert space $\HS$ admits a canonical decomposition $T_1\oplus T_2$ with respect to $\HS=\HS_1 \oplus \HS_2$, where $T_1$ is a unitary and $T_2$ is a completely non-unitary contraction. The maximal reducing subspace $\HS_1$ on which $T$ acts as a unitary is given by
	\begin{equation*}
		\begin{split}
			\mathcal{H}_1 =\{h \in \mathcal{H}: \|T^nh\|=\|h\|=\|T^{*n}h\|, \ n=1,2, \dotsc \} = \underset{n \in \mathbb{Z}}{\bigcap} Ker D_{T(n)},\\	
		\end{split}
	\end{equation*}
	where
	\[
	D_{T(n)}= \left\{
	\begin{array}{ll}
		(I-T^{*n}T^n)^{1\slash 2} & n \geq 0 \\
		(I-T^{|n|}T^{*|n|})^{1\slash 2} & n <0 \,.\\
	\end{array} 
	\right. 
	\]
	A similar result is true for a doubly commuting pair of contractions as the following result shows.
	
	\begin{thm}[\cite{Pal-II}, Theorem 4.2]\label{lem5.2}
		For a pair of doubly commuting contractions $P,Q$ acting on a Hilbert space $\HS$, if $Q=Q_1 \oplus Q_2$ is the canonical decomposition of $Q$ with respect to the orthogonal decomposition $\HS=\HS_1 \oplus \HS_2$, then $\mathcal{H}_1, \mathcal{H}_2$ are reducing subspaces for $P$.
	\end{thm}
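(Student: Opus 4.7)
The plan is to show that $\HS_1$ is reducing for $P$, since this will automatically make $\HS_2 = \HS \ominus \HS_1$ reducing for $P$ as well. Using the description
\[
\HS_1 = \bigcap_{n \in \Z} \ker D_{Q(n)}
\]
recalled just before the statement, the task reduces to checking that $Ph$ and $P^*h$ both lie in $\HS_1$ for every $h \in \HS_1$, which in turn comes down to showing that $P$ and $P^*$ each commute with every defect operator $D_{Q(n)}$.

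The engine of the argument is the double commutation hypothesis. From $PQ = QP$ and $PQ^* = Q^*P$, taking adjoints yields $P^*Q = QP^*$ and $P^*Q^* = Q^*P^*$, so both $P$ and $P^*$ commute with every monomial in $Q$ and $Q^*$. In particular, for every $n \in \Z$ they commute with $D_{Q(n)}^2$, which equals $I - Q^{*n}Q^n$ for $n \geq 0$ and $I - Q^{|n|}Q^{*|n|}$ for $n < 0$. Since $\|Q\| \leq 1$ gives $\|Q^k\|, \|Q^{*k}\| \leq 1$, Lemma \ref{lem_basic} applied with exponent $2$ and with $T$ the appropriate power of $Q$ (or $Q^*$) promotes each of these squared commutations to
\[
P D_{Q(n)} = D_{Q(n)} P \quad \text{and} \quad P^* D_{Q(n)} = D_{Q(n)} P^* \qquad (n \in \Z).
\]

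With this commutation in hand, for any $h \in \HS_1$ and any $n \in \Z$ one has
\[
D_{Q(n)}(Ph) = P D_{Q(n)} h = 0, \qquad D_{Q(n)}(P^* h) = P^* D_{Q(n)} h = 0,
\]
so both $Ph$ and $P^* h$ lie in $\bigcap_{n \in \Z} \ker D_{Q(n)} = \HS_1$, whence $\HS_1$ reduces $P$ and so does $\HS_2$. The only subtle point is the passage from commutation with $D_{Q(n)}^2$ to commutation with $D_{Q(n)}$ itself, which is exactly the content of Lemma \ref{lem_basic}; the double-commutation hypothesis is used in full strength, since without commutation with $Q^*$ one could not move $P$ past $Q^{*k}Q^k$ and the whole scheme would collapse.
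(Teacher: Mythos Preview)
The paper does not supply its own proof of this statement: it is quoted verbatim as Theorem 4.2 of \cite{Pal-II} and used as a black box, so there is no in-paper argument to compare against. Your proof is correct and self-contained. The identification $\HS_1=\bigcap_{n\in\Z}\ker D_{Q(n)}$ recalled just above the theorem does the heavy lifting, and the passage from $P D_{Q(n)}^2=D_{Q(n)}^2 P$ to $P D_{Q(n)}=D_{Q(n)} P$ via Lemma \ref{lem_basic} (with $T=Q^k$ or $T=Q^{*k}$, both contractions) is legitimate. One cosmetic remark: you do not even need Lemma \ref{lem_basic} here, since commuting with $D_{Q(n)}^2$ already forces $\ker D_{Q(n)}^2=\ker D_{Q(n)}$ to be invariant under $P$ and $P^*$ (the kernel of a positive operator equals the kernel of its square), which is all that is required.
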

	
	We recall from \cite{Pal2016} an analogue of canonical decomposition for an $\E$-contraction $(X_1, X_2, X_3)$. In fact, such a decomposition of $(X_1, X_2, X_3)$ is can be obtained through the canonical decomposition of the contraction $X_3$ as the following theorem shows.
	
	\begin{thm}[\cite{Pal2016}, Theorem 3.1]\label{thm5.3}
		Let $(X_1, X_2, X_3)$ be an $\E$-contraction on a Hilbert space $\mathcal{H}$. Let $\mathcal{H}_1$ be the maximal subspace of $\mathcal{H}$ which reduces $X_3$ and on which $X_3$ is unitary. Let $\mathcal{H}_2=\mathcal{H} \ominus \mathcal{H}_1$. Then $\mathcal{H}_1, \mathcal{H}_2$ reduce $X_1, X_2$. Moreover, $(X_1|_{\mathcal{H}_1}, X_2|_{\mathcal{H}_1}, X_3|_{\mathcal{H}_1})$ is an $\E$-unitary and $(X_1|_{\mathcal{H}_2}, X_2|_{\mathcal{H}_2}, X_3|_{\mathcal{H}_2} )$ is a completely non-unitary $\E$-contraction.
	\end{thm}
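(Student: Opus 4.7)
The plan is to obtain the decomposition of $(X_1, X_2, X_3)$ from the Sz.-Nagy--Foias canonical decomposition of the contraction $X_3$, combined with the fundamental equations of Theorem \ref{thm_fo} applied both to $(X_1, X_2, X_3)$ and to the adjoint $\E$-contraction $(X_1^*, X_2^*, X_3^*)$. Write $\mathcal{H} = \mathcal{H}_1 \oplus \mathcal{H}_2$ with $U := X_3|_{\mathcal{H}_1}$ a unitary and $T := X_3|_{\mathcal{H}_2}$ completely non-unitary, and express
\[
X_j = \begin{bmatrix} A_j & B_j \\ C_j & D_j \end{bmatrix}, \qquad j = 1, 2,
\]
with respect to this splitting. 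The commutation $X_j X_3 = X_3 X_j$ immediately yields $U B_j = B_j T$ and $T C_j = C_j U$. The whole task is to show $B_j = C_j = 0$, since then $\mathcal{H}_1$ and $\mathcal{H}_2$ reduce $X_1$ and $X_2$ and the rest is routine.

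The crucial step is to extract a second, ``adjoint'' intertwining for each off-diagonal block from the fundamental equations. Because $D_{X_3}$ and $D_{X_3^*}$ vanish on $\mathcal{H}_1$, their block forms are $\mathrm{diag}(0, D_T)$ and $\mathrm{diag}(0, D_{T^*})$, so the right-hand sides of $X_1 - X_2^* X_3 = D_{X_3} F_1 D_{X_3}$, $X_2 - X_1^* X_3 = D_{X_3} F_2 D_{X_3}$, and their adjoint counterparts $X_1 - X_3 X_2^* = D_{X_3^*} G_1^* D_{X_3^*}$, $X_2 - X_3 X_1^* = D_{X_3^*} G_2^* D_{X_3^*}$, sit only in the $(2,2)$ slot. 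Reading the $(1,2)$ and $(2,1)$ entries yields the paired identities $C_1 = B_2^* U$ and $C_1 = T B_2^*$, together with $B_2 = C_1^* T$ (and symmetric equations for the other off-diagonal blocks). Substituting $B_2^* = T^* C_1$ into $C_1 = T B_2^*$ gives $T T^* C_1 = C_1$, while substituting it into $B_2^* U = T B_2^*$ gives $T^* C_1 = C_1 U^*$. The symmetric manipulation produces $T B_j^* = B_j^* U$ and $T^* B_j^* = B_j^* U^*$, and analogously $T^* C_j = C_j U^*$ for $j = 1, 2$.

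With both intertwinings $T C_j = C_j U$, $T^* C_j = C_j U^*$ in hand, $\overline{\mathrm{ran}(C_j)}$ is a reducing subspace of $T$, and $T^* T C_j = C_j = T T^* C_j$ shows that $\mathrm{ran}(C_j) \subseteq \ker D_T \cap \ker D_{T^*}$, so $T$ acts as a unitary on $\overline{\mathrm{ran}(C_j)}$. Since $T$ is completely non-unitary this subspace is $\{0\}$, so $C_j = 0$; the same argument applied to $\overline{\mathrm{ran}(B_j^*)}$ via $T B_j^* = B_j^* U$, $T^* B_j^* = B_j^* U^*$ gives $B_j = 0$. Therefore $\mathcal{H}_1$ and $\mathcal{H}_2$ are joint reducing subspaces of $X_1, X_2, X_3$. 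Since $\overline{\E}$ is polynomially convex, restriction of an $\E$-contraction to a joint reducing subspace is an $\E$-contraction (by Proposition \ref{basicprop:01}, exactly as in Lemma \ref{basiclem:01}). On $\mathcal{H}_1$ the third component is a unitary, so Theorem \ref{thm_E_unitary} identifies $(X_1|_{\mathcal{H}_1}, X_2|_{\mathcal{H}_1}, X_3|_{\mathcal{H}_1})$ as an $\E$-unitary. On $\mathcal{H}_2$, any joint reducing subspace on which the restricted triple acted as an $\E$-unitary would force $X_3$ to be unitary on it (Theorem \ref{thm_E_unitary} again), contradicting the maximality of $\mathcal{H}_1$; hence the restriction to $\mathcal{H}_2$ is a completely non-unitary $\E$-contraction.

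I expect the main obstacle to be the middle step, where both the original and the adjoint families of fundamental equations are essential: commutativity alone gives only one intertwining $T C_j = C_j U$ for each off-diagonal block, while the opposite intertwining $T^* C_j = C_j U^*$ that makes $\overline{\mathrm{ran}(C_j)}$ reducing and forces $T$ to act unitarily there is genuinely produced by pairing the two fundamental identities. Once both intertwinings are available, the c.n.u.\ hypothesis on $T$ does the rest.
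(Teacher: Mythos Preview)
Your argument is correct. The paper does not supply its own proof of this statement; it is quoted from \cite{Pal2016} and used as a black box in Section~\ref{sec_06}. So there is nothing in the present paper to compare against directly.

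That said, your route is clean and fully self-contained with the tools recalled here. The key idea---that the fundamental equations of Theorem~\ref{thm_fo} for $(X_1,X_2,X_3)$ force the $(1,2)$ and $(2,1)$ blocks of $X_j-X_k^*X_3$ to vanish because $D_{X_3}=\mathrm{diag}(0,D_T)$, and the adjoint equations for $(X_1^*,X_2^*,X_3^*)$ do the same for $X_j-X_3X_k^*$---is exactly what produces the second intertwining $T^*C_j=C_jU^*$ you need. Together with $TC_j=C_jU$ from commutativity, this makes $\overline{\mathrm{ran}\,C_j}$ a reducing subspace of $T$ on which $T^*T=TT^*=I$, hence trivial since $T$ is c.n.u.; then $B_k=C_j^*T=0$ follows immediately. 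One small point worth making explicit in the write-up: the adjoint triple $(X_1^*,X_2^*,X_3^*)$ is indeed an $\E$-contraction (the argument of Lemma~\ref{basiclem:01} carries over verbatim since $\overline{\E}$ is polynomially convex and stable under complex conjugation), so Theorem~\ref{thm_fo} applies to it and the operators $G_1,G_2$ exist.
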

	
	We now prove a canonical decomposition of an $\Q$-contraction. The proof is divided into two parts where in the first part we obtain the decomposition result for a normal $\Q$-contraction.	
	
	\begin{prop}\label{normal}
		Let $(N_0, N_1, N_2, N_3)$ be a normal $\Q$-contraction acting on a Hilbert space $\HS$. Then there exists an orthogonal decomposition $\mathcal{H}=\mathcal{H}^{(u)}\oplus \mathcal{H}^{(c)}$ into joint reducing subspaces $\mathcal{H}^{(u)}$ and $\mathcal{H}^{(c)}$ of $N_0, N_1, N_2, N_3$ such that the following hold.
		\begin{enumerate}
			\item $(N_0|_{\HS^{(u)}}, N_1|_{\HS^{(u)}}, N_2|_{\HS^{(u)}}, N_3|_{\HS^{(u)}})$ is an $\Q$-unitary.
			\item  $(N_0|_{\HS^{(c)}}, N_1|_{\HS^{(c)}}, N_2|_{\HS^{(c)}}, N_3|_{\HS^{(c)}})$ is a completely non-unitary $\Q$-contraction.
		\end{enumerate}
		Moreover, $\mathcal{H}^{(u)}$ is the maximal closed joint reducing subspace of $N_0, N_1, N_2, N_3$ restricted to which $(N_0, N_1, N_2, N_3)$ is an $\Q$-unitary.
	\end{prop}
	
	\begin{proof}
		We have by Proposition \ref{prop2.3} that $(N_1, N_2, N_3)$ is an $\E$-contraction acting on $\HS$. Assume that $\mathcal{H}=\mathcal{H}_1\oplus \mathcal{H}_2$ is the canonical decomposition of the contraction $N_3$. A simple application of Theorem \ref{lem5.2} gives that $\mathcal{H}_1, \mathcal{H}_2$ are joint reducing subspaces for $N_0, N_1, N_2, N_3$. Suppose that 
		\[
		N_0=\begin{bmatrix}
			P_0 & 0 \\
			0 & Q_0
		\end{bmatrix}, \quad N_1=\begin{bmatrix}
			P_1 & 0 \\
			0 & Q_1
		\end{bmatrix}, \quad N_2=\begin{bmatrix}
			P_2 & 0 \\
			0 & Q_2
		\end{bmatrix} \quad \text{and} \quad N_3=\begin{bmatrix}
			P_3 & 0 \\
			0 & Q_3
		\end{bmatrix}
		\]
		with respect to $\mathcal{H}=\mathcal{H}_1\oplus \mathcal{H}_2$. Note that $P_3$ is a unitary and $Q_3$ is a completely non-unitary contraction. We have by Theorem \ref{thm5.3} that $(P_1, P_2, P_3)$ is an $\E$-unitary on $\mathcal{H}_1$. Let us define 
		\[
		\mathcal{H}^{(u)}= Ker(I-P_0^{*}P_0-P_1^*P_1)=\{x \in \HS_1 : P_0^*P_0x+P_1^*P_1x=x  \}.
		\] 
		We have by Fuglede's theorem that $(P_0, P_1, P_2, P_3)$ is a doubly commuting quadruple of operators. Consequently, $\HS^{(u)}$ is a reducing subspace for $P_j$ and so, for $N_j$ with $j=0, 1, 2, 3$. Let us define $\underline{U}=(U_0, U_1, U_2, U_3 )= (P_0|_{\HS^{(u)}}, P_1|_{\HS^{(u)}}, P_2|_{\HS^{(u)}}, P_3|_{\HS^{(u)}})$. Then $(P_1|_{\HS^{(u)}}, P_2|_{\HS^{(u)}}, P_3|_{\HS^{(u)}})$ is an $\E$-unitary and $U_0^*U_0x+U_1^*U_1x=P_0^*P_0x+P_1^*P_1x=x$ for all $x \in \HS^{(u)}$. By Theorem \ref{thm_Q_unitary}, $\underline{U}$ is an $\Q$-unitary. Let $\HS' \subseteq \HS$ be a joint reducing subspace for $N_0, N_1, N_2, N_3$ such that $N'=(N_0|_{\HS'}, N_1|_{\HS'}, N_2|_{\HS'}, N_3|_{\HS'})$ is an $\Q$-unitary. Let $N_j'=N_j|_{\HS'}$ for $0 \leq j \leq 3$. By Theorem \ref{thm_Q_unitary}, $(N_1, N_2, N_3)$ is an $\E$-unitary and so, $N_3$ is a unitary. Since $\HS_1$ is the maximal closed subspace of $\HS$ that reduces $N_3$ to unitary, we have that $\HS' \subseteq \HS_1$. Consequently, $N_j|_{\HS'}=P_j|_{\HS'}$ for $j=0, 1$. Since $(N_0, N_1, N_2, N_3)$ on $\mathcal{H}'$ is an $\Q$-unitary, we have by Theorem \ref{thm_Q_unitary} that $P_0^*P_0x+P_1^*P_1x=N_0^*N_0x+N_1^*N_1x=x$ for all $x \in \HS'$. Hence, $\HS' \subseteq \HS^{(u)}$ and so, $\mathcal{H}^{(u)}$ is the maximal closed joint reducing subspace for $N_0, N_1, N_2, N_3$ restricted to which $(N_0, N_1, N_2, N_3)$ is an $\Q$-unitary. Let $\mathcal{H}^{(c)}=\HS \ominus \HS^{(u)}$. The desired conclusion now follows from the maximality of $\HS^{(u)}$.
	\end{proof}
	
	We now present the main theorem of this section. For the sake of brevity, we denote by 
	\[
	\underline{T}^s=T_1^{s_1} \dotsc T_n^{s_n} \qquad \text{and}  \qquad \underline{T}^{*s}=(T_1^*)^{s_1} \dotsc (T_n^*)^{s_n}
	\]
	for  $s=(s_1, \dotsc, s_n) \in (\mathbb{N} \cup \{0\})^n$ and a commuting operator tuple $\underline{T}=(T_1, \dotsc, T_n)$. 
	\begin{thm}\textbf{(Canonical decomposition of an $\Q$-contraction).}\label{thm_can_Q}
		Let $(A, X_1, X_2, X_3)$ be an $\Q$-contraction acting on a Hilbert space $\HS$. Then there exists an orthogonal decomposition $\mathcal{H}=\mathcal{H}^{(u)}\oplus \mathcal{H}^{(c)}$ into joint reducing subspaces $\mathcal{H}^{(u)}, \mathcal{H}^{(c)}$ for $A, X_1, X_2, X_3$ such that 
		\begin{enumerate}
			\item $(A|_{\HS^{(u)}}, X_1|_{\HS^{(u)}}, X_2|_{\HS^{(u)}}, X_3|_{\HS^{(u)}})$ is an $\Q$-unitary and 
			\item $(A|_{\HS^{(c)}}, X_1|_{\HS^{(c)}}, X_2|_{\HS^{(c)}}, X_3|_{\HS^{(c)}})$ is a completely non-unitary $\Q$-contraction.
		\end{enumerate}
		 Moreover, $\mathcal{H}^{(u)}$ is the maximal closed joint reducing subspace for $A, X_1, X_2, X_3$ restricted to which $(A, X_1, X_2, X_3)$ is an $\Q$-unitary.
	\end{thm}
	
	\begin{proof}
		Let $\underline{T}=(A, X_1, X_2, X_3)$ be an $\Q$-contraction on a Hilbert space $\HS$. Define
		\[
		\mathcal{H}_0=\bigcap_{s \in \mathbb{N}^4}\bigcap_{t \in \mathbb{N}^4}Ker\left(\underline{T}^s\underline{T}^{*t}-\underline{T}^{*t}\underline{T}^{s}\right).
		\]
		We have by Corollary 4.2 in \cite{Eschmeier} that $\mathcal{H}_0$ is the largest joint reducing subspace of $A, X_1, X_2, X_3$ restricted to which $(A, X_1, X_2, X_3)$ is a commuting quadruple of normal operators. Let us define $\underline{N}=(N_0, N_1, N_2, N_3)=(A|_{\HS_0}, X_1|_{\HS_0}, X_2|_{\HS_0}, X_3A|_{\HS_0})$ on $\HS_0$.	Then $\underline{N}$ is a normal $\Q$-contraction. By Proposition \ref{normal}, there is a maximal closed joint reducing subspace $\HS^{(u)}$ for $N_0, N_1, N_2, N_3$ contained in $\HS_0$ such that $(N_0|_{\mathcal{H}^{(u)}}, N_1|_{\mathcal{H}^{(u)}}, N_2|_{\mathcal{H}^{(u)}}, N_3|_{\HS^{(u)}})$ is an $\mathbb{H}$-unitary. One can employ similar method as in Theorem \ref{normal} and prove that $\HS^{(u)}$ is the maximal closed joint reducing subspace of $A, X_1, X_3, X_4$ restricted to which $\underline{T}$ is an $\Q$-unitary. Let $\HS^{(c)}=\HS \ominus \HS^{(u)}$. The remaining part of the theorem follows from the maximality of $\HS^{(u)}$. 
	\end{proof}
	
	\section{Data Availability Statement}
	
	\noindent (1) Data sharing is not applicable to this article, because, as per our
	knowledge no datasets were generated or analysed during the current study.
	
	\smallskip
	
	\noindent (2) In case any datasets are generated and/or analysed during the current
	study which go unnoticed, they must be available from the corresponding author on
	reasonable request.

	\section{Declarations}
	
	\noindent \textbf{Ethical Approval.} This declaration is not applicable.
	
	\smallskip
	
	\noindent \textbf{Competing interests.} There are no competing interests.
	
	\smallskip
	
	\noindent \textbf{Authors' contributions.} All authors have contributed equally.
	
	\vspace{0.2cm}
	
	\noindent \textbf{Funding.} The first named author is supported by ``Core Research
	Grant'' of Science and Engineering Research Board (SERB), Govt. of India, with
	Grant No. CRG/2023/005223 and the ``Early Research Achiever Award Grant'' of IIT
	Bombay with Grant No. RI/0220-10001427-001. The second named author was supported by the Institute Postdoctoral Fellowship (IPDF) of IIT Bombay during the course of the paper.
	
	\vspace{0.1cm}

\end{document}